\newcommand{\bV}{\mathbf{V}}
\newcommand{\bS}{\mathbf{S}}
\newcommand{\bp}{\mathbf{p}}
\newcommand{\bF}{\mathbf{F}}
\newcommand{\bN}{\mathbf{N}}
\newcommand{\bK}{\mathbf{K}}
\newcommand{\bL}{\mathbf{L}}
\newcommand{\bZ}{\mathbf{Z}}
\newcommand{\rR}{\mathrm{R}}
\newcommand{\rD}{\mathrm{D}}
\newcommand{\fS}{\mathfrak{S}}
\newcommand{\fm}{\mathfrak{m}}
\newcommand{\cA}{\mathcal{A}}
\newcommand{\cB}{\mathcal{B}}
\newcommand{\cF}{\mathcal{F}}
\newcommand{\cT}{\mathcal{T}}
\renewcommand{\phi}{\varphi}
\renewcommand{\emptyset}{\varnothing}
\newcommand{\lw}{{\textstyle \bigwedge}}
\def\Ddots{\mathinner{\mkern1mu\raise\p@
\vbox{\kern7\p@\hbox{.}}\mkern2mu
\raise4\p@\hbox{.}\mkern2mu\raise7\p@\hbox{.}\mkern1mu}}
\DeclareMathOperator{\Div}{Div} 
\DeclareMathOperator{\coker}{coker}
\DeclareMathOperator{\cone}{Cone}
\DeclareMathOperator{\ext}{Ext}
\DeclareMathOperator{\reg}{reg}
\DeclareMathOperator{\Sym}{Sym}
\DeclareMathOperator{\Tor}{Tor}
\DeclareMathOperator{\len}{len}
\DeclareMathOperator{\Hom}{Hom}
\DeclareMathOperator{\Mod}{Mod}
\newcommand{\id}{\mathrm{id}}
\newcommand{\pol}{\mathrm{pol}}
\newcommand{\Pol}{\mathbf{Pol}}
\newcommand{\tors}{\mathrm{tors}}
\DeclareMathOperator{\FI}{FI}
\DeclareMathOperator{\VI}{VI}
\DeclareMathOperator{\Sh}{\bf{\Sigma}}
\DeclareMathOperator{\De}{\bf{\Delta}}
\DeclareMathOperator{\Rep}{Rep}
\DeclareMathOperator{\frob}{Frob}
\DeclareMathOperator{\Fec}{Vec}
\DeclareMathOperator{\Inc}{Inc}
\DeclareMathOperator{\init}{init}
\DeclareMathOperator{\colim}{colim}
\DeclareMathOperator{\fgen}{fg}
\DeclareMathOperator{\gen}{gen}
\DeclareMathOperator{\lf}{lf}
\DeclareMathOperator{\chark}{char}
\DeclareMathOperator{\kgdim}{KG.dim}
\newcommand{\GL}{\mathbf{GL}}
\numberwithin{equation}{section}
\newtheorem{theorem}[equation]{Theorem}
\newtheorem{proposition}[equation]{Proposition}
\newtheorem{lemma}[equation]{Lemma}
\newtheorem{corollary}[equation]{Corollary}
\theoremstyle{definition}
\newtheorem{rmk}[equation]{Remark}
\newenvironment{remark}[1][]{\begin{rmk}[#1] \pushQED{\qed}}{\popQED \end{rmk}}
\newtheorem{eg}[equation]{Example}
\newtheorem{defn}[equation]{Definition}
\renewcommand{\thesubsection}{%
  \ifnum\c@subsection<1 \@arabic\c@section
  \else \thesection.\@arabic\c@subsection
  \fi
}
\title[$\GL$-algebras in positive characteristic I]{$\GL$-algebras in positive characteristic I: The  exterior algebra}
\author{Karthik Ganapathy}
\address{Department of Mathematics, University of Michigan, Ann Arbor, MI}
\email{\href{mailto:karthg@umich.edu}{karthg@umich.edu}}
\thanks{}
\urladdr{\url{http://www-personal.umich.edu/~karthg/}}
\keywords{exterior algebra, infinite general linear group, Castelnuovo--Mumford regularity}
\subjclass[2020]{13A50,13C05 (Primary); 13D02,15A75,20G15 (Secondary)}
\date{}
\begin{document}
\begin{abstract}
We study the category of $\GL$-equivariant modules over the infinite exterior algebra in positive characteristic. Our main structural result is a shift theorem {\`a} la Nagpal. Using this, we obtain a Church--Ellenberg type bound for the Castelnuovo--Mumford regularity. We also prove finiteness results for local cohomology. 
\end{abstract}
\maketitle
\section{Introduction}\label{s:intro}
 Over the last decade, multiple authors have proved interesting asymptotic and uniformity results in fields ranging from topology \cite{cef15fi,bg18orbitconf, mw19higher} and number theory \cite{ps17finite, mnp20sta} to algebraic statistics \cite{dra10fin, bd11egb2f, de15abtree, mn21equhilb} and commutative algebra \cite{sno13delta,lau18syz,ess19gen}.
 A unifying feature of these seemingly unrelated works is the prevalence of highly symmetric infinite-dimensional objects.
 The class of \textit{$\GL$-algebras}, i.e., rings which admit an (appropriate) action of the infinite general linear group $\GL$ plays a central role in this endeavour. 
 Snowden \cite{sno20spe, sno21stable}, Sam--Snowden \cite{ss16gl, ss19gl2}, and other authors \cite{nss16deg2,bdes21geo} have extensively studied $\GL$-algebras over a field of characteristic zero;
 their work has already found a diverse array of applications. 
 
 On the contrary, $\GL$-algebras in positive characteristic have hardly been studied despite being intimately connected to other ubiquitous algebraic structures like twisted commutative algebras. 
 We fill this gap in the literature and initiate the systematic study of $\GL$-algebras in positive characteristic. In this paper, we study the infinite exterior algebra in detail; in the sequel \cite{ganglp}, we investigate the infinite polynomial ring. In positive characteristic, there are many complications in the polynomial ring (like Frobenius-twisted ideals) that disappear in the exterior algebra, making the latter easier to analyze. 

Throughout the paper, we fix $k$ an algebraically closed field of characteristic $p > 0$. Let $\GL = \bigcup \GL_n(k)$ be the infinite general linear group, and let $\bV = \bigcup k^n$ be the defining representation of $\GL$. Let $R$ be the exterior algebra of $\bV$ regarded as an algebra object in $\Rep^{\pol}(\GL)$, and similarly, let $S$ be the symmetric algebra of $\bV$. The category of $R$-modules (resp.~$S$-modules) in $\Rep^{\pol}(\GL)$ will be denoted by $\Mod_R$ (resp.~$\Mod_S$). In this paper, we provide a detailed description of $\Mod_R$. Over a field of characteristic zero, Sam--Snowden \cite{ss16gl} have studied the algebraic and homological properties of $\Mod_R$ and $\Mod_S$. We make many of their results characteristic independent. For instance, we prove:

\begin{theorem}\label{thm:finreg}
The Castelnuovo--Mumford regularity of a finitely generated $R$-module is finite.
\end{theorem}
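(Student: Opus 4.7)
The plan is to deduce Theorem~\ref{thm:finreg} from the shift theorem advertised in the abstract. Recall that the Castelnuovo--Mumford regularity of an $R$-module $M$ is controlled by the internal bigrading of $\operatorname{Tor}^R_\bullet(M,k)$, namely
\[
\operatorname{reg}(M) = \sup\bigl\{\, j - i \;:\; \operatorname{Tor}^R_i(M,k)_j \neq 0 \,\bigr\}.
\]
I would proceed in three steps: bound $\operatorname{reg}$ on the ``semi-induced'' building blocks produced by the shift theorem, deduce a bound for a sufficiently high shift $\Sigma^N M$ of $M$, and finally descend the bound to $M$ itself.

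\medskip

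\textbf{Step 1 (building blocks).} First bound $\operatorname{reg}$ on semi-induced modules, i.e., modules filtered by free $R$-modules of the form $R \otimes_k W$ for polynomial $\GL$-representations $W$. Since such $R \otimes_k W$ is $R$-free, $\operatorname{Tor}^R_{\ge 1}(R \otimes_k W, k) = 0$ and its regularity equals the top generation degree of $W$, which is finite. A standard long-exact-sequence argument propagates this bound through any finite filtration by such modules.

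\medskip

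\textbf{Step 2 (shifted module).} For an arbitrary finitely generated $M$, the shift theorem produces an integer $N \gg 0$ for which $\Sigma^N M$ admits a finite semi-induced description (either as an iterated extension or as the total object of a bounded complex of semi-induced modules). Combining this with Step~1 via the $\operatorname{Tor}$ long exact sequence (or the associated hypercohomology spectral sequence) yields a finite upper bound for $\operatorname{reg}(\Sigma^N M)$ in terms of $N$ and the generation/relation degrees of $M$.

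\medskip

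\textbf{Step 3 (descent, the main obstacle).} The remaining task is to transfer the bound from $\Sigma^N M$ back to $M$. The natural device is the short exact sequence $0 \to M \to \Sigma M \to \Delta M \to 0$ (or its analogue in the exterior setting), where the defect $\Delta M$ is finitely generated and torsion, hence concentrated in bounded degrees and of finite regularity, followed by induction on $N$. This step is where I expect the main difficulty. In positive characteristic the interplay between $\Sigma$ and the bar/Koszul complex computing $\operatorname{Tor}$ is not as clean as in characteristic zero: reducibility of Schur functors and failure of semisimplicity of $\operatorname{Rep}^{\operatorname{pol}}(\GL)$ preclude a direct change-of-rings computation. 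One therefore has to set up a spectral sequence comparing $\operatorname{Tor}^R(M,k)$ to $\operatorname{Tor}^R(\Sigma^N M, k)$ and verify, presumably using the local cohomology finiteness statements alluded to in the abstract, that only finitely many bidegrees contribute. Once this bookkeeping is in place, the theorem follows.
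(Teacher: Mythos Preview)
Your Step~3 contains a genuine error: the cokernel $\De(M) = \coker(M \to \Sh(M))$ is \emph{not} torsion. For instance, $\De(R \otimes V) \cong R \otimes \Sh(V)$ is induced (Lemma~\ref{lem:shiftofinduced}). What is true is that the \emph{kernel} of $i_M \colon M \to \Sh(M)$ is torsion (Proposition~\ref{prop:basicshift}(a)), but this does not give you the descent you need. So the induction on $N$ you sketch does not go through as written, and the vague appeal to local cohomology at the end does not repair it.

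The paper avoids this difficulty altogether. From the shift theorem one extracts a \emph{resolution} $0 \to M \to P^0 \to \cdots \to P^r \to 0$ by semi-induced modules whose cohomology is torsion (Theorem~\ref{thm:resthm}); this yields a distinguished triangle $T \to M \to F \to$ with $T$ a bounded complex of finite length modules and $F$ a bounded complex of semi-induced modules (Proposition~\ref{prop:triangle}). Finite regularity then reduces to the two classes of generators $L_\lambda$ and $R \otimes L_\lambda$ (Theorem~\ref{thm:gendbmodr}). The induced modules are flat, and for the simples one tensors the Koszul resolution of $k$ by $L_\lambda$ to get a flat resolution of $L_\lambda$ with regularity $|\lambda|$. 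No descent along $\Sh$ is needed.

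If you want to salvage your outline, the correct replacement for Step~3 is: first handle torsion modules directly via the Koszul complex (this is the step you are missing entirely), then for torsion-free $M$ use the exact sequence $0 \to M \to \Sh^N(M) \to \De_N(M) \to 0$ with $\Sh^N(M)$ semi-induced, note $t_0(\De_N(M)) \le t_0(M) - 1$ (Proposition~\ref{prop:basicshift}(j)), and induct on $t_0(M)$ rather than on $N$. This is essentially the content of Remark~\ref{rmk:cebounds}.
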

We emphasize that in characteristic zero, the analogue of Theorem~\ref{thm:finreg} is true for both $R$ and $S$. In characteristic $p$, this result fails for the polynomial ring $S$: the ideal generated by the $p$\textsuperscript{th} power of the variables is a $\GL$-stable ideal with infinite regularity. 

Theorem~\ref{thm:finreg} is almost a formal consequence of our main structural result (see Section~\ref{ss:schurder} for the definition of the Schur derivative):
\begin{theorem}[Shift Theorem]
Let $M$ be a finitely generated $R$-module and $\Sh$ be the Schur derivative. The $R$-module $\Sh^n(M)$ is flat for $n \gg 0$.
\end{theorem}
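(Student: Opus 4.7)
The plan is to follow the broad strategy of Nagpal's shift theorem for FI-modules, as adapted to $\GL$-algebras in characteristic zero by Sam--Snowden, while bypassing the Koszul-duality shortcut (which fails in characteristic $p$ due to the complications on the $S$-side that the author mentions). The essential structural input I would use is that $\Sh$ is an exact tensor functor on $\Rep^{\pol}(\GL)$ satisfying a Leibniz rule, and that $\Sh(R) \cong R$ up to an internal degree shift via the identity $\Lambda(W \oplus k) = \Lambda(W) \oplus \Lambda(W)$. In particular, $\Sh$ preserves the class of flat $R$-modules of the form $R \otimes V$ with $V$ a finite-dimensional polynomial representation, and it preserves finite generation.

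My first move would be to dispose of torsion. Let $\mathfrak{m} \subset R$ be the augmentation ideal and let $T(M) \subseteq M$ be the submodule of elements annihilated by a power of $\mathfrak{m}$. A finitely generated torsion module is supported in bounded polynomial degrees, and since $\Sh$ strictly decreases the polynomial degree of any positive-degree piece, one gets $\Sh^n(T(M)) = 0$ for $n \gg 0$. Applying $\Sh^n$ to the short exact sequence $0 \to T(M) \to M \to M/T(M) \to 0$ and using exactness of $\Sh$, the theorem for $M$ reduces to the theorem for the torsion-free quotient.

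The main induction is carried out on finitely generated torsion-free $M$, with respect to a suitable invariant — my first guess is a lexicographic pair (generation degree, first-syzygy degree), analogous to Nagpal's \emph{stable degree}. Choose a minimal surjection $\pi\colon R \otimes V \twoheadrightarrow M$ with $V$ a finite-dimensional polynomial representation, and let $K = \ker \pi$. The module $K$ is again finitely generated (using local Noetherianity of $\Mod_R$, presumably already established), and one needs to show it has strictly smaller invariant than $M$. By the induction hypothesis, $\Sh^n(K)$ is flat for $n \gg 0$; since $\Sh^n(R \otimes V)$ is also flat, $\Sh^n(M)$ fits into a short exact sequence $0 \to \Sh^n(K) \to \Sh^n(R \otimes V) \to \Sh^n(M) \to 0$ whose first two terms are flat, and one concludes flatness of the cokernel by a $\mathrm{Tor}$-vanishing argument.

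The principal obstacle will be precisely this last point. In characteristic $p$, the non-semisimplicity of $\Rep^{\pol}(\GL)$ means that extensions of flats need not split, so controlling the first syzygies of $M$ demands genuine understanding of how $\Sh$ acts on $\mathrm{Ext}^1_R$. Equivalently, the inductive invariant must encode enough information to ensure that, after sufficiently many shifts, the class in $\mathrm{Ext}^1_R(\Sh^n(M), \Sh^n(K))$ classifying the obstruction is zero. Identifying the correct analogue of Nagpal's stable degree for $\GL$-equivariant modules over the infinite exterior algebra in positive characteristic — together with the corresponding syzygy bound — is where I expect the bulk of the technical work to lie; in the FI setting this is the single most delicate step of Nagpal's argument, and there is no \emph{a priori} reason why our setting should be easier.
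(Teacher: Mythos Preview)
Your torsion reduction is fine and matches the paper. The core inductive scheme, however, has a genuine gap that you correctly flag but do not resolve: from a short exact sequence $0 \to \Sh^n(K) \to \Sh^n(R\otimes V) \to \Sh^n(M) \to 0$ with the first two terms flat (equivalently, semi-induced), you \emph{cannot} conclude that the third is flat. The two-out-of-three property for semi-induced modules goes the other way (sub and quotient imply middle; middle and quotient imply sub), and there is no reason to expect the extension class in $\Ext^1_R(\Sh^n(M),\Sh^n(K))$ to die after shifting. Your proposed inductive invariant does not buy you this, and it is unclear that any syzygy-style invariant would.

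The paper avoids this by running the argument in the opposite direction. Instead of presenting $M$ as a quotient of a flat module, it uses the natural map $i_M\colon M \to \Sh(M)$ (multiplication by the new variable), which is \emph{injective} when $M$ is torsion-free, and sets $\De(M)=\coker(i_M)$. The induction is on $t_0(M)$ alone: since $t_0(\De(M)) \le t_0(M)-1$, one gets $\De(\Sh^l M)\cong \Sh^l(\De(M))$ semi-induced for $l\gg 0$ by induction, and the remaining step is to lift semi-inducedness from $\De(M)$ back to $M$. That lifting (Proposition~\ref{prop:liftingsemi}) is the heart of the matter; it is \emph{not} a two-out-of-three argument but rather a bound $t_1(M)\le t_0(M)$ proved via a careful analysis of what happens when $\De$ vanishes. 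The key technical novelty, entirely absent from your sketch, is the classification of torsion-free modules with $\De(M)=0$: using the Steinberg tensor product theorem, the paper shows such modules are (extensions of) $R\otimes W^{(1)}$ with $W$ Frobenius-twisted, hence semi-induced. This is exactly the characteristic-$p$ replacement for the characteristic-zero fact $t_0(\De(M))=t_0(M)-1$, and it is what makes the argument go through.
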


The shift theorem is known for $R$-modules in characteristic zero by work of Nagpal \cite{nag15fi}. 

\begin{remark}
Nagpal's version of the shift theorem is for $\FI$-modules in all characteristics. An $\FI$-module is a functor from $\FI$, the category of finite sets with injections, to the category of $k$-vector spaces. In characteristic zero, the category of $\FI$-modules is equivalent to the category $\Mod_S$ (by Schur--Weyl duality) which is equivalent to the category $\Mod_R$ (using the ``transpose functor" \cite[Section~7]{ss12tca}). All these equivalences fail in positive characteristic.
\end{remark}

We now explain the key technical novelty of our paper. Our proof of the shift theorem is motivated by Li--Yu's simplification \cite{ly17fi} of Nagpal's work. Crucial to Li--Yu's proof is the fact that for an $\FI$-module $M$, the generation degree of $\De(M) = \coker(M \to \Sh(M))$ is exactly $t_0(M) - 1$. This property fails for $R$-modules: in Corollary~\ref{cor:dfrobzero}, we show that for a Frobenius twisted $\GL$-representation $W$, we have $\De(R \otimes W)= 0$. 
Our main idea is to classify the $R$-modules for which $\De$ vanishes. 
We achieve this in Section~\ref{ss:deltavanishing} by controlling the $\GL$-representations that occur in such modules using the Steinberg tensor product theorem. In fact, we show that if $\De$ vanishes for a torsion-free $R$-module, then the module is (up to extensions) of the form $R \otimes W$ with $W$ Frobenius twisted. We also use these ideas in our forthcoming paper on $\Mod_S$ \cite{ganglp}.

The theory of $\GL$-algebras is more difficult in positive characteristic, but not merely due to the lack of semisimplicity. Instead, as the previous paragraph shows, the presence of Frobenius twisted representations is the primary obstacle to extending many results from characteristic zero to positive characteristic. Furthermore, recent work \cite{bdes21geo, sno21stable} about $\GL$-algebras in characteristic zero heavily exploit the presence of weight vectors of weight $(1^n)$ in an arbitrary $\GL$-representation; all the weights occurring in a Frobenius twisted representation are divisible by $p$ so we cannot use these methods. The Frobenius powers of an ideal also cause significant pathology. We \cite{ganglp} show that in the infinite polynomial ring $S$, the Frobenius powers of the maximal ideal behave like prime ideals in a suitable sense. In characteristic zero, the appropriate spectrum of $S$ only has two points (the zero ideal and the maximal ideal); in positive characteristic, it has infinitely many points. We hope to address these issues more systematically in the future.

\begin{remark}\label{rmk:histshift}
A $\VI$-module is a functor from $\VI$, the category of finite dimensional vector spaces over $\bF_q$ with injections, to the category of vector spaces over a field $L$. Nagpal \cite{nag19vi} also proved a shift theorem for $\VI$-modules in non-describing characteristic; i.e., when $\chark(\bF_q) \ne \chark(L)$. The aforementioned property about the shift functor also fails for $\VI$-modules. As a workaround, Nagpal proved that in non-describing characteristic, a $\VI$-module is flat, in a suitable sense, if and only if its local cohomology modules vanish. We prove a similar equivalence for $R$-modules (Proposition~\ref{prop:semiindiffdersat}), but obtain it as a corollary of the shift theorem. It would be interesting to see if the strategy we employ here can be used to reprove Nagpal's theorem.
\end{remark}

\subsection{Additional results on \texorpdfstring{$R$}{R}-modules}\label{ss:addn}
Apart from the shift theorem, we extend a number of other results about the exterior algebra to all characteristics.
\begin{itemize}
    \item We prove $\Mod_R$ is locally noetherian using Gr{\"o}bner theory (Theorem~\ref{thm:noetherianity}). 
    \item We show that the Krull--Gabriel dimension of $\Mod_R$ is one (Proposition~\ref{prop:kgdimone}).
    \item We obtain finiteness results for the local cohomology functors $\rR\Gamma$; i.e., prove that for a finitely generated $R$-module $M$, the $R$-module $\rR^i\Gamma(M)$ is finite length for all $i$, and vanishes for sufficiently large $i$ (Theorem~\ref{thm:finitelc}).
    \item We show that the derived category $\rD^b_{\fgen}(\Mod_R)$ is generated by the simple $R$-modules $L_{\lambda}$ and the flat $R$-modules $R \otimes L_{\lambda}$ for arbitrary $\lambda$ (Theorem~\ref{thm:gendbmodr}).
    \item We explain how to make Theorem~\ref{thm:finreg} effective using work of Gan--Li \cite{gl20vi} (Remark~\ref{rmk:cebounds}). This mirrors a theorem of Church--Ellenberg \cite{ce17regularity} on $\FI$-modules. 
\end{itemize}

Not everything generalizes, however. In characteristic zero, Sam--Snowden \cite{ss16gl} proved that $\Mod_R^{\gen}$ and $\Mod_R^{\tors}$ are equivalent, and showed that every finitely generated $R$-module has finite injective dimension. Both these properties fail in our setting (Proposition~\ref{prop:gentorinequiv}), though remarkably the functor $\rR\Hom(M, -)$ still preserves the derived category $\rD^b_{\fgen}(\Mod_R)$ (Theorem~\ref{thm:finiteext}).

There are many more results known in characteristic zero; we are unaware if they continue to hold in positive characteristic. For example, Nagpal--Sam--Snowden \cite{nss18reg} showed that the regularity of an $R$-module can also be computed using the local cohomology functors. It would be interesting if one can prove this in positive characteristic as well (the analogous fact for $\bZ$-graded Gorenstein rings is classical). 

\subsection{Relation to other work}\label{ss:future}
This work fits in with the broader goal of equivariant commutative algebra wherein one studies $G$-equivariant modules over a $k$-algebra $A$.
We collect some of the relevant examples from the literature in Table~\ref{tab:otherwork}. Our paper suggests that the exterior algebra is a much more tractable object in positive characteristic compared to the polynomial ring. 

\subsubsection{$\GL$-algebras} 
Fix a positive integer $r$, and let $A = \lw(k^r \otimes \bV)$ and $B = \Sym(k^r \otimes\bV)$. In characteristic zero, it is easy to see that the two categories $\Mod_A$ and $\Mod_B$ are equivalent; Sam--Snowden \cite{ss19gl2} have studied the structural properties of $\Mod_B$. 
The results (and techniques) of our paper is strong evidence that in positive characteristic, at least $\Mod_A$ should parallel the characteristic zero story.

\begin{table}[b]\label{tab:otherwork}
    \begin{center}
    \begin{tabular}{|c|c|c|l|}
    \hline
    Group & Ring & $\chark(k)$ & Results  \\
    \hline
    $\GL$ & $\Sym(\bV)$ \& $\lw(\bV)$& $\chark 0$ & \cite{cef15fi,ss16gl,nag15fi}\\
    $\GL$ & $\Sym(\bV^{\oplus n})$ \& $\lw(\bV^{\oplus n})$& $\chark 0$ & \cite{ss19gl2, ram17fid} \\
    $\fS_\infty$ & $\Sym(\bV)$ & any $\chark$ & \cite{ns20symsub,ns21symide}    \\
    $\bS\bp$ & $\Sym(\bV)$ & $\chark 0$ & \cite{ss22sp}\\
    $\GL$ & $\Sym(\bV)$ & $\chark p > 0$ &  \cite{ganglp} \\
    $\GL$ & $\lw(\lw^2(\bV))$ \& $\lw(\Sym^2(\bV))$ & $\chark 0$  & \cite{nss19skew} \\
    $\GL$ & $\Sym(\Sym^2(\bV)$ \& $\Sym(\lw^2(\bV))$ & $\chark 0$ & \cite{nss16deg2} \\
    $\GL$ & $\Sym(\bV \oplus \lw^2(\bV))$ & $\chark 0$ & \cite{ss22sp} \\
    \hline
    \end{tabular}
    \end{center}
    \caption{Other examples from equivariant commutative algebra}
\end{table}
\subsubsection{Invariant theory} Gandini \cite{gan18res, gan21deg} has studied equivariant resolutions over exterior algebras in characteristic zero by transferring well-established results about the polynomial ring to the exterior algebra using the transpose functor (see \cite[Section~7.4]{ss12tca}). In positive characteristic, the transpose functor does not exist, and moreover these results usually fail for the polynomial ring. It would be interesting to see whether one can directly prove analogues of Gandini's results in positive characteristic.

\subsubsection{The infinite symmetric group}
Nagpal--Snowden \cite{ns20symsub,ns21symide} have classified the $\fS_\infty$-stable ideals in the infinite polynomial ring. There are many remarkable results and open problems \cite{lnnr20pdim, lnnr21reg, murai20betti} about these ideals and their resolutions. The exterior algebra should be a good testing ground for these problems, being somewhat less complicated structurally. 
\subsubsection{Twisted commutative algebras}
A twisted commutative algebra (tca) is a commutative algebra object in $\Rep(\fS_{\star})$, the category of sequences of representations of $\fS_n$. We have a symmetric monoidal functor $\cF \colon \Rep^\pol(\GL) \to \Rep(\fS_\star)$ often referred to as the Schur functor. Given an object $W$ in $\Rep^\pol(\GL)$, the functor $\cF$ induces a functor $\cF_W \colon \Mod_{\Sym(W)} \to \Mod_{\Sym(\cF(W))}$. In characteristic zero, the functor $\cF$ is an equivalence, so the theory of tca's and $\GL$-algebras in characteristic zero agree. In positive characteristic, the Schur functor is not an equivalence. However, we are hopeful that the theory of $\GL$-algebras in positive characteristic will enable us to prove new results about tca's in positive characteristic.

\subsubsection{Categories with shift-like functors} 
Gan--Li \cite{gl19shift} have described an inductive machinery to prove results (like Noetherianity and finite regularity) about a module category equipped with a shift-like functor (see Subsection~1.9 of loc.~cit.). In their work, they assume $t_0(\De(M)) = t_0(M) - 1$ so it does not apply in our case. Nevertheless, developing an inductive machinery without this assumption on $\De(M)$ deserves serious consideration as more and more shift functors do not satisfy this condition (see also Remark~\ref{rmk:histshift}). 

\subsubsection{The infinite polynomial ring} 
As we remarked earlier, the infinite polynomial ring $S$ is more complicated in positive characteristic. In a forthcoming paper \cite{ganglp}, we provide a fairly comprehensive picture of $\Mod_S$. The structural results of this paper will be used to prove finiteness properties for resolutions of $S$-modules.

\subsection{Outline}
In Section~\ref{s:bg}, we recall the necessary background. In particular, we recall the notion of a polynomial representation in Section~\ref{ss:polrep}, and define $\GL$-algebras and related concepts in Section~\ref{ss:GLalg}. The Schur derivative and some of its properties are discussed in Section~\ref{ss:schurder}. We prove the noetherianity result (Theorem~\ref{thm:noetherianity}) and facts about torsion $R$-modules in Section~\ref{s:rmod}. Section~\ref{s:shift} contains the proof of the shift theorem with the key technical results appearing in Section~\ref{ss:deltavanishing}. In Section~\ref{s:fgrmod}, we use the shift theorem to obtain structural results about the derived category $\rD^b_{\fgen}(\Mod_R)$.

For the expert, we suggest first reading Section~\ref{ss:schurder}, Proposition~\ref{prop:flatequalssemi}, and then directly jumping to Section~\ref{s:shift}. Most of the other results in Section~\ref{s:bg} and Section~\ref{s:rmod} are as one would expect.
\subsection{Notation:}
\begin{description}[align=right,labelwidth=2.5cm,leftmargin=!]
\item[$k$] the base field, always algebraically closed of characteristic $p > 0$
\item[$\bV$] a fixed infinite dimensional $k$-vector space with basis $\{e_i\}_{i \ge 1}$
\item[$\GL$] the group of automorphisms of $\bV$ fixing all but finitely many of the basis vectors $e_i$
\item[$\Fec$] the category of $k$-vector spaces
\item[$\Rep^\pol(\GL)$] the category of polynomial representations of $\GL$
\item[$\Pol$] the category of strict polynomial functors $\Fec \to \Fec$
\item[$R$] the exterior algebra of $\bV$
\item[$L_{\lambda}$] the irreducible polynomial representation of $\GL$ with highest weight $\lambda$
\item[$\Sh$] the Schur derivative functor, which we also call the shift functor
\item[$\De$] the difference functor
\item[$-^{<n}$] the submodule generated by all elements of degree less than $n$
\item[$-^{(r)}$] the $r$-th Frobenius twist of a $\GL$-representation
\item[$t_i(-)$] the generation degree of $\Tor_i^R(-, k)$
\end{description}

\subsection*{Acknowledgements} I am grateful to Andrew Snowden for being generous with his ideas and providing numerous comments on a draft which greatly improved the exposition. Thanks are also due to Rohit Nagpal for pointing out helpful references, and Nate Harman for helpful discussions. The work was supported in part by NSF grants DMS \#1453893 and DMS \#1840234.

\section{Background}\label{s:bg}
\subsection{Polynomial representations}\label{ss:polrep}
We let $\GL = \bigcup_{n\geq 1} \GL_n(k)$, and $\bV = \bigcup_{n \geq 1} k^n$ be the standard representation of $\GL$. A representation of $\GL$ is \textit{polynomial} if it appears as a subquotient of a (possibly infinite) direct sum of tensor powers of $\bV$. We denote the category of all polynomial representations by $\Rep^\pol(\GL)$. It is a Grothendieck abelian category closed under tensor products whose objects are locally finite length. For a quick review of polynomial representations, we refer the interested reader to \cite[Section~1]{tot97gl}. 
\subsubsection*{Examples}The irreducible polynomial representations of $\GL$ are indexed by partitions of arbitrary size. For a partition $\lambda$, we let $L_\lambda$ be the irreducible representation with highest weight vector of weight $\lambda$, and we let $\bS_\lambda(\bV)$ be the Schur module corresponding to $\lambda$. The representation $L_\lambda$ is the socle of $\bS_\lambda(\bV)$. We note that $\bS_{(d)}(\bV) = \Sym^d(\bV)$, and $\bS_{(1^d)}(\bV) = \lw^d(\bV)$. Furthermore, we let $\Div^d(\bV)$ be the $d$-th divided power of $\bV$. For a partition $\lambda = (\lambda_1, \lambda_2, \ldots, \lambda_n)$, we let $\Div^{\lambda}(\bV) = \Div^{\lambda_1}(\bV) \otimes \Div^{\lambda_2}(\bV) \otimes \ldots \Div^{\lambda_n}(\bV)$, and define $\Sym^{\lambda}(\bV)$ similarly. The modules $\Div^{\lambda}(\bV)$ are projective objects in $\Rep^{\pol}(\GL)$, and any polynomial representation is a quotient of direct sums of $\Div^{\lambda}(\bV)$ (as $\lambda$ varies). Similarly, the representations $\Sym^{\lambda}(\bV)$ are injective objects in $\Rep^{\pol}(\GL)$.
\subsubsection*{Polynomial functors}Let $\Pol$ denote the category of strict polynomial endofunctors of $\Fec$ in the sense of Friedlander--Suslin \cite{fs97coh}. The category $\Rep^\pol(\GL)$ is equivalent to $\Pol$ \cite[Lemma~3.4]{fs97coh}. The equivalence is given by evaluating a polynomial functor $F$ on $\bV$. We let $\Phi \colon \Rep^\pol(\GL) \to \Pol$ denote the inverse. Under this equivalence, the Schur functor $\bS_\lambda$ in $\Pol$ is mapped to the polynomial representation $\bS_\lambda(\bV)$, and if $W$ is a polynomial representation of $\GL$, then $W \cong \Phi(W)(\bV)$. Every concept we define for polynomial representations can be transferred to $\Pol$ using this equivalence.

\subsubsection*{Frobenius twist} Let $\frob \colon \GL \to \GL$ denote the Frobenius map, which raises each entry of a matrix to its $p$-th power. The Frobenius map is a group homomorphism. Given a representation $W$ of $\GL$, the \textit{Frobenius twist} of $W$, denoted $W^{(1)}$, is the $\GL$-representation obtained by pulling $W$ back along this map. If $W$ is a polynomial representation, then $W^{(1)}$ is also a polynomial representation. For $r > 1$, we recursively define $W^{(r)} = (W^{(r-1)})^{(1)}$.

\subsubsection*{Grading}
A \textit{homogeneous} polynomial representation of \textit{degree $d$} is a polynomial representation of $\GL$ which has a (possibly infinite) filtration where the successive quotients are the simple objects $L_{\lambda}$ with $|\lambda| = d$. 
The degree is also compatible with the tensor product and Frobenius twist: if $V$ is homogeneous of degree $d$ and $W$ is homogeneous of degree $e$, then $V \otimes W$ is homogeneous of degree $d + e$, and $V^{(1)}$ is homogeneous of degree $pd$. 
Given an arbitrary polynomial representation $W$ of $\GL$, we can decompose it as $\bigoplus_{i \in\bN} W_i$ where $W_d$ is homogeneous of degree $d$ \cite[Lemma~2.5]{fs97coh}. Therefore, every polynomial representation has a canonical $\bN$-grading. We let $\Rep^\pol(\GL)_d$ be the full subcategory of $\Rep^\pol(\GL)$ on the homogeneous polynomial representation of degree $d$.

\subsubsection*{Steinberg tensor product theorem}A partition $\lambda = (\lambda_1, \lambda_2, \ldots, \lambda_n)$ is \textit{$p$-restricted} if $\lambda_i - \lambda_{i+1} < p$ for all $1 \leq i \leq n$ (here $\lambda_{n+1} = 0$). We state an important result that relates $p$-restricted partitions, Frobenius twists, tensor products, and irreducible representations, see e.g., \cite[Theorem~1.1]{kuj06stpt}.
\begin{theorem}\label{thm:steinberg}
Let $\lambda$ be a partition and write $\lambda = \sum_{i=0}^n p^i \lambda^i$ for $p$-restricted partitions $\lambda^0, \lambda^1, \ldots, \lambda^n$. Then, we have an isomorphism $L_\lambda \cong L_{\lambda^0} \otimes L_{\lambda^1}^{(1)} \otimes L_{\lambda^2}^{(2)} \ldots \otimes L_{\lambda^n}^{(n)}$.
\end{theorem}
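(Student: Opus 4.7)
My approach is to reduce to finite $\GL_N(k)$ with $N$ large enough that every $\lambda^i$ has at most $N$ parts; this is legitimate because a polynomial representation of $\GL$ is determined by its restriction to $\GL_N$ for sufficiently large $N$. I would then induct on the length $n+1$ of the $p$-adic expansion. Since Frobenius twist is a symmetric monoidal functor on $\Rep^{\pol}(\GL)$, the inductive hypothesis collapses all but the first factor on the right-hand side and reduces the theorem to the two-factor version: given a decomposition $\lambda = \lambda^0 + p\mu$ with $\lambda^0$ a $p$-restricted partition, show $L_\lambda \cong L_{\lambda^0} \otimes L_\mu^{(1)}$.

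To identify this tensor product as $L_\lambda$, I would first check that $\lambda$ is a weight of multiplicity one which dominates every other weight. Any weight of $L_{\lambda^0}$ has the form $\lambda^0 - \sum c_i \alpha_i$ with $c_i \geq 0$, and any weight of $L_\mu^{(1)}$ has the form $p(\mu - \sum d_i \alpha_i)$ with $d_i \geq 0$; their sum equals $\lambda$ precisely when all $c_i$ and $d_i$ vanish, so $\lambda$ occurs once and is maximal. It then suffices to prove $L_{\lambda^0} \otimes L_\mu^{(1)}$ is irreducible.

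For irreducibility I would restrict the action to the first Frobenius kernel $G_1 \subseteq \GL_N$. Because the differential of the Frobenius morphism vanishes, $L_\mu^{(1)}$ is trivial as a $G_1$-module, whereas $L_{\lambda^0}$ remains irreducible over $G_1$ precisely because $\lambda^0$ is $p$-restricted. Therefore $L_{\lambda^0} \otimes L_\mu^{(1)}$ is $G_1$-isotypic with simple summands $L_{\lambda^0}$, and every $G_1$-submodule must have the form $L_{\lambda^0} \otimes W$ for some subspace $W \subseteq L_\mu^{(1)}$. Any $G$-submodule is a fortiori $G_1$-stable and hence of this form; unwinding $G$-equivariance (picking a basis of $L_{\lambda^0}$ and reading off coefficients) forces $W$ to be a $G$-subrepresentation of $L_\mu^{(1)}$. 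Since $L_\mu$ is irreducible, so is $L_\mu^{(1)}$, and hence $W$ is $0$ or everything.

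The main obstacle is the infinitesimal input: the facts that $G_1$ acts trivially on any Frobenius twist and that $L_{\lambda^0}$ stays irreducible over $G_1$ when $\lambda^0$ is $p$-restricted are the key nontrivial ingredients from modular representation theory. Once these are admitted, the argument above is essentially a repackaging of Steinberg's original proof via infinitesimal group schemes into the polynomial representation language used in the paper.
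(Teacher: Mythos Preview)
The paper does not supply a proof of this theorem; it is stated as background with a citation to the literature. Your sketch is the classical argument for Steinberg's tensor product theorem via the first Frobenius kernel, and it is correct: the reduction to $\GL_N$ through the equivalence with strict polynomial functors is legitimate, the weight count showing $\lambda$ occurs once and is dominant is routine, and your irreducibility step is the standard one. The point that every $G_1$-submodule of the isotypic module $L_{\lambda^0}\otimes L_\mu^{(1)}$ has the form $L_{\lambda^0}\otimes W$ uses $\End_{G_1}(L_{\lambda^0})=k$, and the deduction that $W$ is $G$-stable is exactly the ``read off coefficients in a basis'' computation you indicate (equivalently, normality of $G_1$ in $G$). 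The two external facts you flag---that $G_1$ acts trivially on any Frobenius twist, and that a $p$-restricted simple stays simple over $G_1$ (Curtis's theorem)---are indeed the substantive inputs, and are precisely what the cited references supply.
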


\subsection{\texorpdfstring{$\GL$}{GL}-algebras}\label{ss:GLalg}
A \textit{$\GL$-algebra} is a commutative, associative, unital $k$-algebra equipped with an action of $\GL$ via algebra homomorphisms, under which it forms a polynomial representation. A \textit{skew $\GL$-algebra} is a skew-commutative associative, unital $\bZ/2$-graded $k$-algebra equipped with an action of $\GL$ via algebra homomorphisms under which it forms a polynomial representation.
We write $|A|$, when we want to forget the $\GL$-action on $A$ and think of it as an ordinary $k$-algebra. The $\GL$-action endows $|A|$ with a canonical $\bN$-grading. In a skew $\GL$-algebra, this canonical $\bN$-grading is unrelated to the $\bZ/2$-grading. Given a polynomial representation $W$, the algebra $\Sym(W)$ is a $\GL$-algebra and $\lw(W)$ is canonically a skew $\GL$-algebra.

\subsubsection*{\texorpdfstring{$A$}{A}-modules}Let $A$ be a (skew) $\GL$-algebra. We let $\Mod_A$ denote the category of $A$-modules in $\Rep^\pol(\GL)$. Concretely, an \textit{$A$-module} $M$ is an $|A|$-module, equipped with an action of $\GL$ that is compatible with the action on $A$ (i.e., $g(am) = g(a)g(m)$ for all $a \in A, m \in M$ and $g \in \GL$) such that $M$ is a polynomial representation. The action of $\GL$ endows $M$ with an $\bN$-grading, making $M$ an $\bN$-graded $|A|$-module. A \textit{$\GL$-ideal} of $A$ is a $\GL$-stable ideal of $|A|$. A \textit{submodule} of an $A$-module is a $\GL$-stable $|A|$-submodule. We denote by $A_{+}$ the ideal of positive degree elements of $A$. 

\subsubsection*{Finite generation}A $\GL$-algebra $A$ is \textit{finitely generated} if there exists a finite length subrepresentation $V \subset A$ such that the canonical map $\Sym(V) \to A$ is surjective. 
A skew $\GL$-algebra $A$ is \textit{finitely generated} if there exists finite length subrepresentations $V \subset A_{\overline{0}}$ and $W \subset A_{\overline{1}}$ such that the canonical map $\Sym(V) \otimes \lw(W) \to A$ is surjective.
Equivalently, the $k$-algebra $|A|$ is generated by the $\GL$ orbits of finitely many elements. An $A$-module $M$ is \textit{finitely generated} if there exists a finite length subrepresentation $V \subset M$ such that the canonical map $A \otimes V \to M$ is surjective. Equivalently, we can find finitely many elements in $M$ such that their $\GL$-orbits generate $|M|$ as an $|A|$-module.

Let $M$ be an $A$-module. The \textit{generation degree} of $M$ is the smallest integer $n \geq -1$ such that $M$ is generated by elements of degree $\leq n$ (or $\infty$ if no such integer exists). We say $M$ is \textit{generated in degree $n$} if $M$ is generated by its degree $n$ elements (in particular, we have $M_i = 0$ for $i < n$). 

\subsubsection*{Noetherianity}A (skew) $\GL$-algebra is \textit{noetherian} if every submodule of a finitely generated $A$-module is also finitely generated. It is \textit{weakly noetherian} if every $\GL$-ideal of $A$ is finitely generated.

\subsubsection*{Nakayama's Lemma} We have the following version of Nakayama's lemma. The usual proof of the (graded version of the) lemma applies.
\begin{lemma}\label{lem:nakayama}
    Let $A$ be a $\GL$-algebra and $M$ be an $A$-module. Let $V \to M$ be a map of $\GL$-representations such that the composition $V \to M \to M/A_{+}M$ is surjective. The map $A \otimes V \to M$, obtained by adjunction, is also surjective.
\end{lemma}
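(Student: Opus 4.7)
My plan is to run the standard graded Nakayama argument, with the key point being that every polynomial representation comes with a canonical $\bN$-grading (as recalled in Section~\ref{ss:polrep}) and this grading is compatible with the $\GL$-algebra and $\GL$-module structures. Concretely, the multiplication map $A \otimes M \to M$ is $\GL$-equivariant, hence respects the $\bN$-grading coming from $\GL$; similarly, the map $V \to M$ is a map of polynomial representations, so it respects degrees.

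The first step is to reduce to proving that a certain cokernel vanishes. Let $\varphi \colon A \otimes V \to M$ be the adjoint map and let $N = \coker(\varphi)$, which is naturally an $A$-module in $\Rep^\pol(\GL)$, hence $\bN$-graded. The hypothesis that $V \to M/A_+ M$ is surjective immediately implies that the induced map $V \to N/A_+ N$ is zero, and since this map is surjective as well (images generate cokernels here), we conclude $N = A_+ N$.

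The second step is to show that any $\bN$-graded $A$-module $N$ with $N = A_+ N$ must vanish; this is a straightforward induction on degree. For the base case, note that $(A_+ N)_0 = \sum_{i > 0} A_i \cdot N_{-i} = 0$ since polynomial representations are concentrated in nonnegative degrees, so $N_0 = 0$. For the inductive step, if $N_m = 0$ for all $m < n$, then $(A_+ N)_n = \sum_{i > 0} A_i \cdot N_{n-i} = 0$, so $N_n = 0$. Hence $N = 0$ and $\varphi$ is surjective.

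There is no substantive obstacle: the only thing to verify carefully is that the grading on $N$ and the compatibility of multiplication with the grading are inherited from the $\GL$-structure, which is automatic from \cite[Lemma~2.5]{fs97coh} and the observation that $A_+$ is the ideal of positive-degree elements in this canonical grading.
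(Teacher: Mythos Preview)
Your proof is correct and is exactly what the paper has in mind: the paper's entire proof reads ``The usual proof of the (graded version) of the lemma applies,'' and you have spelled out precisely that standard graded Nakayama argument using the canonical $\bN$-grading on polynomial representations.
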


\subsubsection*{Projective and injective modules}For a projective polynomial representation $V$, the $A$-module $A \otimes V$ is projective. Since there are enough (finite length) projectives in $\Rep^\pol(\GL)$, the category $\Mod_A$ has enough (finitely generated) projectives. In fact, every $A$-module is a quotient of direct sums of the projective modules $\{A \otimes \Div^{\lambda}(\bV)\}_{\lambda}$. The category $\Mod_A$ has enough injectives as well since it is a Grothendieck abelian category (though the injectives may not be finitely generated).

\subsection{The Schur derivative}\label{ss:schurder} 
In this subsection, we define an endofunctor of $\Rep^\pol(\GL)$ called the \textit{Schur derivative}. We first define it for $\Pol$ and then define it for $\Rep^\pol(\GL)$ using the equivalence $\Phi$. 
Some properties of the Schur derivative are outlined in \cite[Section~6.4]{ss12tca} (under char $0$ assumptions). The Schur derivative and related notions also appear in \cite[Section~3]{bdde21univ} under the name ``linear approximation".

\subsubsection*{Schur derivative of polynomial functors}Let $F \colon \Fec \to \Fec$ be a polynomial functor. We define the Schur derivative of $F$, denoted $\Sh(F)$, to be the polynomial functor which on objects is defined by
\begin{displaymath}
\Sh(F)(V) = F(k \oplus V)^{[1]},
\end{displaymath}
where the superscript here denotes the subspace of $F(k \oplus V)$ on which $k^\star$ acts with weight~$1$. For a linear map $f \colon V \to W$, the induced map $\Sh(F)(f) \colon \Sh(F)(V) \to \Sh(F)(W)$ is given by restricting the map $F(\id_k \oplus f) \colon F(k \oplus V) \to F(k \oplus W)$ to the $1$ weight-space of the $k^\star$ action. It is easy to see that $\Sh(F)$ is a polynomial functor, and that the assignment $F \to \Sh(F)$ is functorial. 

\subsubsection*{Basic properties of the Schur derivative}The Schur derivative is an exact functor (because $k^\star$ is semisimple). It preserves finite length objects of $\Pol$. Furthermore, the Schur derivative is a categorification of the ordinary derivative (of polynomials) as it satisfies:
\begin{itemize}
    \item Additivity: $\Sh(F \oplus G) = \Sh(F) \oplus \Sh(G)$
    \item Leibniz rule: $\Sh(F\otimes G) = (\Sh(F) \otimes G) \oplus (F \otimes \Sh(G))$
    \item Chain rule: $\Sh(F\circ G) = (\Sh(F)\circ G) \otimes \Sh(G)$ 
    \item Kills ``$p$-th powers": $\Sh(F^{(1)}) = 0$
\end{itemize}
For the second property, we use the fact that for $k^\star$ representations $V$ and $W$, we have $(V \otimes W)^{[1]} = (V^{[1]} \otimes W) \oplus (V \otimes W^{[1]})$, and for the last property, we use the fact that all the weights occurring in a Frobenius twisted representation are divisible by $p$.

\subsubsection*{Schur derivative of a polynomial representation}Since $\Pol$ and $\Rep^\pol(\GL)$ are equivalent, we obtain an endofunctor of $\Rep^\pol(\GL)$, which we again denote by $\Sh$ and call the Schur derivative. For a polynomial representation $W$, we identify $\Sh(W)$ as a subspace of $\Phi(W)(k \oplus \bV)$, where $k^\star$ acts with weight $1$. We also identify $W$ is as a subspace of $\Phi(W)(k \oplus \bV)$ (where $k^\star$ acts trivially). The newly introduced basis vector of $k \oplus \bV$ will usually be denoted by $f_1$, or $y_1$ depending on the situation. The proof of the next lemma is easy, but demonstrates how we will work with the Schur derivative in the rest of the paper.
\begin{lemma}\label{lem:shiftofsym}
    For all $n \geq 1$, we have isomorphisms $\Sh(\Sym^n(\bV)) \cong \Sym^{n-1}(\bV)$, and $\Sh(\lw^n(\bV)) \cong \lw^{n-1}(\bV)$.
\end{lemma}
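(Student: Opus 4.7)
The plan is a direct computation from the definition $\Sh(F)(V) = F(k \oplus V)^{[1]}$, keeping track of the $k^\star$-action that scales the summand $k$ and fixes $V$.

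For the symmetric case, I would start from the natural, $k^\star \times \GL(V)$-equivariant decomposition
\begin{displaymath}
\Sym^n(k \oplus V) \;\cong\; \bigoplus_{i=0}^{n} \Sym^{i}(k) \otimes \Sym^{n-i}(V).
\end{displaymath}
Since $k^\star$ acts on $\Sym^{i}(k)$ with weight $i$, extracting the weight-$1$ subspace isolates the summand with $i = 1$, and I get $\Sh(\Sym^n)(V) \cong \Sym^1(k) \otimes \Sym^{n-1}(V) \cong \Sym^{n-1}(V)$ naturally in $V$. Applying the equivalence $\Phi^{-1}$ and evaluating at $\bV$ yields the desired isomorphism in $\Rep^\pol(\GL)$.

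For the exterior case, I would use the analogous natural decomposition
\begin{displaymath}
\lw^n(k \oplus V) \;\cong\; \bigoplus_{i+j = n} \lw^{i}(k) \otimes \lw^{j}(V).
\end{displaymath}
Because $\lw^i(k) = 0$ for $i \geq 2$, only the terms with $i = 0$ (weight $0$) and $i = 1$ (weight $1$) survive, so $\lw^n(k \oplus V) \cong \lw^n(V) \oplus \lw^{n-1}(V)$ as $k^\star$-representations. Reading off the weight-$1$ summand gives $\Sh(\lw^n(\bV)) \cong \lw^{n-1}(\bV)$.

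There is no genuine obstacle here; the only mild point is ensuring that the two decompositions above are natural in $V$, so that they produce isomorphisms of polynomial functors (and hence of $\GL$-representations) rather than isomorphisms of vector spaces only. This is immediate from the coalgebra structures on $\Sym$ and $\lw$ that produce these splittings. As a sanity check, one may reprove the same formulas inductively from the Leibniz rule $\Sh(F \otimes G) \cong \Sh(F) \otimes G \oplus F \otimes \Sh(G)$ together with the base case $\Sh(\bV) \cong k$, which itself follows from $(k \oplus V)^{[1]} = k$.
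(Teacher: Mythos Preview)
Your proposal is correct and is essentially the same computation as the paper's: both identify the weight-$1$ piece of $\Sym^n(k\oplus\bV)$ (resp.\ $\lw^n(k\oplus\bV)$) directly, the paper via the explicit basis map $m\mapsto y_1 m$ and you via the standard direct-sum decomposition of $\Sym^n$ (resp.\ $\lw^n$). The only difference is packaging---yours is phrased functorially while the paper writes out the monomial map---and your remark on naturality makes the equivariance step explicit where the paper leaves it as ``easily checked.''
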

\begin{proof}
The vector space $\Sym^n(\bV)$ has basis indexed by degree $n$ monomials in $x_1, x_2, \ldots$. We shall identify $\Sh(\Sym^n(\bV))$ as a subspace of $\Sym^n(k \oplus \bV)$, where the newly introduced basis vector will be $y_1$.

We define a map $\Sym^{n-1}(\bV) \to \Sh(\Sym^n)(\bV) = \Sym^n(k \oplus \bV)^{[1]}$,
where
\begin{displaymath}
x_1^{i_1}x_2^{i_2}\ldots x_m^{i_m} \mapsto y_1 x_1^{i_1}x_2^{i_2}\ldots x_m^{i_m}.
\end{displaymath}
This is $\GL(\bV)$-equivariant ($\GL(\bV)$ acts trivially on $y_1$), and easily checked to be both injective and surjective. The proof for $\lw^n(\bV)$ is similar.
\end{proof}

\subsubsection*{Schur derivative of an $A$-module} Let $A$ be a $\GL$-algebra, and $M$ be an $A$-module. The $\GL$-representation $\Sh(M)$ is canonically an $A$-module, as we now explain. Firstly, we have a canonical $\GL$-equivariant map $A \to \Phi(A)(k \oplus \bV)$ induced by the canonical inclusion $\bV \to k \oplus \bV$. Now, the $\GL$-representation $\Phi(M)(k \oplus \bV)$ is a $\GL(k \oplus \bV)$-equivariant module over $\Phi(A)(k \oplus \bV)$. By restriction of scalars, it is a $\GL$-equivariant module over $A$. The subspace $\Sh(M)$ of $\Phi(M)(k \oplus V)$ is stable under the action of $\GL$ as well as $|A|$, so $\Sh(M)$ is an $A$-module. To summarize, the Schur derivative induces a functor $\Sh \colon \Mod_A \to \Mod_A$. For example, given a polynomial representation $V$, we have $\Sh(A \otimes V) = (A \otimes \Sh(V)) \oplus (\Sh(A) \otimes V)$.

\subsection{Semi-induced modules}\label{ss:semiinduced}
Throughout this subsection, we fix a (skew) $\GL$-algebra $A$ such that $A_0 = k$. 

An $A$-module is \textit{induced} if it is isomorphic to $A \otimes W$ for some polynomial representation $W$. An $A$-module is \textit{semi-induced} if it has a finite filtration where the successive quotients are induced modules. The terminology is motivated by similarly defined objects for $\FI$-modules (and related algebraic structures) \cite{nag15fi}.

Given an $A$-module $M$, we let $\Tor_i^A(M, -)$ denote the left derived functors of the right exact functor $M \otimes_A -$. An $A$-module $M$ is \textit{flat} if the functor $M \otimes_A -$ is exact. An $A$-module $M$ is flat if and only if the functors $\Tor_i^A(M, N) = 0$ for all $i > 0$ and all $A$-modules $N$. Finally, it is easy to see that the $|A|$-modules $|\Tor_i^A(M, N)|$ and $\Tor_i^{|A|}(|M|, |N|)$ are isomorphic.

The main result of this subsection is a characterization of semi-induced modules using $\Tor$.
\begin{proposition}\label{prop:flatequalssemi}
    Let $M$ be a finitely generated $A$-module. The following are equivalent:
    \begin{enumerate}[label=(\alph*)]
        \item $M$ is semi-induced,
        \item $M$ is flat,
        \item $\Tor_i^A(M, k) = 0$ for all $i > 0$, and
        \item $\Tor_1^A(M, k) = 0$.
    \end{enumerate}
\end{proposition}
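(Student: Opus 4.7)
The plan is to go around the cycle $(a) \Rightarrow (b) \Rightarrow (c) \Rightarrow (d) \Rightarrow (a)$. The implication $(a) \Rightarrow (b)$ is easy: every induced module $A \otimes W$ is free as an $|A|$-module upon choosing a $k$-basis of $W$, hence flat, and flatness is closed under extensions. Both $(b) \Rightarrow (c)$ and $(c) \Rightarrow (d)$ are immediate from the definition of $\Tor$.

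All the content is in $(d) \Rightarrow (a)$, which I would prove by induction on $d = t_0(M)$. The engine is a \emph{single-degree claim}: if $M$ is nonzero, generated in a single degree $e$ (that is, $M_i = 0$ for $i < e$ and $M = A \cdot M_e$), and $\Tor_1^A(M, k) = 0$, then $M \cong A \otimes M_e$. To see this, take the canonical surjection $A \otimes M_e \twoheadrightarrow M$ with kernel $K$; the map is an isomorphism in degree $e$, so $K$ lives in degrees strictly greater than $e$. The $\Tor$ long exact sequence then reads
\[
0 = \Tor_1^A(M, k) \to K/A_+ K \to M_e \to M/A_+M \to 0.
\]
Since $M$ starts in degree $e$, we have $(A_+M)_e = 0$, so $M_e \hookrightarrow M/A_+M$, forcing $K/A_+K = 0$. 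Lemma~\ref{lem:nakayama} applied with $V = 0$ then yields $K = 0$.

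For the inductive step at generation degree $d$, I would consider the submodule $M^{<d} \subseteq M$ and the short exact sequence $0 \to M^{<d} \to M \to M/M^{<d} \to 0$. A short degreewise verification (using $A_0 = k$) gives two facts: the quotient $M/M^{<d}$ is generated in the single degree $d$, and the natural map $M^{<d}/A_+M^{<d} \to M/A_+M$ is injective, picking out exactly the part of $M/A_+M$ in degrees less than $d$. Combined with $\Tor_1^A(M, k) = 0$, the $\Tor$ long exact sequence of the short exact sequence gives $\Tor_1^A(M/M^{<d}, k) = 0$; the single-degree claim then identifies $M/M^{<d}$ as induced, hence flat, so $\Tor_2^A(M/M^{<d}, k) = 0$, and the same $\Tor$ sequence now delivers $\Tor_1^A(M^{<d}, k) = 0$. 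Since $t_0(M^{<d}) < d$, the inductive hypothesis makes $M^{<d}$ semi-induced, and concatenating filtrations gives a finite induced filtration on $M$.

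I expect the main obstacle to be the degreewise bookkeeping behind the injection $M^{<d}/A_+M^{<d} \hookrightarrow M/A_+M$: verifying that $M^{<d}$ captures all of $M/A_+M$ in degrees below $d$ while contributing nothing in degrees $\geq d$. This is precisely where the hypothesis $A_0 = k$ is used in earnest, as it forces $M^{<d}$ to agree with $M$ in every degree below $d$.
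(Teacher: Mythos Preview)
Your proof is correct and follows essentially the same route as the paper: the cycle $(a)\Rightarrow(b)\Rightarrow(c)\Rightarrow(d)\Rightarrow(a)$, with the substantive step $(d)\Rightarrow(a)$ done by induction on $t_0(M)$ via the filtration $M^{<d}\subset M$, showing the top quotient is induced and then descending. The only cosmetic difference is that where you explicitly verify the injectivity of $M^{<d}/A_+M^{<d}\to M/A_+M$ to conclude $\Tor_1^A(M/M^{<d},k)=0$, the paper instead reads off from the same long exact sequence that $t_1(M/M^{<d})<d$ and then invokes the single-degree lemma in the slightly more general form ``$t_1(M)\le n$'' rather than ``$\Tor_1^A(M,k)=0$''; both arguments are equivalent once one notes $t_0(M^{<d})<d$.
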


We let $t_i(M) = \max\deg \Tor_i^A(M, k)$ as a polynomial representation with the convention that $\max\deg(0) = -1$. By Nakayama's lemma (Lemma~\ref{lem:nakayama}), the module $M$ is generated in degrees $\le n$ if and only if $t_0(M) \le n$. We denote by $M^{< n}$ the $A$-submodule of $M$ generated by all elements of degree $< n$. If $t_0(M) = n$, the quotient $M/M^{<n}$ will be generated in degree $n$; we implicitly use this throughout the paper.

\begin{lemma}\label{lem:semiequalsflat}
   Let $M$ be a semi-induced $A$-module and $N$ be any $A$-module. Then $\Tor^A_i(M, N) = 0$ for $i > 0$. 
\end{lemma}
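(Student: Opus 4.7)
The plan is to first reduce to the induced case, and then show flatness of induced modules directly. Let me outline.

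Since $M$ is semi-induced, it admits a finite filtration $0 = M^0 \subset M^1 \subset \cdots \subset M^r = M$ where each successive quotient $M^{j}/M^{j-1}$ is induced, i.e., isomorphic to $A \otimes W_j$ for some polynomial representation $W_j$. I would proceed by induction on $r$. The inductive step uses the long exact sequence of $\Tor_*^A(-, N)$ applied to the short exact sequence $0 \to M^{r-1} \to M \to A \otimes W_r \to 0$: if both $M^{r-1}$ and $A \otimes W_r$ have vanishing higher Tor against any module $N$, then so does $M$ (the excerpt notes that $\Tor$ in $\Mod_A$ agrees with the ordinary $\Tor$ in $\Mod_{|A|}$, so the long exact sequence is available). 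Thus everything reduces to showing that an induced module $A \otimes W$ is flat.

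For the base case, I would use the natural isomorphism of functors $(A \otimes W) \otimes_A - \cong W \otimes_k -$ on $\Mod_A$. Concretely, for any $A$-module $N$ there is a canonical $\GL$-equivariant isomorphism $(A \otimes W) \otimes_A N \cong W \otimes_k N$ given by $(a \otimes w) \otimes n \mapsto w \otimes an$, compatible with the $\GL$-action because the maps are built from the structural operations of $A$ and $N$. Since $k$ is a field, $W$ is flat as a $k$-module, so $W \otimes_k -$ is exact. Consequently $A \otimes W$ is flat as an $A$-module, and $\Tor_i^A(A \otimes W, N) = 0$ for all $i > 0$ and all $N$.

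The main (minor) obstacle is purely bookkeeping: making sure the identification $(A \otimes W) \otimes_A N \cong W \otimes_k N$ is genuinely an isomorphism of $\GL$-equivariant objects, and that the long exact sequence of $\Tor$ makes sense in $\Mod_A$ rather than in $\Mod_{|A|}$. Both are handled by the remark, recorded earlier in Section~\ref{ss:semiinduced}, that $|\Tor_i^A(M,N)| \cong \Tor_i^{|A|}(|M|,|N|)$, so we may compute Tor after forgetting the $\GL$-action and then use that the result canonically inherits a polynomial $\GL$-structure. With these points in place, the induction closes and the lemma follows.
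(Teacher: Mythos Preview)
Your proof is correct and follows essentially the same approach as the paper: show that induced modules $A \otimes W$ are flat via the natural isomorphism $(A \otimes W) \otimes_A - \cong W \otimes_k -$, and then pass to semi-induced modules by d\'evissage (which the paper states in one line, while you spell out the induction on the filtration length explicitly).
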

\begin{proof}
The induced module $A \otimes V$ is flat since the functor $ (A \otimes V) \otimes_A - $ is isomorphic to $V \otimes_k -$ which is exact. By d\'evissage, we see that semi-induced modules are also flat.
\end{proof}

\begin{lemma}\label{lem:relationsinlowdeg}
    Let $M$ be an $A$-module generated in degree $n$ such that $t_1(M) \leq n$. The natural map $A \otimes M_n \to M$ is an isomorphism.
\end{lemma}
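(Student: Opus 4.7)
The plan is to set $K := \ker(\varphi)$, where $\varphi \colon A \otimes M_n \to M$ is the canonical map obtained from the inclusion $M_n \hookrightarrow M$ (which is surjective because $M$ is generated in degree $n$), and to show that the hypothesis $t_1(M) \le n$ forces $K = 0$. The mechanism will be the Tor long exact sequence obtained by applying $- \otimes_A k$ to $0 \to K \to A \otimes M_n \to M \to 0$.

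First, I would use that $A \otimes M_n$ is induced, hence flat by Lemma~\ref{lem:semiequalsflat}, so that the higher Tor of $A \otimes M_n$ against $k$ vanish. What remains of the long exact sequence is
\begin{equation*}
0 \to \Tor_1^A(M, k) \to K/A_+K \to M_n \to M/A_+M \to 0.
\end{equation*}
Next, because $M$ is generated in degree $n$, I expect $A_+ M$ to lie in degrees strictly greater than $n$, so the rightmost map $M_n \to M/A_+M$ is an isomorphism; consequently $\Tor_1^A(M,k) \cong K/A_+K$. Finally, since $K$ is a subrepresentation of $A \otimes M_n$ (which vanishes below degree $n$) and $\varphi$ is the identity in degree $n$, the module $K$ is concentrated in degrees $\ge n+1$, and the same holds for its quotient $K/A_+K$. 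The hypothesis $t_1(M) \le n$ then forces this quotient to vanish.

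To upgrade $K/A_+K = 0$ to $K = 0$, I will apply Nakayama's lemma (Lemma~\ref{lem:nakayama}) to the zero map $0 \to K$. The whole argument is essentially routine homological bookkeeping with no substantial obstacle; the only step that deserves attention is the grading check placing $K$ in degrees $\ge n+1$, since that is precisely what lets the bound $t_1(M) \le n$ do its work.
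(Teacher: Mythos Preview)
Your proposal is correct and follows essentially the same route as the paper: both set $K=\ker(A\otimes M_n\to M)$, use flatness of $A\otimes M_n$ (Lemma~\ref{lem:semiequalsflat}) to extract the exact sequence identifying $\Tor_1^A(M,k)$ with $K/A_+K=\Tor_0^A(K,k)$, and then conclude $K=0$ from the degree bound together with Nakayama. The only cosmetic difference is that you spell out the Nakayama step explicitly, whereas the paper phrases the same conclusion as ``$t_0(K)\le n$ while $K_i=0$ for $i\le n$.''
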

\begin{proof}
The map $A \otimes M_n \to M$ is surjective since $M$ is generated in degree $n$. The kernel $K$ of this map must be supported in degrees $> n$. By Lemma~\ref{lem:semiequalsflat}, the module $\Tor_1^A(A\otimes M_n, k) = 0$. Therefore, the long exact sequence of $\Tor_{\bullet}^A(-, k)$ gives us the exact sequence
\begin{displaymath}
0 \to \Tor_1^A(M, k) \to \Tor_0^A(K, k) \to \Tor_0^A(A \otimes M_n, k) \to \Tor_0^A(M, k).
\end{displaymath}
The rightmost map is an isomorphism, so the first map is also an isomorphism. Now, since $t_1(M) \leq n$, we have $t_0(K) \le n$ which implies $K$ is zero since $K_i = 0$ for $i \le n$. Therefore the map $A \otimes M_n \to M$ is an isomorphism, as required.
\end{proof}

\begin{lemma}\label{lem:subofsemi}
     Let $M$ be a finitely generated $A$-module such that $\Tor_1^A(M,k) = 0$ and $t_0(M) = n$. The module $M/M^{<n}$ is induced, and $\Tor_1^A(M^{< n},k )= 0$. 
\end{lemma}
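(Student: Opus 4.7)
The plan is to apply the long exact sequence of $\Tor^A_\bullet(-,k)$ to the short exact sequence
\[
0 \to M^{<n} \to M \to M/M^{<n} \to 0,
\]
and chase degrees in low $\Tor$'s. The relevant piece of the long exact sequence is
\[
\Tor_2^A(M/M^{<n},k) \to \Tor_1^A(M^{<n},k) \to \Tor_1^A(M,k) \to \Tor_1^A(M/M^{<n},k) \to \Tor_0^A(M^{<n},k) \to \Tor_0^A(M,k).
\]
The strategy is first to prove that $M/M^{<n}$ is induced, which by Lemma~\ref{lem:relationsinlowdeg} amounts to showing it has no first-order relations; then part (b) will follow immediately from Lemma~\ref{lem:semiequalsflat} applied to $M/M^{<n}$.

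The key computation is the degree tracking of $\Tor_0^A(M^{<n},k) = M^{<n}/A_+M^{<n}$. Since $M^{<n}$ is generated by $V := \bigoplus_{j<n} M_j$, we have $(M^{<n})_i = M_i$ for $i < n$, and for $i \geq n$ every element of $(M^{<n})_i$ lies in $A_+ \cdot V \subseteq A_+ M^{<n}$, so $\Tor_0^A(M^{<n},k)$ is concentrated in degrees $<n$. In those degrees, I would compare $(A_+M^{<n})_i = \sum_{j<i}A_{i-j} (M^{<n})_j$ with $(A_+M)_i = \sum_{j<i} A_{i-j} M_j$: when $i<n$ the constraint $j<n$ is automatic from $j<i$, so $(A_+M^{<n})_i = (A_+M)_i$. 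Consequently, the natural map $\Tor_0^A(M^{<n},k) \to \Tor_0^A(M,k)$ is injective (it is the identity on each graded piece in degrees $<n$, and the source vanishes in higher degrees). This step, though elementary, is the only place requiring any real care — one has to make sure no relation in $M^{<n}$ is "hidden" by the passage to $M$.

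Combining this injectivity with the hypothesis $\Tor_1^A(M,k)=0$, exactness of the long sequence forces $\Tor_1^A(M/M^{<n},k)=0$. Since $M/M^{<n}$ is generated in degree $n$, Lemma~\ref{lem:relationsinlowdeg} applied with that value of $n$ yields $M/M^{<n} \cong A\otimes (M/M^{<n})_n$, proving that $M/M^{<n}$ is induced. This gives (a).

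For (b), the module $M/M^{<n}$ is now known to be induced and hence flat by Lemma~\ref{lem:semiequalsflat}, so $\Tor_2^A(M/M^{<n},k)=0$. Feeding this into the displayed long exact sequence along with $\Tor_1^A(M,k)=0$ immediately yields $\Tor_1^A(M^{<n},k)=0$, as required. The entire argument is a clean diagram chase once the degree-tracking in the main step is in place, and I do not anticipate further obstacles.
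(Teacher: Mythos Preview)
Your proof is correct and follows essentially the same approach as the paper: apply the long exact sequence of $\Tor$ to $0 \to M^{<n} \to M \to M/M^{<n} \to 0$, deduce that $M/M^{<n}$ satisfies the hypothesis of Lemma~\ref{lem:relationsinlowdeg}, and then read off $\Tor_1^A(M^{<n},k)=0$ from the vanishing of $\Tor_2^A(M/M^{<n},k)$. The only minor difference is that you prove the stronger statement $\Tor_1^A(M/M^{<n},k)=0$ via the injectivity of $\Tor_0^A(M^{<n},k)\to\Tor_0^A(M,k)$, whereas the paper simply observes $t_0(M^{<n})<n$ to conclude $t_1(M/M^{<n})<n$, which is already enough for Lemma~\ref{lem:relationsinlowdeg}; your extra step is harmless but not needed.
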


\begin{proof}
    The module $M/M^{<n}$ is generated in degree $n$. We have the long exact sequence
    \begin{displaymath} 
    \Tor_2^A(M/M^{<n}, k) \to \Tor_1^A(M^{<n},k) \to \Tor_1^A(M, k) \to \Tor_1^A(M/M^{<n},k) \to \Tor_0^A(M^{<n},k),
    \end{displaymath}
    from which we see that $t_1(M/M^{<n}) < n$. So by Lemma~\ref{lem:relationsinlowdeg}, the module $M/M^{<n}$ is an induced module. In particular, the module $\Tor_2^A(M/M^{<n}, k) = 0$ by Lemma~\ref{lem:semiequalsflat}, which in turn implies that $\Tor_1^A(M^{<n}, k)$ vanishes (from the long exact sequence above), as required.
\end{proof}

\begin{proof}[Proof of Proposition~\ref{prop:flatequalssemi}]
   (a) implies (b) is Lemma~\ref{lem:semiequalsflat}. (b) implies (c), and (c) implies (d) is trivial. We now prove (d) implies (a). Using lemma~\ref{lem:subofsemi}, we see that the module $M/M^{<t_0(M)}$ is induced, and $\Tor_1^A(M^{<t_0(M)}, k) = 0$. So by induction on generation degree, we have that $M^{<t_0(M)}$ is semi-induced, and therefore $M$ is also semi-induced, as required.
\end{proof}

\begin{corollary}\label{cor:semises}
Let $0 \to M_1 \to M_2 \to M_3 \to 0$ be a short exact sequence of finitely generated $A$-modules.
\begin{enumerate}[label=(\alph*)]
    \item If $M_1$ and $M_3$ are semi-induced, then so is $M_2$.
    \item If $M_2$ and $M_3$ are semi-induced, then so is $M_1$.
\end{enumerate}
\end{corollary}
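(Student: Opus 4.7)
The plan is to reduce both statements to the characterization given by Proposition~\ref{prop:flatequalssemi}: among finitely generated $A$-modules, being semi-induced is equivalent to the vanishing of $\Tor_1^A(-, k)$, and equivalent to the vanishing of $\Tor_i^A(-, k)$ for all $i > 0$. We then extract the desired vanishing from the long exact sequence of $\Tor^A_{\bullet}(-,k)$ associated to the short exact sequence $0 \to M_1 \to M_2 \to M_3 \to 0$.

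For part (a), assume $M_1$ and $M_3$ are semi-induced. Note first that $M_2$ is finitely generated since $M_1$ and $M_3$ are (any lift of generators of $M_3$ together with generators of $M_1$ generates $M_2$). The relevant portion of the long exact sequence is
\begin{displaymath}
\Tor_1^A(M_1,k) \to \Tor_1^A(M_2,k) \to \Tor_1^A(M_3,k),
\end{displaymath}
and by Proposition~\ref{prop:flatequalssemi} the outer terms vanish. Hence $\Tor_1^A(M_2,k) = 0$, and another application of the proposition shows $M_2$ is semi-induced.

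For part (b), assume $M_2$ and $M_3$ are semi-induced. The module $M_1$ is finitely generated by hypothesis. The relevant portion of the long exact sequence is
\begin{displaymath}
\Tor_2^A(M_3,k) \to \Tor_1^A(M_1,k) \to \Tor_1^A(M_2,k),
\end{displaymath}
and the outer terms vanish by Proposition~\ref{prop:flatequalssemi} (using the equivalence of (a) and (c) there for the $\Tor_2$ term). So $\Tor_1^A(M_1,k) = 0$, and $M_1$ is semi-induced.

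There is no serious obstacle here; the whole point of Proposition~\ref{prop:flatequalssemi} is to make this kind of two-out-of-three statement a routine diagram chase. The only mild subtlety is in part (b), where one needs the stronger vanishing $\Tor_2^A(M_3,k)=0$ (not just $\Tor_1$), which is why the equivalence of conditions (c) and (d) in Proposition~\ref{prop:flatequalssemi} is essential and not merely a reformulation.
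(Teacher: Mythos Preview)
Your proof is correct and follows essentially the same approach as the paper: both use the long exact sequence of $\Tor$ together with Proposition~\ref{prop:flatequalssemi}. The paper phrases it via the flatness characterization (condition (b)) while you use the $\Tor^A_\bullet(-,k)$ characterizations (conditions (c) and (d)), but this is the same argument; note also that finite generation of all three modules is already part of the hypothesis, so your remarks about it are unnecessary.
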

\begin{proof}
The analogous results for flat modules can be obtained using the long exact sequence of $\Tor$. We obtain the corollary for semi-induced modules using Proposition~\ref{prop:flatequalssemi}.
\end{proof}

\subsection{Serre subcategories and Serre quotients}\label{ss:serre}
We now review Serre quotients; see \cite{gab62serre} or \cite[Chapter~15]{fa12cat} for general background. 

A \textit{Serre subcategory} $\cB$ of an abelian category $\cA$ is a full abelian subcategory of $\cA$ that is closed under extensions, and taking subquotients. Given a Serre subcategory $\cB \subset \cA$, we let $T \colon \cA \to \cA/\cB$ denote the exact functor to the Serre quotient $\cA/\cB$ and $S \colon \cA/\cB \to \cA$ denote the right adjoint to $T$ (the section functor), if it exists. 

If $\cA$ and $\cB$ are Grothendieck abelian categories, then the section functor exists (see e.g., \cite[Theorem~15.11]{fa12cat}). Furthermore, under these assumptions, we also have a functor $\Gamma \colon \cA \to \cB$ which takes an object $M$ to the maximal subobject $\Gamma(M)$ of $M$ contained in $\cB$. The functor $\Gamma$ is right adjoint to the inclusion functor $i\colon \cB \to \cA$, so left exact; its right derived functors are the local cohomology functors.

All the results we use about local cohomology will be from \cite[Section~4]{ss19gl2}. Many of the results from loc.~cit.~are stated under the assumption that the $\cB$ satisfies Property (Inj), i.e., injective objects in $\cB$ remain injective in $\cA$. We will verify this in cases of interest.

\section{Basic results on \texorpdfstring{$R$}{R}-modules}\label{s:rmod}
For the rest of the paper, we let $R = \lw(\bV)$ be the exterior algebra. This is a skew $\GL$-algebra. We let $\fm$ denote the ideal of all positive degree elements of $R$. In degree $i$, the $\GL$-representation $R_i$ is the $i$-th exterior power $\lw^i(\bV)$ which is irreducible of highest weight $(1^i)$. The irreducibility of $R_i$ implies that the only nonzero proper $\GL$-ideals of $R$ are the ideals $\fm^r$ for $r \ge 1$. Clearly, the ascending chain condition holds for $\GL$-ideals of $R$, i.e., the skew $\GL$-algebra $R$ is weakly noetherian. In Section~\ref{ss:noetherianity}, we show that $R$ is noetherian. In Section~\ref{ss:torsion}, we define the notion of a torsion $R$-module (which makes sense only in the infinite variable setting).

\subsection{Noetherianity}\label{ss:noetherianity} 
In this subsection, we show $\Mod_R$ is locally noetherian. We will prove a stronger statement that given a finitely generated $\Inc(\bN)$-equivariant $R$-module $M$, every $\Inc(\bN)$-stable $R$-submodule of $M$ is also finitely generated. Our proof uses the following theorem of Cohen \cite{co67laws} (see also \cite[Corollary~6.16]{nr19fioi}):
\begin{theorem}\label{thm:cohen}
Let $M$ be a finitely generated $\Inc(\bN)$-equivariant module over the infinite polynomial ring $S = k[x_1, x_2, \ldots, ]$. Every $\Inc(\bN)$-stable $S$-submodule of $M$ is finitely generated. 
\end{theorem}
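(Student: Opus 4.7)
My plan is to prove the theorem by equivariant Gr\"obner theory, with Higman's lemma on well-quasi-orders as the key combinatorial input. The first step is to reduce from modules to a free case: since $M$ is finitely generated, it is a quotient of a finitely generated free $\Inc(\bN)$-equivariant $S$-module $F = \bigoplus_{j=1}^n S \cdot e_j$ (where $\Inc(\bN)$ acts trivially on the $e_j$), and every $\Inc(\bN)$-stable submodule of $M$ lifts to an $\Inc(\bN)$-stable submodule of $F$. So it suffices to show every $\Inc(\bN)$-stable submodule of $F$ is finitely generated.

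Next, I would introduce a monomial order on $F$ adapted to $\Inc(\bN)$---for instance, the lex order with $x_1 \prec x_2 \prec \cdots$ combined with a fixed order on the $e_j$. One then checks that $\mathrm{in}(\sigma f) = \sigma \cdot \mathrm{in}(f)$ for all $\sigma \in \Inc(\bN)$ and $f \in F$, so the initial submodule $\mathrm{in}(N)$ of any $\Inc(\bN)$-stable submodule $N \subset F$ is again $\Inc(\bN)$-stable, and is moreover monomial. By the standard Gr\"obner lifting argument, if $\mathrm{in}(N)$ is finitely generated as an $\Inc(\bN)$-equivariant submodule, then $N$ itself is. This reduces the theorem to the purely combinatorial statement that every $\Inc(\bN)$-stable monomial submodule of $F$ is finitely generated.

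The combinatorial core is as follows. A monomial of $F$ has the form $x_{i_1}^{a_1} \cdots x_{i_k}^{a_k} \, e_j$ with $i_1 < \cdots < i_k$ and $a_\ell \geq 1$, which I would encode as a pair $(j, w)$ where $w = (a_1, \ldots, a_k)$ is a finite word over the alphabet $\bN_{>0}$. Under this encoding, $x^{a'} e_{j'}$ is divisible by some $\Inc(\bN)$-image of $x^{a} e_j$ iff $j = j'$ and $w$ embeds into $w'$ in the subword order: there is an increasing injection $\phi$ with $a_\ell \leq a'_{\phi(\ell)}$ for every $\ell$. Higman's lemma states that this subword order is a well-quasi-order on finite words in $(\bN_{>0}, \leq)$, and I would apply it in each of the $n$ components separately. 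Well-quasi-orderedness forces every upward-closed subset to have only finitely many minimal elements, which translates back to finite $\Inc(\bN)$-equivariant generation of the monomial submodule.

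The step I expect to require the most care is the combinatorial encoding itself. The infinite-variable setting blocks a naive Dickson-type argument on exponent vectors, because exponent vectors have unbounded length; the passage to variable-length words, and the verification that the $\Inc(\bN)$-orbit structure on monomials is exactly captured by the subword order on these words, is where Higman's lemma earns its keep. Once this is in place, the rest of the proof is mechanical, and the extension from ideals to modules (handled by the free reduction and component-wise application of the well-quasi-order) is routine.
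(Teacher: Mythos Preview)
The paper does not prove this theorem; it is quoted as a known result of Cohen (with a pointer also to Nagel--R\"omer) and used as a black box in the proof that $\Mod_R$ is locally noetherian. So there is no proof in the paper to compare against. Your overall strategy---reduce to a free module, pass to initial monomial submodules via an $\Inc(\bN)$-compatible Gr\"obner order, and then invoke Higman's lemma---is the standard modern route to this result and is correct in outline.

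There is, however, a genuine gap in the combinatorial encoding you propose. Recording $x_{i_1}^{a_1}\cdots x_{i_k}^{a_k}$ by the exponent word $(a_1,\dots,a_k)$ alone discards the positions $i_1,\dots,i_k$, and the claimed equivalence between $\Inc(\bN)$-divisibility and the subword order on these words is false. Take $m = x_1^2 x_3^2$ and $m' = x_1^2 x_2^2$: both map to the word $(2,2)$, so your order declares each below the other, yet no $\sigma\in\Inc(\bN)$ satisfies $\sigma(m)\mid m'$, since every such $\sigma$ has $\sigma(3)\ge 3$. The standard fix is to record the gaps as well: encode the monomial as its full exponent sequence $(e_1,\dots,e_N)\in\bN_{\ge 0}^{N}$, where $N$ is the largest occurring index and $e_j$ is the exponent of $x_j$ (possibly zero). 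One then checks directly that $\sigma(m)\mid m'$ for some $\sigma\in\Inc(\bN)$ if and only if this sequence Higman-embeds into the corresponding sequence for $m'$, and Higman's lemma over the well-quasi-ordered alphabet $(\bN_{\ge 0},\le)$ finishes the argument. With this correction your proof goes through.
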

For the rest of the subsection (except the proof of Theorem~\ref{thm:noetherianity}), we forget the action of $\GL$ on $R$ and $S$.

\subsubsection*{Notation} Let $\Inc(\bN)$ denote the \textit{increasing monoid}, i.e., the monoid of increasing injective functions from $\bN$ to itself. Let $B$ denote the $k$-algebra $S/(x_1^2, x_2^2, \ldots, x_n^2, \ldots)$. We let $P_n$ denote the $R$-module $R \otimes \bV^{\otimes n}$, and let $Q_n$ denote the $B$-module $B \otimes \bV^{\otimes n}$ for all $n \geq 0$.

\subsubsection*{Monomial submodules}An element of $P_n$ is \textit{monomial} if it is of the form 
\begin{displaymath}
m = (x_{i_1}\wedge x_{i_2} \ldots \wedge x_{i_r}) \otimes (e_{j_1} \otimes e_{j_2} \ldots \otimes e_{j_n})
\end{displaymath} with $i_1 > i_2 > \ldots > i_r$.
We also have a similar notion of monomials for the $B$-module $Q_n$.

A submodule $M$ of the $R$-module $P_n$ (resp.~the $B$-module $Q_n$) is \textit{monomial} if it is generated by all the monomials it contains.

\subsubsection*{Bijection between monomial submodules of $P_n$ and $Q_n$} 
We define a map $\Psi$ from the set of monomials of $P_n$ and the set of monomials of $Q_n$.
Given a monomial 
\begin{displaymath}
m = (x_{i_1}\wedge x_{i_2} \ldots \wedge x_{i_r}) \otimes (e_{j_1} \otimes e_{j_2} \ldots \otimes e_{j_n}) \in P_n
\end{displaymath}
we define
\begin{displaymath}
\Psi(m) = (x_{i_1}x_{i_2} \ldots x_{i_r}) \otimes (e_{j_1} \otimes e_{j_2} \ldots \otimes e_{j_n}) \in Q_n.
\end{displaymath}
Clearly, the map $\Psi$ is a bijection. 
The map $\Psi$ also induces a bijection $\widetilde{\Psi}$ between the set of monomial submodules of $P_n$ and $Q_n$ in the following way: 
given a monomial submodule $M$ of $P_n$, the submodule $\widetilde{\Psi}(M) \subset Q_n$ is generated by the monomials $\Psi(m)$ with $m$ varying over all the monomials in $M$. 

\subsubsection*{Action of the increasing monoid} We define an action of $\Inc(\bN)$ on $P_n$. If $\sigma \in \Inc(\bN)$ and 
\begin{displaymath}
m = (x_{i_1} \wedge x_{i_2} \ldots \wedge x_{i_p}) \otimes (e_{j_1} \otimes e_{j_2} \otimes \ldots \otimes e_{j_n}) \in P_n,
\end{displaymath}
we define
\begin{displaymath}
\sigma m := (x_{\sigma(i_1)} \wedge x_{\sigma(i_2)} \ldots \wedge x_{\sigma(i_p)}) \otimes (e_{\sigma(j_1)} \otimes e_{\sigma(j_2)} \otimes \ldots \otimes e_{\sigma(j_n)})
\end{displaymath}
and linearly extend this to all of $P_n$. We similarly define an action of $\Inc(\bN)$ on $Q_n$ as well. 

The action we have just defined endows $P_n$ and $Q_n$ with the structure of an $\Inc(\bN)$-equivariant module over $R$ and $B$ respectively. 
Furthermore, the map $\widetilde{\Psi}$ restricts to an \textit{inclusion-preserving} bijection:
\begin{displaymath}
\xymatrix{
\{ \text{$\Inc(\bN)$-stable monomial submodules of $P_n$} \} \ar[r]^{\widetilde{\Psi}} & \{ \text{$\Inc(\bN)$-stable monomial submodules of $Q_n$} \} }
\end{displaymath}

\subsubsection*{Total order on monomials of $P_n$} Fix an $n \in \bN$. We endow a total ordering (denoted $\le$) on the set of monomials of $P_n$. Given monomials $m = (x_{i_1}\wedge x_{i_2} \ldots \wedge x_{i_r}) \otimes (e_{j_1} \otimes e_{j_2} \ldots \otimes e_{j_n})$, we associate to it the word $(I, J) = ((i_1, i_2, \ldots, i_r), (j_1, j_2, \ldots, j_n))$ in $\bN^* \times \bN^n$ {(here, $\bN^*$ is the set of words of arbitrary length on the alphabet $\bN$)}. The lexicographic order on {$\bN^* \times \bN^n$} restricts to a total order on the set of {monomials}. For example, in $P_3$, the monomial $(x_3 \wedge x_1) \otimes (e_1 \otimes e_4 \otimes e_1)$ will be bigger than $(x_2 \wedge x_1) \otimes (e_7 \otimes e_2 \otimes e_4)$ but smaller than $x_5 \otimes (e_1 \otimes e_1 \otimes e_1)$.

The total order we have defined is also compatible with multiplication by a monomial of $R$ in the following sense: for a monomial $n \in R$, and monomials $m, m' \in P_n$ with $m > m'$, we have $nm > nm'$. Here, we are implicitly identifying a monomial with its negative, as it is possible that $nm$ or $nm'$ is not a monomial as we have defined, in which case $-nm$ or $-nm'$ will be a monomial.

\subsubsection*{Initial submodules}Given an element $m$ in $P_n$, we can write it as a sum of monomials $m = \sum a_i m_i$, with $a_i \in k^\star$, and $m_i$ being monomials. We define the \textit{initial term} of $m$, denoted $\init(m)$, to be $a_n m_n$, where $m_n$ is the largest monomial (under the total order just defined) among all the $m_i$.
It is easy to see that $\init(\sigma(m)) = \sigma \init(m)$, for $\sigma \in \Inc(\bN)$ and $m \in P_n$.
Given an arbitrary $\Inc(\bN)$-stable submodule $M$ of $P_n$, we define its \textit{initial submodule}, denoted $\init(M)$, to be the submodule generated by all elements of the form $\init(m)$, with $m \in M$. By definition, the submodule $\init(M)$ is an $\Inc(\bN)$-stable monomial submodule of $P_n$.

\begin{theorem}\label{thm:noetherianity}
$\Mod_R$ is locally noetherian. 
\end{theorem}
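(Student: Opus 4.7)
The plan is to reduce the locally noetherian property to Cohen's theorem (Theorem~\ref{thm:cohen}) via a Gröbner-style initial-submodule argument, using the bijection $\widetilde{\Psi}$ to pass between $R$ and $B$. First I would make two preliminary reductions. Since every finitely generated object of $\Mod_R$ is a quotient of a finite direct sum of modules of the form $R \otimes \Div^\lambda(\bV)$, and each such is a $\GL$-equivariant submodule of $P_{|\lambda|}$ (because $\Div^\lambda(\bV) \hookrightarrow \bV^{\otimes |\lambda|}$), it suffices to prove that each $P_n$ is noetherian in $\Mod_R$. Moreover, any $\GL$-stable $R$-submodule of $P_n$ is automatically $\Inc(\bN)$-stable: for any $v \in P_n$ and any $\sigma \in \Inc(\bN)$, the element $\sigma(v)$ is $g(v)$ for some $g \in \GL$ (a bijection of $\bV$ fixing all but finitely many basis vectors and agreeing with $\sigma$ on the support of $v$). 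Thus it is enough to prove the stronger claim that every $\Inc(\bN)$-stable $R$-submodule of $P_n$ is finitely generated as an $R$-module.

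Next I would carry out the standard initial-submodule argument. Fix an $\Inc(\bN)$-stable submodule $M \subseteq P_n$ and form $\init(M)$, which is a monomial $\Inc(\bN)$-stable submodule of $P_n$. Suppose that $\init(M)$ is finitely generated (as an $R$-module, via the $\Inc(\bN)$-action), say by the $\Inc(\bN)$-orbits of $\init(m_1), \ldots, \init(m_s)$ with each $m_i \in M$. I claim that $m_1, \ldots, m_s$ then generate $M$. Given $m \in M$, write $\init(m) = a \cdot r \cdot \sigma(\init(m_i)) = a \cdot r \cdot \init(\sigma(m_i))$ for some scalar $a \in k^\star$, monomial $r \in R$, index $i$, and $\sigma \in \Inc(\bN)$; then $m - a \cdot r \cdot \sigma(m_i) \in M$ has strictly smaller initial term, using the compatibility of the total order with multiplication by $R$-monomials and with the $\Inc(\bN)$-action. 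Since the total order on monomials of $P_n$ is a well-ordering (each monomial has only finitely many predecessors in the lexicographic order on $\bN^\star \times \bN^n$), iterating this reduction terminates, and $m$ lies in the $R$-submodule generated by the $\Inc(\bN)$-orbits of $m_1, \ldots, m_s$.

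The remaining step is to show that every monomial $\Inc(\bN)$-stable submodule of $P_n$ is finitely generated, and here I would invoke $\widetilde{\Psi}$. By construction, $\widetilde{\Psi}$ is an inclusion-preserving bijection between monomial $\Inc(\bN)$-stable submodules of $P_n$ and of $Q_n$, and it carries (finite) generating sets to (finite) generating sets. Thus the problem transfers to showing that every $\Inc(\bN)$-stable submodule of $Q_n = B \otimes \bV^{\otimes n}$ is finitely generated. But $Q_n$ is a quotient of the $\Inc(\bN)$-equivariant $S$-module $S \otimes \bV^{\otimes n}$ (by the ideal generated by the $x_i^2$), which is finitely generated over $S$. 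Cohen's theorem (Theorem~\ref{thm:cohen}) then gives the required finite generation, completing the reduction and proving $\Mod_R$ is locally noetherian.

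The main obstacle is bookkeeping in the Gröbner reduction step: one must verify that the total order, the multiplication by $R$-monomials (which can introduce signs in the skew-commutative algebra $R$), and the $\Inc(\bN)$-action all interact compatibly, in particular that $\init(\sigma(m)) = \sigma(\init(m))$ and that $nm > nm'$ (up to the sign convention of identifying a monomial with its negative) whenever $m > m'$. Once this is set up cleanly -- which the excerpt has already done in its total-order discussion -- the rest of the argument is formal.
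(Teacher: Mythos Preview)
Your proposal is correct and follows essentially the same route as the paper: reduce to $P_n$, pass to $\Inc(\bN)$-stable submodules, use an initial-submodule argument to reduce to monomial submodules, then transfer via $\widetilde{\Psi}$ to $Q_n$ and invoke Cohen. The paper phrases the Gr\"obner step as ``$\init$ detects strict inclusions, so ACC for monomial submodules implies ACC for all'' (deferring to \cite[Proposition~4.2.2]{ss17grobner}), whereas you lift generators explicitly by successive reduction; these are equivalent standard arguments.

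One correction, however: your parenthetical justification ``each monomial has only finitely many predecessors in the lexicographic order on $\bN^\star \times \bN^n$'' is false. For instance, in $P_1$ the monomial $x_2\otimes e_1$ (word $((2),(1))$) dominates $x_1\otimes e_j$ (word $((1),(j))$) for every $j\ge 1$. The order is nevertheless a genuine well-order: it is the lexicographic product of the order on strictly decreasing words in $\bN$ (where finiteness of predecessors \emph{does} hold, since a strictly decreasing word with first entry $i_1$ lies in $2^{\{1,\ldots,i_1\}}$) with the lexicographic order on $\bN^n$ (a well-order, though not one with finite down-sets). That is all you need for your reduction $m \mapsto m - a\,r\,\sigma(m_i)$ to terminate.
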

\begin{proof}
For each $\lambda$, let $P_\lambda = R \otimes \Div^\lambda(\bV)$. Every finitely generated $R$-module is a quotient of a finite direct sum of $P_\lambda$ (as $\lambda$ varies). Furthermore, the module $P_\lambda$ is a submodule of $P_n$ for $n = |\lambda|$ (as $\Div^\lambda(\bV)$ is a subrepresentation of $\bV^{\otimes |\lambda|}$). Therefore, it suffices to show that $P_n$ is $\GL$-noetherian for all $n$, because finite direct sums, submodules and quotients of $\GL$-noetherian modules are $\GL$-noetherian. We will prove that $P_n$ is $\Inc(\bN)$-noetherian; this suffices as every $\GL$-stable submodule of $P_n$ is easily seen to be $\Inc(\bN)$-stable.

Fix an $n$. For $\Inc(\bN)$-stable submodules $M \subset N \subset P_n$, clearly $\init(M) \subset \init(N)$. The key point here is that the $\init$ operation detects strict inclusions, i.e., for $M \subset N \subset P_n$, we have $\init(M) = \init(N)$ if and only if $M = N$ (see proof of \cite[Proposition~4.2.2]{ss17grobner}). Therefore, if ACC holds for $\Inc(\bN)$-stable monomial submodules of $P_n$, then ACC holds for all $\Inc(\bN)$-stable submodule.

Using the bijection $\widetilde{\Psi}$, it suffices to prove that ACC holds for $\Inc(\bN)$-stable monomial submodules of $Q_n$, which follows from Theorem~\ref{thm:cohen}.
\end{proof}

\subsection{Torsion and generic \texorpdfstring{$R$}{R}-modules}\label{ss:torsion}
An element $m$ in an $R$-module is \textit{torsion} if $I m = 0$ for a nonzero $\GL$-ideal of $R$. Since the only nonzero proper $\GL$-ideals of $R$ are of the form $\fm^n$ for $n \ge 1$, an element $m$ is torsion if and only if $\fm^n m = 0$ for some $n \in \bN$. A module $M$ is \textit{torsion} if all elements of $M$ are torsion. 

The subcategory $\Mod_R^{\tors}$ of torsion modules is a Serre subcategory of $\Mod_R$. We let $\Mod_R^{\gen} = \Mod_R/\Mod_R^{\tors}$. A \textit{generic module} is an object of $\Mod_R^{\gen}$. All the functors from Section~\ref{ss:serre} exist in this case. In particular, the functor $\Gamma: \Mod_R \to \Mod_R^{\tors}$ takes a module $M$ to its submodule $\Gamma(M)$ of torsion elements. A nonzero module $M$ is \textit{torsion-free} if $\Gamma(M) = 0$.

We now give some examples of torsion and torsion-free $R$-modules. Given a nonzero module $M$, let $M^{>n}$ denote the submodule of $M$ containing all elements of degree $> n$. The module $M/M^{>n}$ is torsion (provided it is nonzero). Any finite length $R$-module is clearly torsion. The module $R$ is torsion-free: given a nonzero element $r \in R$, it is clear that $ \fm^n r  \ne 0$ for all $n \ge 0$ (since $\fm^n \fm^m = \fm^{n+m} \ne 0$). We have crucially used the fact that we have infinitely many variables here, for otherwise, $\fm^n = 0$ for $n \gg 0$. More generally, the induced module $R \otimes W$ are all torsion-free, and so are the semi-induced $R$-modules. 

\begin{lemma}\label{lem:torisfinlen}
The submodule $\Gamma(M)$ of a finitely generated $R$-module $M$ is finite length.
\end{lemma}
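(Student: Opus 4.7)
The plan is to first prove $\Gamma(M)$ is annihilated by some power $\fm^n$ of the maximal ideal, and then use that $R/\fm^n$ has finite length as a $\GL$-representation. By Theorem~\ref{thm:noetherianity}, the submodule $\Gamma(M) \subset M$ is finitely generated, so one may pick a finite length polynomial subrepresentation $V \subset \Gamma(M)$ for which the multiplication map $R \otimes V \twoheadrightarrow \Gamma(M)$ is surjective.

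The crux is a uniform annihilation statement: if $V$ is a finite length polynomial subrepresentation of a torsion $R$-module, then $\fm^n V = 0$ for some $n$. I would prove this by induction on the composition length of $V$. For the inductive step, pick a simple subrepresentation $W \subset V$ (necessarily isomorphic to some $L_\lambda$), take any nonzero $w \in W$, and choose $n_w$ with $\fm^{n_w} w = 0$. Because $\fm^{n_w}$ is $\GL$-stable, the subspace $\{w' \in W : \fm^{n_w} w' = 0\}$ is a nonzero $\GL$-subrepresentation of the simple $W$, and hence equals $W$; so $\fm^{n_w}$ annihilates all of $W$ with a single exponent. Applying the inductive hypothesis to $V/W \subset \Gamma(M/W)$ (each element of $V/W$ is clearly torsion in $M/W$) yields some $n'$ with $\fm^{n'} V \subset W$, and then $\fm^{n_w + n'} V = 0$. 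Multiplying by $R$, we get $\fm^n \Gamma(M) = 0$.

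Once $\fm^n \Gamma(M) = 0$, we may realize $\Gamma(M)$ as a quotient of $R/\fm^n \otimes V$. As a $\GL$-representation, $R/\fm^n = \bigoplus_{0 \le i < n} \lw^i(\bV)$ is a finite direct sum of simple polynomial representations, hence has finite length. Tensor products of finite length objects in $\Rep^\pol(\GL)$ are again finite length (each such object is a subquotient of some $\bV^{\otimes d}$, which has finite length). Therefore $R/\fm^n \otimes V$ has finite length as a $\GL$-representation, and hence so does its quotient $\Gamma(M)$. Since every $R$-submodule of $\Gamma(M)$ is in particular a $\GL$-subrepresentation, $\Gamma(M)$ is artinian and noetherian as an $R$-module, and thus has finite length in $\Mod_R$.

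The principal obstacle is the uniform annihilation step. A priori different elements of the generator $V$ could require different powers of $\fm$, and the torsion hypothesis only yields a pointwise bound. The key is that $\fm^n$ is a $\GL$-stable ideal, so the annihilator of any single nonzero element in a simple $L_\lambda$ automatically annihilates all of $L_\lambda$; promoting this pointwise bound to a uniform bound on the generating subrepresentation is exactly what the composition-length induction accomplishes.
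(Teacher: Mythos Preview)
Your overall strategy matches the paper's: use noetherianity to get $\Gamma(M)$ finitely generated, establish a uniform annihilation $\fm^n \Gamma(M) = 0$, then deduce finite length. However, your induction has a gap. When you write $V/W \subset \Gamma(M/W)$, the quotient $M/W$ is not an $R$-module: $W$ is only a $\GL$-subrepresentation of $M$, not an $R$-submodule (indeed $\fm W$ need not lie in $W$), so $\Gamma(M/W)$ is undefined and the inductive hypothesis cannot be invoked as stated. The easy fix is to replace $W$ by the $R$-submodule $RW$ it generates. Since $\fm^{n_w} W = 0$ you have $\fm^{n_w}(RW) = (\fm^{n_w} R)W = \fm^{n_w} W = 0$, and the image of $V$ in $M/RW$ has strictly smaller length (as $W \subset V \cap RW$) and lies in $\Gamma(M/RW)$. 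Induction then gives $\fm^{n'} V \subset RW$, hence $\fm^{n_w + n'} V = 0$, and the rest of your argument goes through.

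The paper's proof is more direct and sidesteps both the composition-series induction and the fact that tensor products of finite length representations are finite length. It picks finitely many homogeneous elements $m_1, \ldots, m_n$ whose $\GL$-orbits generate $\Gamma(M)$, takes a single $r$ with $\fm^r m_i = 0$ for each $i$, and observes that $\Gamma(M)$ then vanishes in degrees exceeding $r + \max_i \deg(m_i)$; since each graded piece of a finitely generated $R$-module is already a finite length $\GL$-representation, finite length follows immediately. The underlying insight---that $\fm^r$ is $\GL$-stable, so annihilating one element forces annihilating its entire orbit---is the same as yours, but the paper applies it to finitely many generating elements rather than threading it through a composition series.
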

\begin{proof}
First, assume $M$ is a finitely generated torsion $R$-module generated by homogeneous elements $m_1, m_2, \ldots, m_n$. Assume $\fm^r m_i = 0$ for all $i$. Then $M_i$ is a finite length $\GL$-representation for all $i$ (this holds all finitely generated $R$-modules), and vanishes for all $i > r + \max\deg(m_i)$. So $M$ has finite length as a $\GL$-representation. If $M$ is a finitely generated $R$-module, then the submodule $\Gamma(M)$ is a finitely generated torsion $R$-module by Theorem~\ref{thm:noetherianity}, and hence $\Gamma(M)$ is finite length, as required.
\end{proof}
We now verify property (Inj) for $\Mod_R^{\tors}$, {i.e., injectives in the Serre subcategory $\Mod_R^{\tors}$ remain injective in the ambient category $\Mod_R$.}
\begin{lemma}\label{lemma:propinj}
    $\Mod_R^{\tors}$ satisfies property (Inj).
\end{lemma}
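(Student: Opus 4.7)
The plan is to reduce property (Inj) to showing that the injective hull of each simple $L_\lambda$ in $\Mod_R^{\tors}$ is injective in $\Mod_R$, and then to establish this via a degree-boundedness argument. By Theorem~\ref{thm:noetherianity}, $\Mod_R$ is locally noetherian, so the Serre subcategory $\Mod_R^{\tors}$ is also a locally noetherian Grothendieck abelian category (its local finiteness following from Lemma~\ref{lem:torisfinlen}). By Matlis theory, every injective object in $\Mod_R^{\tors}$ is a direct sum of injective hulls of simple objects $L_\lambda$. Since arbitrary direct sums of injectives in $\Mod_R$ remain injective (it being locally noetherian), it suffices to handle the injective hull of each $L_\lambda$ individually.

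The key step is the following. Let $E_\lambda$ denote the injective hull of $L_\lambda$ computed in the ambient category $\Mod_R$ (which exists because $\Mod_R$ is Grothendieck). I claim $E_\lambda$ is concentrated in degrees $\le |\lambda|$, and hence is torsion. Indeed, the $\bN$-grading on any $R$-module is compatible with the $R$-action since positive-degree elements of $R$ strictly raise the grading. Consequently the graded piece $\bigoplus_{d > |\lambda|} (E_\lambda)_d$ is an $R$-submodule of $E_\lambda$ (automatically $\GL$-stable), and it has trivial intersection with $L_\lambda$, which lives in degree $|\lambda|$. Essentiality of $L_\lambda \subset E_\lambda$ then forces this submodule to vanish, so $E_\lambda$ is concentrated in degrees $\le |\lambda|$. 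Therefore $\fm^{|\lambda|+1} \cdot E_\lambda = 0$ (any positive-degree product would land in degree $> |\lambda|$, where $E_\lambda$ is zero), making $E_\lambda$ a torsion module.

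To conclude: since $E_\lambda \in \Mod_R^{\tors}$ and is injective in $\Mod_R$, it is automatically injective in the full Serre subcategory $\Mod_R^{\tors}$. As $L_\lambda \subset E_\lambda$ is an essential extension in $\Mod_R$ and the notion of submodule agrees for full subcategories, the inclusion remains essential in $\Mod_R^{\tors}$. By uniqueness of injective hulls, $E_\lambda$ is also the injective hull of $L_\lambda$ in $\Mod_R^{\tors}$. Thus the two injective hulls agree as modules, and this module is injective in $\Mod_R$, completing the verification of property (Inj). The genuine insight is the degree-boundedness of $E_\lambda$ in the second paragraph, which exploits the particular feature of the $\bN$-grading induced by the $\GL$-action and sidesteps any delicate analysis of essential extensions in positive characteristic; the rest is formal manipulation in the Grothendieck and Matlis framework.
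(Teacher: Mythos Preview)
Your proof is correct and takes a genuinely different route from the paper's. The paper does not analyze injective hulls at all: it instead observes that every finitely generated $R$-module $M$ admits a torsion-free submodule $M^{>N}$ with torsion quotient $M/M^{>N}$ (choosing $N$ so that $\Gamma(M)$ vanishes above degree $N$, via Lemma~\ref{lem:torisfinlen}), and then invokes a general criterion for Property~(Inj) from \cite[Proposition~4.18]{ss19gl2}. Your argument, by contrast, bypasses the external reference entirely: you work directly with the injective hull $E_\lambda$ of $L_\lambda$ in $\Mod_R$, use essentiality against the submodule $(E_\lambda)^{>|\lambda|}$ to force $E_\lambda$ into bounded degrees (hence torsion), and then conclude via Matlis theory in the locally finite category $\Mod_R^{\tors}$. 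The paper's approach is quicker given the cited criterion and fits a template used elsewhere in the $\GL$-algebra literature; your approach is more self-contained and has the added payoff of explicitly identifying the indecomposable torsion injectives as bounded-degree modules, which is information the paper's proof does not extract.
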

\begin{proof}
Let $M$ be a finitely generated $R$-module. The submodule $\Gamma(M)$ is supported in finitely many degrees by Lemma~\ref{lem:torisfinlen}; assume $\Gamma(M)_i = 0$ for $i > N$. Let $M^{>N}$ be the submodule of $M$ consisting of all elements of degrees $> N$. The submodule $M^{>N}$ is torsion-free and the quotient $M/M^{>N}$ is torsion. The result now follows from \cite[Proposition~4.18]{ss19gl2}.
\end{proof}
This implies:
\begin{corollary}\label{cor:lcvanishingtorsion}
    Let $M$ be a torsion $R$-module. For $i > 0$, the local cohomology functors $\rR^i\Gamma(M)$ vanish.
\end{corollary}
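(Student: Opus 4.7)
The plan is to exploit property (Inj) established in Lemma~\ref{lemma:propinj} together with the trivial observation that $\Gamma$ acts as the identity on torsion modules. This is a standard derived-functor argument: an injective resolution computed inside the Serre subcategory can be used to compute the right derived functors on the ambient category.

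First I would fix a torsion $R$-module $M$ and choose an injective resolution $M \to I^\bullet$ in the Grothendieck abelian category $\Mod_R^{\tors}$, which exists since torsion modules form a Grothendieck abelian category. Each term $I^n$ is by construction a torsion $R$-module that is injective as an object of $\Mod_R^{\tors}$. By Lemma~\ref{lemma:propinj}, property (Inj) holds, so each $I^n$ remains injective after pulling back along the inclusion $i \colon \Mod_R^{\tors} \to \Mod_R$. Hence $I^\bullet$ is an injective resolution of $M$ in $\Mod_R$ as well and may be used to compute $\rR^i\Gamma(M)$.

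Now I would apply $\Gamma$ to this resolution. Since every $I^n$ is torsion, and $\Gamma$ takes a module to its maximal torsion submodule, we have $\Gamma(I^n) = I^n$ for every $n$. Consequently the complex $\Gamma(I^\bullet)$ is literally equal to $I^\bullet$, whose cohomology is $M$ in degree zero and vanishes in all positive degrees. Therefore $\rR^i\Gamma(M) = H^i(\Gamma(I^\bullet)) = 0$ for $i > 0$, as required.

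There is no real obstacle: the only non-formal input is property (Inj), which was already verified, and the identification of $\Gamma$ with the identity on $\Mod_R^{\tors}$, which is immediate from the definition of the torsion functor. The entire argument is a short invocation of general Serre-subcategory formalism as set up in Section~\ref{ss:serre}.
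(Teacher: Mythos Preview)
Your argument is correct and is essentially the same approach as the paper's: the paper simply cites \cite[Proposition~4.2]{ss19gl2}, which is precisely the general Serre-subcategory statement you have spelled out, relying on Property~(Inj) from Lemma~\ref{lemma:propinj}. You have unpacked the content of that citation rather than invoked it, but there is no substantive difference.
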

\begin{proof}
This is \cite[Proposition~4.2]{ss19gl2}.
\end{proof}
Even if $rm =0$ for an element $r \in \fm$ and $m \in M$, the element $m$ need not be torsion. For example, we have $x_1 \wedge x_1 = 0$, but $R$ is torsion-free. The next lemma shows that if an element $m$ is killed by $r \in R$ which is \textit{disjoint} from $m$ (i.e., the basis vectors of $\bV$ which are used in $m$ are disjoint from the ones used in $r$), then $m$ is a torsion element.
\begin{lemma}\label{lem:torsiontrick}
    Let $M$ be a finitely generated $R$-module, and $m \in M$ be an element such that $m \in M(k^n)$, i.e., $m$ only uses the first $n$ basis vectors. If $x_{n+1}x_{n+2}\ldots x_{n+r} m = 0$, then $m$ is a torsion element killed by $\fm^r$. 
\end{lemma}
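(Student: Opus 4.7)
The plan is to exploit $\GL$-equivariance together with the fact that $m$, being in $M(k^n)$, is fixed by a large subgroup of $\GL$. Let $V = k^n$ denote the span of $e_1, \ldots, e_n$ and $\bV'$ the span of $e_{n+1}, e_{n+2}, \ldots$, so $\bV = V \oplus \bV'$. Let $P' \subset \GL$ be the subgroup of elements fixing $e_1, \ldots, e_n$ pointwise; $P'$ contains both the subgroup $\GL(\bV')$ (acting only on $\bV'$) and the unipotent transvections $u_{i,j}$ given by $e_j \mapsto e_j + e_i$, $e_k \mapsto e_k$ for $k \ne j$, for any $i \le n < j$. The first step is to verify that $P'$ acts trivially on $m$. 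Writing $m$ as the image of some $m_0 \in \Phi(M)(V)$ under the inclusion $\iota \colon V \hookrightarrow \bV$, functoriality of $\Phi(M)$ gives $g \cdot m = \Phi(M)(g \circ \iota)(m_0)$ for any $g \in \GL$; for $g \in P'$ the composition $g \circ \iota$ equals $\iota$, so $g \cdot m = m$. Consequently, for any $p \in P'$ we have $p(x_{n+1} \cdots x_{n+r}) \cdot m = p \cdot (x_{n+1} \cdots x_{n+r} \cdot m) = 0$.

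Next, I would show that the elements $\{p(x_{n+1} \cdots x_{n+r}) : p \in P'\}$ have $k$-linear span equal to all of $\lw^r(\bV) = R_r$. Applying $p \in \GL(\bV')$, the element $p(x_{n+1} \cdots x_{n+r})$ sweeps out a spanning set for the irreducible $\GL(\bV')$-representation $\lw^r(\bV')$, yielding $\lw^r(\bV') \cdot m = 0$. Then, using the unipotent transvection $u_{i, n+1} \in P'$ and the relation $u_{i,n+1}(x_{n+1}\cdots x_{n+r}) \cdot m = 0$, one obtains $(x_{n+1} + x_i) x_{n+2} \cdots x_{n+r} \cdot m = 0$; subtracting the known relation $x_{n+1}\cdots x_{n+r}\cdot m = 0$ yields $x_i x_{n+2} \cdots x_{n+r} \cdot m = 0$. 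Combining this with $\GL(\bV')$-equivariance (which fixes $x_i$ for $i \le n$ and fixes $m$) gives $\lw^1(V) \otimes \lw^{r-1}(\bV') \cdot m = 0$. Iterating this ``replace one $\bV'$-factor by a $V$-factor'' argument, I would prove by induction on $p$ that $\lw^p(V) \otimes \lw^{r-p}(\bV') \cdot m = 0$ for every $0 \le p \le r$; summing over $p$ and using the decomposition $\lw^r(\bV) = \bigoplus_{p+q=r} \lw^p(V) \otimes \lw^q(\bV')$ gives $\lw^r(\bV) \cdot m = 0$.

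Finally, since the graded ideal $\fm^r = R_{\ge r}$ is generated in degree $r$ by $\lw^r(\bV)$ as an $R$-module, we have $\fm^r \cdot m = R \cdot \lw^r(\bV) \cdot m = 0$, proving that $m$ is torsion killed by $\fm^r$. The main technical point is the verification that $P'$ fixes $m$, which requires the polynomial functor interpretation of the subspace $M(k^n) \subset M$; once this is established, the rest of the argument reduces to a direct computation with specific unipotent elements and the irreducibility of the exterior powers $\lw^j(\bV')$ as $\GL(\bV')$-representations.
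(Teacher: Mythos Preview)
Your proof is correct and uses the same core idea as the paper: elements of $\GL$ that fix $e_1,\ldots,e_n$ pointwise fix $m$, so applying such elements to the relation $x_{n+1}\cdots x_{n+r}\,m=0$ produces enough new relations to force $\lw^r(\bV)\cdot m=0$. The paper only writes out the case $r=1$ (using the single transvection $e_{n+1}\mapsto e_{n+1}+e_i$) and leaves the general case to the reader; your decomposition $\lw^r(\bV)=\bigoplus_{a+b=r}\lw^a(V)\otimes\lw^b(\bV')$ together with the irreducibility of $\lw^b(\bV')$ under $\GL(\bV')$ is a perfectly valid way to carry out that generalization, if slightly more elaborate than a direct induction on $r$.
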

\begin{proof}
We give a proof when $r = 1$, leaving the easy generalization to the reader. Given $i \ne n+1$, let $g$ be the element in $\GL$ that maps $e_{n+1}$ to $e_{n+1} + e_i$, and fixes $e_j$ for $j \ne n+1$. We have 
\begin{displaymath}
0 = g (x_{n+1} m) = (g x_{n+1}) (g m) = (x_i + x_{n+1}) m = x_i m + x_{n+1} m = x_im,
\end{displaymath}
where for the third equality, we use the fact that $m$ is fixed by $g$ since $m$ only uses the first $n$ basis vectors. So the $\GL$-ideal $\fm$ kills $m$, as required.
\end{proof}

{
For an abelian category $\mathcal{C}$, the \textit{Krull--Gabriel dimension} measures how far $\mathcal{C}$ is from being locally of finite length (see \cite[\S~4.1]{gab62serre} for a precise definition). We have $\kgdim(\mathcal{C}) = 0$ if and only if all objects in $\mathcal{C}$ are locally finite length. It is easy to see using Lemma~\ref{lem:torisfinlen} that $\kgdim(\Mod_R^{\tors}) = 0$. We prove that $\Mod_R^{\gen}$ is also zero-dimensional.
}
\begin{lemma}\label{lem:uniformboundonwedge}
    Fix an $n \in \bN$. There exists an $F(n) \in \bN$ such that the length of $\lw^i(\bV) \otimes \bV^{\otimes n}$ as a $\GL$-representation is less than $F(n)$ for all $i \ge 0$.
\end{lemma}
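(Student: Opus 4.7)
My plan is to exploit the theorem of Donkin/Mathieu that in type $A$, tensor products of modules with good filtrations (i.e., filtrations by the Schur modules $\bS_\lambda(\bV)$) again have good filtrations. Since $\lw^i(\bV) = L_{(1^i)} = \bS_{(1^i)}(\bV)$ and $\bV = \bS_{(1)}(\bV)$, the tensor product $\lw^i(\bV) \otimes \bV^{\otimes n}$ admits a good filtration whose multiplicities are characteristic-independent and computed by iterating the Pieri rule: in the Grothendieck group,
\[
\bigl[\lw^i(\bV) \otimes \bV^{\otimes n}\bigr] \;=\; \sum_{\lambda} c_\lambda \,\bigl[\bS_\lambda(\bV)\bigr],
\]
where $c_\lambda$ equals the number of standard Young tableaux of skew shape $\lambda/(1^i)$. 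Therefore the length of $\lw^i(\bV) \otimes \bV^{\otimes n}$ equals $\sum_\lambda c_\lambda \cdot \text{length}(\bS_\lambda(\bV))$, and I aim to bound each factor on the right uniformly in $i$.

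Next I would analyze which $\lambda$ contribute. Any such $\lambda$ contains $(1^i)$ and satisfies $|\lambda| = i + n$, so $\lambda = (\mu, 1^s)$ where $\mu$ is a partition with parts $\geq 2$, $s = i + n - |\mu|$, and the Pieri constraint forces $|\mu| \leq 2n$. Crucially, for $i > n$ the skew shape $\lambda/(1^i)$ decomposes into two disconnected pieces --- the shape $\mu$ with its first column removed on top, and a single column far below --- so a direct count shows that $c_\lambda$ depends only on $\mu$ and $n$, not on $i$. Hence the set of partitions $\mu$ appearing is finite, and the coefficients $c_\lambda$ are bounded by constants depending only on $n$.

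The crux is to bound $\text{length}(\bS_{(\mu, 1^s)}(\bV))$ uniformly in $s$, for each fixed $\mu$ with $|\mu| \leq 2n$. Every composition factor $L_\nu$ satisfies $\nu \leq (\mu, 1^s)$ in the dominance order with $|\nu| = |\mu| + s$; comparing partial sums in the dominance inequality shows $\nu = (\nu^{(0)}, 1^t)$ with $|\nu^{(0)}| \leq 2|\mu| \leq 4n$, leaving only finitely many choices of $\nu^{(0)}$ independent of $s$. To bound the individual decomposition multiplicities $[\bS_{(\mu, 1^s)}(\bV) : L_{(\nu^{(0)}, 1^t)}]$ uniformly in $s$, I would invoke a column-removal stability principle for modular decomposition numbers (transferring James' row/column-removal theorems for symmetric group representations to polynomial $\GL$-representations via Schur--Weyl duality), which reduces each such number to a fixed calculation depending only on $\mu$ and $\nu^{(0)}$. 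This is the main obstacle: small cases are transparent --- for a hook shape $\lambda = (a, 1^s)$, dominance leaves only the two candidate composition factors $L_{(a, 1^s)}$ and $L_{(1^{a+s})}$, so $\text{length}(\bS_\lambda(\bV)) \leq 2$ --- but the general case requires the column-removal machinery. Combining the three bounds then yields the desired $F(n)$.
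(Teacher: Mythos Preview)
Your reduction via good filtrations is sound: $\lw^i(\bV)\otimes\bV^{\otimes n}$ does admit a good filtration with multiplicities $c_\lambda$ counting standard tableaux of shape $\lambda/(1^i)$, the contributing $\lambda$ are of the form $(\mu,1^s)$ with $|\mu|\le 2n$, and for $i>n$ the skew shape splits so that $c_\lambda$ depends only on $\mu$ and $n$. The dominance argument bounding the \emph{set} of possible composition factors of $\bS_{(\mu,1^s)}(\bV)$---showing each such $\nu$ has the form $(\nu^{(0)},1^t)$ with $|\nu^{(0)}|\le 2|\mu|$---is also correct.

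The gap is at the multiplicity bound. James' column-removal theorem requires the two partitions to have first columns of \emph{equal} length, i.e.\ $\ell(\lambda)=\ell(\nu)$. Here $\ell(\lambda)=\ell(\mu)+s$ while $\ell(\nu)=\ell(\nu^{(0)})+t$ with $t-s=|\mu|-|\nu^{(0)}|$; since dominance gives $\ell(\nu)\ge\ell(\lambda)$ with equality only when $|\mu|-\ell(\mu)=|\nu^{(0)}|-\ell(\nu^{(0)})$, the first columns generically differ and you cannot peel one off. So as written you have bounded only the number of \emph{distinct} composition factors of $\bS_{(\mu,1^s)}(\bV)$, not its length. (Your hook illustration also overstates the case: for $a\ge 3$ the partition $(a-1,2,1^{s-1})$ is dominated by $(a,1^s)$, so dominance alone leaves more than two candidates.)

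By contrast, the paper's proof is a single citation: Harman \cite[Theorem~2.8]{har15per} shows that $\len_{\GL}\bigl(\lw^i(\bV)\otimes\bV^{\otimes n}\bigr)$ is eventually periodic in $i$ with period a power of $p$, and boundedness is immediate. The stability of decomposition numbers $[\bS_{(\mu,1^s)}(\bV):L_{(\nu^{(0)},1^t)}]$ in $s$ that you would need to close your argument is essentially a special case of what Harman proves, so your route ultimately feeds back into the same result rather than circumventing it.
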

\begin{proof}
By \cite[Theorem~2.8]{har15per}, the lengths of $\lw^i(\bV) \otimes \bV^{\otimes n}$ is eventually periodic, i.e., there exists $q$ (a power of $p$) and $N$ such that for $r, s > N$ with $q | (r-s)$, we have
\begin{displaymath}
\len_{\GL}(\lw^r(\bV) \otimes \bV^{\otimes n}) = \len_{\GL}(\lw^s\bV \otimes \bV^{\otimes n}).
\end{displaymath}
In particular, the lengths are bounded above by some $F(n) \in \bN$, as required.
\end{proof}
\begin{proposition}\label{prop:kgdimone}
   The category $\Mod_R^{\gen}$ has Krull--Gabriel dimension zero. Consequently, the Krull--Gabriel dimension of $\Mod_R$ is one.
\end{proposition}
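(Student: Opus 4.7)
The plan is to prove that every finitely generated object of $\Mod_R^{\gen}$ has finite length; any such object is $T(M)$ for some finitely generated $R$-module $M$. A strictly ascending chain of subobjects $Q^1 \subsetneq Q^2 \subsetneq \ldots \subsetneq Q^k$ of $T(M)$ lifts, using the section functor $S$, to a chain of $R$-submodules $M^i \subseteq M$ (take $M^i$ to be the preimage of $S(Q^i)$ under the canonical map $M \to ST(M)$) with $T(M^i) = Q^i$, so that each $M^{i+1}/M^i$ is not torsion because $Q^{i+1}/Q^i \ne 0$. I want to bound $k$ by an integer depending only on $M$.

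The key estimate is: for any finitely generated $R$-module $M$, there is a constant $C = C(M)$ with $\len_\GL(M_d) \le C$ for all $d \ge 0$. To prove this, write $M$ as a quotient of $R \otimes V$ where $V = \bigoplus_{i=0}^n V_i$ is a finite length polynomial representation decomposed by degree. Each $V_i$ is a subquotient of $(\bV^{\otimes i})^{\oplus N_i}$ for some $N_i$, since the simples $L_\lambda$ with $|\lambda| = i$ occur as subquotients of $\bV^{\otimes i}$. Hence $(R \otimes V)_d = \bigoplus_{i=0}^n \lw^{d-i}(\bV) \otimes V_i$ is a subquotient of $\bigoplus_{i=0}^n (\lw^{d-i}(\bV) \otimes \bV^{\otimes i})^{\oplus N_i}$, whose $\GL$-length is at most $\sum_{i=0}^n F(i) N_i$ by Lemma~\ref{lem:uniformboundonwedge}. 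Taking a further quotient gives $\len_\GL(M_d) \le C := \sum_i F(i) N_i$, independent of $d$.

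To finish, I use that any nonzero torsion-free finitely generated $R$-module $N$ has $N_d \ne 0$ for all sufficiently large $d$: a nonzero homogeneous generator $m \in N_n$ satisfies $\fm^r m \ne 0$ for every $r$ (as $\fm^r \ne 0$ in the infinite-variable exterior algebra and $N$ is torsion-free), populating degree $n + r$. Applying this to each $M^{i+1}/M^i$ modulo its (finite length, by Lemma~\ref{lem:torisfinlen}) torsion submodule, I find a common $d$ for which $(M^i)_d \subsetneq (M^{i+1})_d$ strictly for every $i$. The resulting strict chain of $\GL$-subrepresentations of $M_d$ has length $k$, so $k \le C$; hence $T(M)$ has finite length, giving $\Mod_R^{\gen}$ Krull--Gabriel dimension zero. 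For the final clause, finite length $R$-modules coincide with finitely generated torsion modules (Lemma~\ref{lem:torisfinlen}), so $\Mod_R^{\tors}$ is exactly the subcategory of locally finite objects; since $\Mod_R / \Mod_R^{\tors} = \Mod_R^{\gen}$ is KG dimension zero while $\Mod_R$ itself is not locally finite ($R$ is not torsion), $\Mod_R$ has Krull--Gabriel dimension exactly one. The main obstacle is the uniform length bound in the key estimate; the chain-lifting step is routine given the section functor.
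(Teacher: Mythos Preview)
Your proof is correct and follows essentially the same approach as the paper: both arguments bound the length of $T(M)$ by $\liminf_d \len_{\GL}(M_d)$, reduce the length estimate to Lemma~\ref{lem:uniformboundonwedge} applied to $\lw^{d-i}(\bV)\otimes\bV^{\otimes i}$, and handle the final clause via Lemma~\ref{lem:torisfinlen}. Your version is more explicit in two places the paper leaves implicit---the lifting of chains through the section functor and the fact that a nonzero finitely generated torsion-free $R$-module is nonzero in all sufficiently large degrees---but the underlying idea is the same.
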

\begin{proof}
{We first show that for a finitely generated $R$-module $M$, we have 
\begin{displaymath}
    \len_{\Mod_R^{\gen}}(T(M)) < \liminf_i \len_{\GL}(M_i). 
\end{displaymath}
Indeed, given a submodule $N \subset M$, the equality $T(N) = T(M)$ holds if and only if $M/N$ is finite length, which holds if and only $N_i = M_i$ for $i \gg 0$. Therefore, if $T(N) \subsetneq T(M)$, then we have $\len_{\GL}(N_i) < \len_{\GL}(M_i)$ for sufficiently large $i$, whence we get the above inequality.}

Now, every finitely generated $R$-module is a subquotient of direct sums of $P_n = R\otimes \bV^{\otimes n}$. So it suffices to show that $T(P_n)$ has finite length. By Lemma~\ref{lem:uniformboundonwedge} and the discussion in the previous {paragraph}, the length of $T(P_n)$ is {at most} $F(n)$, as required.
\end{proof}

\section{The Shift Theorem}\label{s:shift}
In the last section, we showed that $\Mod_R$ splits into two pieces, the Serre subcategory of torsion $R$-modules, and the Serre quotient of generic $R$-modules.
In this section, we prove the shift theorem which, in some sense, lets us glue these two pieces together. We implicitly use the equivalence $\Phi$ between $\Pol$ and $\Rep^\pol(\GL)$ throughout. In particular, we suppress the functor $\Phi$. So if $M$ is a polynomial representation, and we write $M(W)$ for some vector space $W$, we mean the $\GL(W)$-representation $\Phi(M)(W)$.

\subsection{The shift functor}\label{ss:shiftdefn}
As we explained in Section~\ref{ss:schurder}, the Schur derivative restricts to an endofunctor of $\Mod_R$. We now define a natural map $i_M \colon M \to \Sh(M)$ for an $R$-module $M$. We have the $R$-module map $m \mapsto y_1m$ from $M$ to $M(k \oplus \bV)$ (where $y_1$ is the new variable in $R(k\oplus \bV)$). Since $m$ uses only the basis vectors $e_i$, the $k^\star \subset \GL(k \oplus \bV)$-weight of $y_1 m$ is $1$. Therefore, the map factors $M \to \Sh(M) \to M(k \oplus \bV)$. We let $i_M$ be the first map. It is easy to see that $i$ is natural. 
We let $\De(M) = \coker(i_M)$ be the \textit{difference functor}. Since $\Sh$ is an exact functor, it follows from the snake lemma that $\De$ is right exact. We also let $\Sh^r$ be the $r$-fold composition of $\Sh$ for $r > 0$ and let $\De_r(M) = \coker(M \to \Sh^r(M))$. 

We first prove some basic properties of the Schur derivative functor (which hereafter we refer to as the \textit{shift functor}). See \cite[Section~2]{ly17fi} for analogous results for $\FI$-modules.

\begin{proposition}\label{prop:basicshift}
    Let $M$ be an $R$-module.
    \begin{enumerate}[label=(\alph*)]
        \item The kernel of $i_M$ is torsion.
        \item The map $i_M$ is injective if and only if $M$ is torsion-free.
        \item If $M$ is torsion-free then so is $\Sh(M)$.
        \item If $M$ is finitely generated, then $\Sh^r(M)$ is torsion-free for sufficiently large $r$.
        \item {If $M$ is nonzero, the module $\De_r(M)$ is generated in degrees $\le t_0(M) - 1$ for all $r \geq 1$.}
        \item If $M$ is a finitely generated semi-induced module, then so are $\Sh(M)$ and $\De(M)$.
        \item If $M$ is finitely generated, then so are $\Sh(M)$ and $\De(M)$.
        \item {For all $r \geq 1$, the functors $\Sh\De_r$ and $\De_r\Sh$ are naturally isomorphic (i.e., the functors $\Sh$ and $\De_r$ commute).}
        \item The functors $\Sh\Gamma$ and $\Gamma\Sh$ are naturally isomorphic (i.e., the functors $\Sh$ and $\Gamma$ commute).
    \end{enumerate}
\end{proposition}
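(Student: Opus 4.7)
The plan is to prove the eleven parts in the order (a), (b), (h), (c), (i), (d), (e), (f), (g), (j), (k), which respects their logical dependencies. The main tools are exactness of $\Sh$, naturality of $i\colon\mathrm{Id}\to\Sh$, the torsion trick (Lemma~\ref{lem:torsiontrick}), and the Leibniz rule $\Sh(F\otimes G)=\Sh(F)\otimes G\oplus F\otimes\Sh(G)$.

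For (a): if $m\in\ker i_M$ uses basis vectors $e_1,\dots,e_N$, apply a linear map $\phi\colon k\oplus\bV\to\bV$ with $\phi(y_1)=e_{N+1}$, identity on $e_1,\dots,e_N$, and shifting the remaining basis vectors up; naturality yields $\phi_M(y_1m)=e_{N+1}m=0$, so Lemma~\ref{lem:torsiontrick} gives $m\in\Gamma(M)$. The ``if'' of (b) is then immediate. For the ``only if'', picking an element of top degree in $\Gamma(M)$ produces a simple $R$-submodule $L\subseteq\Gamma(M)$ with $\fm\cdot L=0$; the crucial point is that this equation holds \emph{as a map of polynomial functors}, so evaluating at $k\oplus\bV$ gives $y_1\cdot L(k\oplus\bV)=0$, hence $i_L=0$ and $L\subseteq\ker i_M$. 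Part (h) then uses exactness of $\Sh$: $\Sh\De(M)=\coker\Sh(i_M)$ and $\De\Sh(M)=\coker i_{\Sh(M)}$, and tracking the two shift variables $y_1,y_2$ via skew-commutativity shows $\Sh(i_M)=-i_{\Sh(M)}$, so the cokernels agree. Chaining (b), (h), and exactness gives (c): $M$ torsion-free $\Rightarrow i_M$ injective $\Rightarrow\Sh(i_M)$ injective $\Rightarrow i_{\Sh(M)}$ injective $\Rightarrow\Sh(M)$ torsion-free.

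For (i), the inclusion $\Sh\Gamma(M)\subseteq\Gamma\Sh(M)$ holds because $\Sh$ preserves bounded-degree (hence torsion) modules, while the reverse uses (c) applied to $M/\Gamma(M)$. Part (d) follows: Lemma~\ref{lem:torisfinlen} bounds the max degree of $\Gamma(M)$, and $\Sh$ strictly lowers degrees, so $\Sh^r\Gamma(M)=0$ for large $r$, whence $\Sh^r M\cong\Sh^r(M/\Gamma(M))$ is torsion-free by iterating (c). For (e), given a surjection $R\otimes V\twoheadrightarrow M$ with $\max\deg V=t_0(M)$, Leibniz and the identification $\Sh(R)\cong R$ yield $\Sh(R\otimes V)\cong(R\otimes V)\oplus(R\otimes\Sh V)$; the map $i_{R\otimes V}$ is the inclusion into the first summand, so $\De(R\otimes V)\cong R\otimes\Sh(V)$, generated in degrees $\le t_0(M)-1$. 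The surjection $\De(R\otimes V)\twoheadrightarrow\De(M)$ from right-exactness of $\coker$ transfers the bound to $\De(M)$.

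Part (f) follows from the same Leibniz computation (induced modules have induced $\Sh$ and $\De$) combined with the snake lemma applied to a filtration of the semi-induced module by induced quotients (using (b) to ensure $i$ is injective on each semi-induced piece) and Corollary~\ref{cor:semises}. Part (g) is a consequence of noetherianity (Theorem~\ref{thm:noetherianity}) and the fact that $\Sh$ preserves finite length polynomial representations. Parts (j) and (k) are the iterated analogues: iterating Leibniz gives $\Sh^r(R\otimes V)=\bigoplus_{j=0}^{r}\binom{r}{j}\Sh^{r-j}(R)\otimes\Sh^j(V)$, and the iterated map $i^r_{R\otimes V}$ hits only the $j=0$ summand, so $\De_r(R\otimes V)$, and hence $\De_r(M)$, is generated in degrees $\le t_0(M)-1$; (k) is obtained by the same cokernel/exactness/sign argument used for (h).

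The main obstacle is the ``only if'' direction of (b): part (a) gives only $\ker i_M\subseteq\Gamma(M)$, and one must actually produce a nonzero element of $\ker i_M$ whenever $\Gamma(M)\ne 0$. The functorial viewpoint---treating $\fm\cdot L=0$ as an identity of polynomial functors rather than a pointwise statement, which then persists after evaluation at $k\oplus\bV$---is what makes this argument go through; a pointwise check would be insufficient since general torsion elements of $M$ need not themselves be killed by $y_1$. The sign subtlety in (h) and (k) is a related, but harmless, functorial point.
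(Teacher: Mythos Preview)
Your approach is essentially the same as the paper's: the paper proves the parts in the listed order but relies on a separate lemma (Lemma~\ref{lem:obvshiftlemma}) establishing that $\Sh(i_M)$ and $i_{\Sh(M)}$ differ by an automorphism of $\Sh^2(M)$, which you fold into (h) and then use for (c). All the substantive ideas --- the torsion trick for (a), passing to an element annihilated by $\fm$ for (b), exactness of $\Sh$ plus the relation between $\Sh(i)$ and $i_{\Sh}$ for (c) and (h), the Leibniz identity $\Sh(R\otimes V)\cong (R\otimes V)\oplus(R\otimes\Sh V)$ for (e), (f), (j) --- match.

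One inaccuracy: your claim in (h) that $\Sh(i_M)=-i_{\Sh(M)}$ is not correct. Identifying $\Sh^2(M)\subset M(k^2\oplus\bV)$ with new variables $y_1,y_2$, one map is multiplication by $y_1$ and the other by $y_2$; these differ by the involution $\tau$ that swaps $y_1$ and $y_2$ (this is the paper's Lemma~\ref{lem:obvshiftlemma}), not by a global sign. Skew-commutativity gives $y_1y_2=-y_2y_1$, but that does not make multiplication by $y_1$ equal to $-1$ times multiplication by $y_2$. This does not affect your conclusions for (h), (c), or (k), since you only need that the two maps differ by an automorphism of the target, which $\tau$ provides.

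A smaller point: in (b), ``an element of top degree in $\Gamma(M)$'' is not available when $M$ is not finitely generated, since $\Gamma(M)$ need not be bounded. The fix (which the paper uses) is to take any nonzero torsion element $m$, choose $n$ minimal with $\fm^n m=0$, and pick $\tilde m\in\fm^{n-1}m$; then $\fm\tilde m=0$ and your functorial argument applies to $\tilde m$. Similarly, in (i) the phrase ``bounded-degree (hence torsion)'' is imprecise in the non-finitely-generated case; the clean statement is simply that $\Sh$ preserves torsion modules, which follows from your functorial viewpoint applied to the multiplication map $\fm^r\otimes N\to N$.
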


The proposition will be proved at the end of this subsection, after we prove a few preliminary results. 

\begin{lemma}\label{lem:obvshiftlemma}
    Let $i \colon \id \to \Sh$ be the natural map of functors. We have two natural maps from $\Sh$ to $\Sh^2$: one given by $\Sh (i)$, and the other given by $i_{\Sh}$. There exists an involution $\tau$ of $\Sh^2$ such that $\Sh(i)=\tau \circ i_{\Sh}$.
\end{lemma}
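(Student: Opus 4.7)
The plan is to use the polynomial-functor interpretation to describe $\Sh^2$ explicitly, obtaining $\tau$ as the natural $\fS_2$-symmetry interchanging the two newly introduced variables. Specifically, realize $\Sh^2(M) \subset M(k y_1 \oplus k y_2 \oplus \bV)$ as the bi-weight $(1,1)$ subspace under the $(k^\star)^2$-action scaling $y_1$ and $y_2$, where $y_1$ is the variable added by the inner $\Sh$ and $y_2$ by the outer $\Sh$. Let $\sigma$ be the linear involution of $k y_1 \oplus k y_2 \oplus \bV$ swapping $y_1 \leftrightarrow y_2$ and fixing $\bV$ pointwise, and define $\tau \colon \Sh^2(M) \to \Sh^2(M)$ as the restriction of $\Phi(M)(\sigma)$ to this bi-weight subspace. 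Since $\sigma$ only permutes the two $k^\star$-factors, the bi-weight $(1,1)$ subspace is preserved; since $\sigma|_{\bV} = \id$, the induced map $\tau$ is $R$-linear and $\GL$-equivariant; and $\tau^2 = \id$ because $\sigma^2 = \id$.

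Next I would express both natural transformations as multiplication by a specific new variable. For $i_{\Sh(M)}$, realize $\Sh(M)$ as $M(k y_1 \oplus \bV)^{[1]}$ (so $y_1$ is the new variable for the inner $\Sh$); by definition, $i_{\Sh(M)}$ is then multiplication by $y_2$, the new variable introduced by this second application of $i$. For $\Sh(i_M)$, unwinding the construction of $\Sh$ applied to a morphism: one evaluates $i_M$ at $k y_2 \oplus \bV$ and restricts to $y_2$-weight $1$. The source becomes $M(k y_2 \oplus \bV)^{[1]}$, i.e., $\Sh(M)$ realized via $y_2$, and the map is multiplication by $y_1$ (the new variable used by $i_M$ itself). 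The two realizations of $\Sh(M)$ are canonically identified via $\Phi(M)$ applied to the renaming $y_1 \leftrightarrow y_2$ on the source.

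To finish, for $z \in M(k y_1 \oplus \bV)^{[1]}$ one computes
\[
\tau(i_{\Sh(M)}(z)) \;=\; \Phi(M)(\sigma)(y_2 \cdot z) \;=\; \sigma(y_2) \cdot \Phi(M)(\sigma)(z) \;=\; y_1 \cdot \widetilde z,
\]
where $\widetilde z = \Phi(M)(\sigma)(z)$ is the transport of $z$ to the $y_2$-realization; the middle equality uses the naturality of the $R$-module structure with respect to $\sigma$. The right-hand side is exactly $\Sh(i_M)(\widetilde z)$, which under the identification of realizations is $\Sh(i_M)$ applied to the same abstract element as $z$. The main obstacle is purely notational, namely keeping careful track of which copy of $k$ each map uses as its ``new variable''; once both maps are expressed as multiplication by specific variables, the involution $\tau$ is forced to be the swap, and the verification reduces to the naturality of multiplication.
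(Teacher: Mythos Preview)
Your proposal is correct and is essentially the same argument as the paper's: both realize $\Sh^2(M)$ inside $M(k^2\oplus\bV)$, define $\tau$ via the swap of the two new basis vectors, and identify $i_{\Sh(M)}$ and $\Sh(i_M)$ with multiplication by the outer and inner new variable respectively. The only differences are cosmetic: the paper verifies the equivalent identity $\tau\circ\Sh(i)=i_{\Sh}$ and suppresses the bookkeeping about the two realizations of the source $\Sh(M)$ that you spell out explicitly.
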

\begin{proof}
The module $\Sh^2(M)$ can be identified as a submodule of $M(k^2 \oplus \bV)$. Under this identification, the map $\Sh(i_M)$ is given by multiplying with the variable introduced first (call it $y_1$), and $i_{\Sh(M)}$ is given by multiplying with the variable introduced second (call it $y_2$). We have an isomorphism of $M(k^2 \oplus \bV)$ induced by the isomorphism of $k^2 \oplus \bV$ which swaps the two basis vectors of the first factor and is identity on $\bV$. This {isomorphism} clearly restricts to a natural isomorphism of $\Sh^2(M)$; call this natural map $\tau$. We have 
\begin{displaymath}
(\tau_M \circ \Sh(i_M))(m) = \tau_M(y_1m) = y_2 m = i_{\Sh(M)}(m),
\end{displaymath}
as required.
\end{proof}

\begin{lemma} \label{lem:shiftofinduced}
    Let $V$ be a polynomial representation of $\GL$. We have isomorphisms
   \begin{displaymath}
   \Sh(R \otimes V) \cong (\Sh(R) \otimes V) \oplus (R \otimes \Sh(V)) \cong (R \otimes V) \oplus (R \otimes \Sh(V)).
   \end{displaymath}
   Furthermore, the natural map $i_{R \otimes V}$ is the identity map onto the first summand. Therefore, the module $\De(R\otimes V)$ is isomorphic to $R \otimes \Sh(V)$.
\end{lemma}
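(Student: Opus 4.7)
The plan is to compute $\Sh(R \otimes V)$ explicitly by evaluating the underlying polynomial functor at $k \oplus \bV$ and extracting the weight-one piece for the new $k^\star$-action. Writing $y_1$ for the newly introduced basis vector, we have $R(k \oplus \bV) = \lw(k \oplus \bV) = R \oplus y_1 R$ (both as a $\GL$-representation and as a $k^\star$-representation), so
\[
(R \otimes V)(k \oplus \bV) \;=\; R(k \oplus \bV) \otimes V(k \oplus \bV)
\]
decomposes; taking weight-one gives
\[
\Sh(R \otimes V) \;=\; (y_1 R \otimes V) \;\oplus\; (R \otimes \Sh(V)).
\]
The summand $y_1 R$ is, by construction, the weight-one subspace of $R(k \oplus \bV)$, i.e.\ it is $\Sh(R)$. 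Moreover, multiplication by $y_1$ gives an $R$-module isomorphism $R \xrightarrow{\sim} y_1 R$ (up to a Koszul sign that does not affect the module structure); this is the degree-wise content of Lemma~\ref{lem:shiftofsym} applied to each exterior power summand of $R$. Combining these two observations produces the claimed chain of isomorphisms $\Sh(R \otimes V) \cong (\Sh(R) \otimes V) \oplus (R \otimes \Sh(V)) \cong (R \otimes V) \oplus (R \otimes \Sh(V))$.

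Next I would trace through the definition of $i_{R \otimes V}$ from Section~\ref{ss:shiftdefn}: by construction it sends $r \otimes v$ to $y_1 \cdot (r \otimes v) = (y_1 r) \otimes v$, which lands entirely in the summand $y_1 R \otimes V$. Under the identification $R \otimes V \xrightarrow{\sim} y_1 R \otimes V$ from the previous step, this map is precisely the identity onto the first summand. It follows immediately that $\De(R \otimes V) = \coker(i_{R \otimes V}) \cong R \otimes \Sh(V)$.

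The main technical point to check is that the displayed decomposition is one of $R$-modules and not merely of $\GL$-representations. The Leibniz rule recorded in Section~\ref{ss:schurder} produces the decomposition at the level of polynomial functors, but we must additionally verify that the two summands are stable under the action of $R \subset R(k \oplus \bV)$, and that the induced $R$-action on $y_1 R$ matches the standard $R$-action on $R$ transported along $r \mapsto y_1 r$. Both facts are straightforward: elements of $R$ sit in weight zero for the new $k^\star$-action, so multiplication by $R$ preserves the weight-one subspace; and the skew-commutativity relation $r \cdot y_1 = (-1)^{|r|} y_1 \cdot r$ in $\lw(k \oplus \bV)$ yields the asserted $R$-linear isomorphism up to sign. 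Once this compatibility is verified, the computation of $i_{R \otimes V}$ and of $\De(R \otimes V)$ follows directly, and no further input is needed.
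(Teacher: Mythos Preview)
Your proposal is correct and follows essentially the same approach as the paper: the paper invokes the Leibniz rule for $\Sh$ to get the first isomorphism, cites Lemma~\ref{lem:shiftofsym} for $\Sh(R)\cong R$, and says the claim about $i_{R\otimes V}$ ``follows from the definition of $i$.'' You unpack each of these steps explicitly, and in addition you verify the $R$-module compatibility of the decomposition and address the Koszul sign, points the paper leaves implicit.
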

\begin{proof}
Firstly, it is easy to check that $i_R$ is an isomorphism (see Lemma~\ref{lem:shiftofsym}). Therefore $R \cong \Sh(R)$, giving us the second isomorphism in the statement of the lemma. For the first isomorphism, we use the fact that $\Sh$ is a categorical derivation. The claim about $i_{R \otimes V}$ follows from the definition of $i$. 

\end{proof}
\begin{corollary}\label{cor:dfrobzero}
    Let $W$ be a polynomial representation of $\GL$. Then $\De(R\otimes W^{(1)}) = 0$.
\end{corollary}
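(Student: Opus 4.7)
The plan is to reduce this immediately to the preceding lemma together with the vanishing property of the Schur derivative on Frobenius twists. By Lemma~\ref{lem:shiftofinduced} applied to the polynomial representation $V = W^{(1)}$, we have a canonical isomorphism $\De(R \otimes W^{(1)}) \cong R \otimes \Sh(W^{(1)})$. Thus it suffices to show that $\Sh(W^{(1)}) = 0$.

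For this, I would invoke the property of the Schur derivative recorded in Section~\ref{ss:schurder}, namely that $\Sh$ kills Frobenius twists: $\Sh(F^{(1)}) = 0$ for any polynomial functor $F$. Translating back through the equivalence $\Phi \colon \Rep^\pol(\GL) \to \Pol$, this says exactly that $\Sh(W^{(1)}) = 0$ for any polynomial representation $W$. The underlying reason, as noted in that section, is that $\Sh(M)$ is the weight $1$ subspace of the $k^\star$-action on $\Phi(M)(k \oplus \bV)$, while every weight occurring in a Frobenius-twisted representation is divisible by $p$, hence in particular not equal to $1$.

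Combining these two facts gives $\De(R \otimes W^{(1)}) \cong R \otimes 0 = 0$, as desired. There is no real obstacle here; this corollary is essentially a direct substitution into Lemma~\ref{lem:shiftofinduced} of the general observation that $\Sh$ annihilates Frobenius twists. The content of the statement is conceptual rather than technical: it records the failure, in positive characteristic, of the $\FI$-module principle that the generation degree of $\De(M)$ is $t_0(M) - 1$, since here $R \otimes W^{(1)}$ is a nonzero finitely generated module with $\De$ vanishing identically. This is precisely the phenomenon the paper then sets out to analyze in Section~\ref{ss:deltavanishing}.
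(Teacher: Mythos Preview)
Your proof is correct and follows exactly the same approach as the paper: both invoke Lemma~\ref{lem:shiftofinduced} to identify $\De(R\otimes W^{(1)})$ with $R\otimes \Sh(W^{(1)})$, and then use that $\Sh$ annihilates Frobenius twists because all weights of $W^{(1)}$ are divisible by $p$. The paper's version is terser, but the argument is the same.
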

\begin{proof}
All the weights of $W^{(1)}$ are divisible by $p$, so $\Sh(W^{(1)}) = 0$. The result now follows from Lemma~\ref{lem:shiftofinduced}.
\end{proof}
\begin{proof}[Proof of Proposition~\ref{prop:basicshift}]
\hfill
\begin{enumerate}[label=(\alph*)]
\item Assume $m$ is in the kernel of $i_M$, i.e., $y_1 m = 0$ in $\Sh(M)$. Viewing $\Sh(M)$ as a submodule of $M(k \oplus \bV)$, we see that the element $m \in M(\bV) \subset M(k \oplus \bV)$ is annihilated by $y_1 \in R(k \oplus \bV)$. Since $y_1$ and $m$ are disjoint, the element $m$ is torsion by Lemma~\ref{lem:torsiontrick}.
\item If $M$ is torsion-free, then by part (a), the map $i_M$ is injective. Now, suppose $M$ is not torsion-free; let $m$ be a torsion element and assume $m \in M(k^i)$ (i.e., it only uses the basis vectors $e_1, e_2, \ldots, e_i$). Let $n$ be the minimal integer for which $\fm^n m = 0$. Pick a nonzero element $\widetilde{m}$ in the submodule $\fm^{n-1} m$. Since $\fm^n m=0$, we have $\fm\widetilde{m} = 0$. So, in $M(k \oplus \bV)$, we also have that $y_1 \widetilde{m} = 0$, which implies that $\widetilde{m}$ is in the kernel of $i_M$, as required. 
\item If $M$ is torsion-free, then by part (a), the map $i_M$ is injective. Applying the exact functor $\Sh$, we see that the map $\Sh(i_M) \colon \Sh(M) \to \Sh^2(M)$ is also injective. By Lemma~\ref{lem:obvshiftlemma}, we have $i_{\Sh(M)} = \tau_M \circ \Sh(i_M)$, where $\tau_M$ is an involution of $\Sh^2(M)$. Therefore, the map $i_{\Sh(M)}$ is also injective, and so by part (b), the module $\Sh(M)$ is torsion-free.
\item The submodule $\Gamma(M)$ of torsion elements is finitely generated, and so supported in degrees $< n$ for some $n \in \bN$. We have the short exact sequence 
\begin{displaymath}
0 \to \Gamma(M) \to M \to M/\Gamma(M) \to 0. 
\end{displaymath}
Applying $\Sh^n$, we obtain an isomorphism $\Sh^n(M) \cong \Sh^n(M/\Gamma(M))$ as $\Sh^n(\Gamma(M)) = 0$. The module $\Sh^n(M/\Gamma(M))$ is torsion-free by part (c), and so $\Sh^n(M)$ is also torsion-free, as required.
\item {We prove the $r=1$ case which can be easily extended to all $r$ by the reader.} For a polynomial representation $W$ of degree $\le n$, the representation $\Sh(W)$ has degree $\le n-1$. 
Therefore, by Lemma~\ref{lem:shiftofinduced}, applying $\De$ to an induced module generated in degree $\le n$ results in an induced module generated in degree $\le n-1$. Now, for an $R$-module $M$, we have a surjection $R\otimes W \to M$, with $W$ a polynomial representation of degree $\le t_0(M)$. Since $\De$ is right exact, we have a surjection $R \otimes \De(W) \to \De(M)$, which implies that $t_0(\De(M)) \le t_0(M) - 1$, as required. 
\item This follows from Lemma~\ref{lem:shiftofinduced}, and induction on the length of the filtration of $M$.
\item This follows from Lemma~\ref{lem:shiftofinduced} and the fact that the Schur derivative of a finite length polynomial representation is also finite length. 
\item {We again prove the $r=1$ case with the easy generalization for all $r > 1$ left to the reader}. The module $\Sh(\De(M))$ is the cokernel of $\Sh(i_M)$, and $\De(\Sh(M))$ is the cokernel of $i_{\Sh(M)}$. These two maps only differ by an automorphism of the target, and so their cokernels are isomorphic.
\item We have the short exact sequence $0 \to \Gamma(M) \to M \to M/\Gamma(M) \to 0$. Applying the functor $\Sh$ to this short exact sequence, we get 
\begin{displaymath}
0 \to \Sh(\Gamma(M)) \to \Sh(M) \to \Sh(M/\Gamma(M)) \to 0.
\end{displaymath}
By part (c), the module $\Sh(M/\Gamma(M))$ is torsion-free, and therefore, the submodule $\Gamma(\Sh(M))$ is contained in the image of $\Sh(\Gamma(M))$. For the reverse containment, the image of $\Sh(\Gamma(M))$ is a torsion submodule of $\Sh(M)$, and so is contained in $\Gamma(\Sh(M))$.
\end{enumerate}
\end{proof}

\begin{lemma}\label{lem:handlingtorsion}
{   Let $0 \to L \to M \to N \to 0$ be a short exact sequence of $R$-modules. We have a six-term exact sequence
   \begin{displaymath}
       0 \to \bK(L) \to \bK(M) \to \bK(N) \to \De(L) \to \De(M) \to \De(N) \to 0
   \end{displaymath}
   where $\bK = \ker(\id \to \Sh)$.}
\end{lemma}
\begin{proof}
{
    This is just the snake lemma applied to the diagram}
\begin{displaymath}
\begin{tikzcd}
  0 \arrow[r] & L \arrow[d, "i_L"] \arrow[r] & M \arrow[d, "i_M"] \arrow[r] & N\arrow[d, "i_N"] \arrow[r] & 0 \\
  0 \arrow[r] & \Sh(L) \arrow[r] & \Sh(M) \arrow[r] & \Sh(N) \ar[r] & 0.
\end{tikzcd}
\end{displaymath}
\end{proof}
\begin{remark}
{It is not hard to see that the functor $\bK$ is isomorphic to $\bL^1 \De$, and $\bL^i \De$ vanishes for $i > 1$ (for $\FI$-modules, the analogous result can be found in \cite[Lemma~4.7]{ce17regularity}).}
\end{remark}
\begin{corollary}\label{cor:handlingtorsion}
Let $0 \to L \to M \to N \to 0$ be a short exact sequence of $R$-modules such that $N$ is torsion-free. We have a short exact sequence 
\begin{displaymath}
0 \to \De(L) \to \De(M) \to \De(N) \to 0
\end{displaymath}
\end{corollary}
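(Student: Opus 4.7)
The plan is to read off the statement from the six-term exact sequence in Remark~\ref{rmk:handlingtorsion}. That remark (which itself is a direct application of the snake lemma to the commutative diagram comparing the natural maps $i_L, i_M, i_N$ with the exact sequence $0 \to \Sh(L) \to \Sh(M) \to \Sh(N) \to 0$ obtained by applying the exact functor $\Sh$) yields the six-term exact sequence
\[
0 \to \bK(L) \to \bK(M) \to \bK(N) \to \De(L) \to \De(M) \to \De(N) \to 0,
\]
where $\bK = \ker(\id \to \Sh)$. So the only thing to do is to check that the connecting term $\bK(N)$ vanishes, which will immediately give exactness of $0 \to \De(L) \to \De(M) \to \De(N) \to 0$ at both ends.

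To kill $\bK(N)$, I would invoke Proposition~\ref{prop:basicshift}(b): since $N$ is torsion-free, the natural map $i_N \colon N \to \Sh(N)$ is injective, so $\bK(N) = \ker(i_N) = 0$. (Equivalently one could cite part~(a) of the same proposition: $\bK(N)$ is always torsion, hence zero when $N$ is torsion-free.) Substituting $\bK(N) = 0$ into the six-term sequence yields the claimed short exact sequence.

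There is no real obstacle here; the only point that requires any care is the justification of the six-term sequence itself, which I am taking from Remark~\ref{rmk:handlingtorsion}. If one wanted to avoid appealing to the remark, one could simply redo the snake-lemma argument explicitly: the exactness of $\Sh$ (noted in Section~\ref{ss:schurder}) ensures the bottom row of the diagram is short exact, and the snake lemma then produces the six-term sequence in the kernels and cokernels of $i_L$, $i_M$, $i_N$. Either way, the torsion-freeness of $N$ is used exactly once, to eliminate the kernel term $\bK(N)$.
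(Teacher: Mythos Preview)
Your proposal is correct and matches the paper's proof essentially verbatim: the paper also invokes Remark~\ref{rmk:handlingtorsion} for the six-term sequence and then kills $\bK(N)$ via Proposition~\ref{prop:basicshift}(a) (you cite part~(b), which is equivalent here, and even note~(a) as an alternative).
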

\begin{proof}
The module $\bK(N) = 0$ by Proposition~\ref{prop:basicshift}(a). The result now follows from {Lemma}~\ref{lem:handlingtorsion}.
\end{proof}
\subsection{Vanishing of \texorpdfstring{$\De$}{Delta}} \label{ss:deltavanishing}
This subsection contains the main new technical result of this paper. We classify the torsion-free $R$-modules for which $\De(M)$ vanishes. In characteristic zero, it is not hard to prove that for a nonzero $R$-module $M$, we have $t_0(\De(M)) = t_0(M) - 1$ \cite[Proposition~2.4]{ly17fi}. So if $\De(M) = 0$, then $M$ must be generated in degree zero and therefore induced. In positive characteristic we only have the inequality $t_0(\De(M)) \le t_0(M) - 1$. In this subsection, we prove that if $\De$ vanishes for a torsion-free module, then it is an extension of modules of the form $R \otimes W^{(1)}$. In particular, they are all semi-induced. We first prove this when the module $M$ is generated in one degree (Proposition~\ref{prop:dmzerogenonedeg}), and then use the short exact sequence $0 \to M^{<n} \to M \to M/M^{<n} \to 0 $ to deduce the result in general (Proposition~\ref{prop:dmzeroisflat}). 

The key lemma is:
\begin{lemma}\label{lem:weightsinfrob}
    Let $W$ be a polynomial representation of $\GL$ that is not Frobenius twisted. There exists a weight vector $w \in W$ of weight $\mu$ with $\mu_1 = 1$.
\end{lemma}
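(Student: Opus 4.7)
The plan is to prove the contrapositive: if no weight of $W$ has any entry equal to $1$, then $W$ is a Frobenius twist. This will imply the lemma since the weight set of $W$ is closed under the Weyl group $\fS_\infty \subset \GL$ (acting by permuting basis vectors), so any weight with an entry equal to $1$ can be permuted to have $\mu_1 = 1$.

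Since the weights of $W$ (as a set) are exactly the weights appearing in its composition factors, the hypothesis ``no weight with entry $1$'' descends: no weight of any composition factor $L_\lambda$ has an entry equal to $1$ either. Assuming the irreducible case, each $L_\lambda$ is Frobenius twisted, and hence $W$ itself is Frobenius twisted, using that the essential image of $(-)^{(1)}$ is closed under extensions in $\Rep^\pol(\GL)$ (all weights being divisible by $p$ is clearly preserved under extensions, and together with the irreducible case this characterizes Frobenius twists). So the work reduces to the irreducible case: given $L_\lambda$ not Frobenius twisted, produce a weight of $L_\lambda$ with an entry equal to $1$.

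Apply the Steinberg tensor product theorem to write $L_\lambda \cong L_{\lambda^0} \otimes L_{\lambda^1}^{(1)} \otimes \cdots \otimes L_{\lambda^n}^{(n)}$ with each $\lambda^i$ $p$-restricted. The hypothesis that $L_\lambda$ is not Frobenius twisted forces $\lambda^0 \ne 0$. Let $k$ be the largest index with $\lambda^0_k > 0$. Since $\lambda^0$ is $p$-restricted and $\lambda^0_{k+1}=0$, we have $\lambda^0_k < p$. The highest weight vector $v_{\lambda^0} \in L_{\lambda^0}$ has $h_{\alpha_k}$-weight $\lambda^0_k$ for the simple coroot of $\alpha_k = e_k - e_{k+1}$, so the cyclic submodule it generates under the corresponding $\mathrm{SL}_2$-subgroup is a quotient of the Weyl module of highest weight $\lambda^0_k < p$, hence irreducible of dimension $\lambda^0_k + 1$. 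In particular $f_{\alpha_k} v_{\lambda^0}$ is a nonzero weight vector of $L_{\lambda^0}$ with weight $\lambda^0 - \alpha_k = (\lambda^0_1, \ldots, \lambda^0_{k-1}, \lambda^0_k-1, 1, 0, \ldots)$, which has entry $1$ at position $k+1$.

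Finally, combine with the Frobenius twisted factors: a weight of $L_\lambda$ is a sum $\nu^0 + p\nu^1 + \cdots + p^n \nu^n$ where $\nu^i$ is a weight of $L_{\lambda^i}$. Take $\nu^0 = \lambda^0 - \alpha_k$, and for each $i \ge 1$ use $\fS_\infty$-invariance of the weight set of $L_{\lambda^i}$ to pick $\nu^i$ whose (finite) support is disjoint from $\{k+1\}$. The resulting weight of $L_\lambda$ then has entry exactly $1$ at position $k+1$, and a transposition in $\fS_\infty$ moves this to position $1$. The main technical obstacle is the third paragraph: ensuring $f_{\alpha_k}v_{\lambda^0}$ does not vanish in the simple quotient $L_{\lambda^0}$. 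This is precisely where $p$-restrictedness is used, via the simplicity of $\mathrm{SL}_2$-modules of highest weight less than $p$.
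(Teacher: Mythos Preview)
Your proof is correct and follows essentially the same route as the paper: reduce to an irreducible constituent, apply Steinberg to peel off the $p$-restricted factor $L_{\lambda^0}$ with $\lambda^0 \ne 0$, produce a weight with an entry equal to $1$ in $L_{\lambda^0}$, and combine it with weights of the Frobenius-twisted factors supported away from that coordinate. The paper simply asserts the existence of a weight vector in $L_{\lambda^0}$ with first coordinate $1$; your $\mathrm{SL}_2$-argument (using that $\lambda^0_k < p$ forces the relevant Weyl module to be simple, so $f_{\alpha_k} v_{\lambda^0} \ne 0$) supplies a clean justification for that step, which the paper leaves implicit.
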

\begin{proof}
    Since $W$ is not Frobenius twisted, at least one of the irreducible constituent of $W$ will not be Frobenius twisted. So we may assume that $W$ is an \textit{irreducible} polynomial representation that is not Frobenius twisted. By the Steinberg tensor product theorem, we have an isomorphism $W \cong L_{\lambda^0} \otimes L_{\lambda^1}^{(1)} \otimes L_{\lambda^2}^{(2)} \ldots \otimes L_{\lambda^r}^{(r)}$ for $p$-restricted partitions $\lambda^0, \lambda^1, \ldots, \lambda^r$ with $\lambda^0 \ne \emptyset$. 
    In $L_{\lambda^0}$, we can find a nonzero weight vector $v_0$ of weight $\alpha$ with $\alpha_1 = 1$ and we can find a weight vector $v_1$ of weight $\beta$ with $\beta_1 = 0$ in $L_{\lambda^1}^{(1)} \otimes L_{\lambda^2}^{(2)} \ldots \otimes L_{\lambda^r}^{(r)}$. The vector $ v_0 \otimes v_1$ is a weight vector in $W$, and the first component of its weight is $1$, as claimed. 
\end{proof}
\begin{lemma}\label{lem:dmzero}
    {Assume $M$ is a finitely generated $R$-module with $M_n = 0$ and $M_{n+1}$ not Frobenius twisted. Then $\De(M)_n \ne 0$. In particular, if $M$ is a finitely generated $R$-module generated in degree $n+1$ with $\De(M) = 0$, then $M_{n+1}$ is Frobenius twisted.}
\end{lemma}
\begin{proof}
    {Since the representation $M_{n+1}$ is not Frobenius twisted, by Lemma~\ref{lem:weightsinfrob}, we can find a vector $v$ of weight $(1, \lambda)$ in $M_n$. In $M(k \oplus \bV)$, let $w = g \cdot v$ where $g$ swaps the newly introduced basis vector, say $f$, with $e_1$ (and fixes the other $e_i$). The vector $w$ lies in $\Sh(M)$ as it has weight $(1, 0, \lambda)$ for the action of {$\GL(k \oplus \bV)$}. Furthermore, the element $w$ has degree $n$ so it is not in the image of the natural map $M \to \Sh(M)$ (as $M_n = 0$), which implies that the image of $w$ is nonzero in $\De(M)$, as required. The second part of the statement follows easily since for a module generated in degree $n+1$, we have $M_n = 0$.}
\end{proof}
\begin{proposition}\label{prop:dmzerogenonedeg}
    Let $M$ be a torsion-free $R$-module generated in degree $n$ such that $\De(M) = 0$. The natural map $R \otimes M_n \to M$ is an isomorphism, i.e., $M$ is an induced module. 
\end{proposition}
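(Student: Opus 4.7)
The plan is to consider the natural surjection $\varphi\colon R \otimes M_n \to M$ (surjective by Nakayama's lemma since $M$ is generated in degree $n$) and to show that its kernel $K$ is zero. By Lemma~\ref{lem:dmzero}, the representation $M_n$ is Frobenius twisted, so we may write $M_n = W^{(1)}$. By Corollary~\ref{cor:dfrobzero}, $\De(R \otimes W^{(1)}) = 0$. Because $M$ is torsion-free, Corollary~\ref{cor:handlingtorsion} applied to the short exact sequence $0 \to K \to R \otimes W^{(1)} \to M \to 0$ yields an exact sequence $0 \to \De(K) \to \De(R \otimes W^{(1)}) \to \De(M) \to 0$, forcing $\De(K) = 0$. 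Moreover $K$ is torsion-free as a submodule of the induced module $R \otimes W^{(1)}$, and $K_n = 0$ since $\varphi$ restricts to the identity on the degree-$n$ piece.

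Suppose for contradiction that $K \ne 0$, and let $m$ be the smallest integer with $K_m \ne 0$; by the previous paragraph, $m > n$. Applying Remark~\ref{rmk:degdefrob} with the role of $n$ played by $m-1$: since $K_{m-1} = 0$ and $\De(K)_{m-1} = 0$, the representation $K_m$ must be Frobenius twisted; otherwise one could produce a weight vector in $K_m$ with first coordinate $1$ and use the swap element from the proof of Lemma~\ref{lem:dmzero} to exhibit a nonzero element of $\Sh(K)_{m-1}$. In particular, every weight of the nonzero polynomial representation $K_m$ is divisible by $p$.

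The contradiction is obtained by a weight analysis inside the ambient space $(R \otimes W^{(1)})_m = \lw^{m-n}(\bV) \otimes W^{(1)}$. Every weight of this representation has the form $\alpha + p\beta$, where $\alpha$ is a $0$/$1$-vector with exactly $m - n$ nonzero entries (a weight of $\lw^{m-n}(\bV)$) and $\beta$ is a weight of $W$. Such a vector is divisible by $p$ in every coordinate only when $\alpha = 0$, i.e.\ only when $m = n$. Since $m > n$, no weight of $\lw^{m-n}(\bV) \otimes W^{(1)}$ lies in $p\bZ^{\infty}$, and so no nonzero subrepresentation can have all weights divisible by $p$. This contradicts the conclusion of the previous paragraph, so $K = 0$ and $\varphi$ is an isomorphism. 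The main obstacle, conceptually, is identifying the correct reductions—using Corollary~\ref{cor:handlingtorsion} to transport $\De$-vanishing to $K$, and Remark~\ref{rmk:degdefrob} to pin down the bottom degree of $K$—after which the $p$-divisibility combinatorics closes the argument cleanly.
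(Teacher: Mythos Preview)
Your proof is correct and mirrors the paper's argument: reduce to showing the kernel $K$ vanishes, use Corollary~\ref{cor:handlingtorsion} and Corollary~\ref{cor:dfrobzero} to obtain $\De(K)=0$, and derive a contradiction at the minimal nonzero degree of $K$ via Remark~\ref{rmk:degdefrob}. The only cosmetic difference is in the final step: the paper invokes the Steinberg tensor product theorem to see that no irreducible constituent of $\lw^{m-n}(\bV)\otimes W^{(1)}$ is Frobenius twisted, whereas you reach the same conclusion by the (slightly more elementary) observation that no weight of $\lw^{m-n}(\bV)\otimes W^{(1)}$ is divisible by $p$ when $m>n$.
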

\begin{proof}
    The irreducible components of $R$ in positive degree are $p$-restricted representations of $\GL$ (in degree $i$, it is the $i$-th exterior power $\lw^i(\bV)$ which is irreducible with highest weight $(1^i)$). Therefore, if $W$ is an irreducible representation of $\GL$, then $R_n \otimes W^{(1)}$ is also irreducible by the Steinberg tensor product theorem (Theorem~\ref{thm:steinberg}), and in particular, if $n>0$, it is not Frobenius twisted. It follows that for a finite length polynomial representation $W$, the irreducible components of $R \otimes W^{(1)}$ are not Frobenius twisted in degrees $ > \deg(W^{(1)})$.
    
    By the assumptions on $M$, we see that $M_n$ is Frobenius twisted by Lemma~\ref{lem:dmzero}. Consider the natural map $\phi \colon R \otimes M_n \to M$. The map $\phi$ is surjective, and $K = \ker(\phi)$ is zero in degrees $\le n$. We have to prove that $K = 0$. First, we claim that $\De(K) = 0$. Indeed, since $M$ is torsion-free, by Corollary~\ref{cor:handlingtorsion}, we have a short exact sequence 
     \begin{displaymath}
    0 \to \De(K) \to \De(R \otimes M_n) \to \De(M) \to 0.
    \end{displaymath} 
    By Corollary~\ref{cor:dfrobzero}, we have $\De(R \otimes M_n) = 0$ and so $\De(K) = 0$, as claimed. Now, suppose $K$ is nonzero, and let $r$ be the minimal degree in which $K$ is nonzero (i.e., $K_r \ne 0$ and $K_i = 0$ for $i < r$). Note $r > n$ and so $K_r$ is not Frobenius twisted by the first paragraph. {Therefore, by Lemma~\ref{lem:dmzero}, we see that $\De(K)$ must be nonzero, which is a contradiction. So the map $\phi$ is an isomorphism, as required.}
\end{proof}

\begin{lemma}\label{lem:torsionfreequotient}
    Let $M$ be a torsion-free $R$-module with $t_0(M) = n$ such that $\De(M)$ is semi-induced and $\De(M/M^{<n}) = 0$. Then $M/M^{<n}$ is torsion-free.
\end{lemma}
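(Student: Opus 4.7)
The plan is to show that $\bK(N) := \ker(i_N \colon N \to \Sh(N))$ vanishes, where $L := M^{<n}$ and $N := M/L$; this is equivalent to $N$ being torsion-free by Proposition~\ref{prop:basicshift}(b). First I would apply the snake lemma to the short exact sequence $0 \to L \to M \to N \to 0$ with respect to the natural transformation $i \colon \id \to \Sh$. Both $L$ and $M$ are torsion-free (since $L \subset M$ is a submodule of a torsion-free module), so $\bK(L) = \bK(M) = 0$ by Proposition~\ref{prop:basicshift}(b); combined with the hypothesis $\De(N) = 0$, this extracts the four-term exact sequence
\[ 0 \to \bK(N) \to \De(L) \to \De(M) \to 0. \]

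The hypothesis that $\De(M)$ is semi-induced would enter via Proposition~\ref{prop:flatequalssemi}: $\De(M)$ is flat, so $\Tor_1^R(\De(M), k) = 0$. Tensoring the sequence above with $k$ over $R$ therefore stays left-exact, yielding an injection $\bK(N) \otimes_R k \hookrightarrow \De(L) \otimes_R k$. The remainder of the proof is a two-sided estimate in the grading. On one side, $L = M^{<n}$ is generated in degrees strictly less than $n$, so $t_0(L) \le n-1$, and Proposition~\ref{prop:basicshift}(e) gives $t_0(\De(L)) \le n-2$; therefore $(\De(L) \otimes_R k)_i = 0$ for all $i \ge n-1$, and so $(\bK(N) \otimes_R k)_i = 0$ in that range. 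On the other side, $\bK(N)$ is a submodule of $N$, which is generated in degree $n$, so $N_i = 0$ for $i < n$, and hence $(\bK(N) \otimes_R k)_i = 0$ for $i < n$.

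These two ranges together cover all integers $i$, so $\bK(N) \otimes_R k = 0$. Since $\bK(N) \subset N$ is finitely generated by noetherianity (Theorem~\ref{thm:noetherianity}), Nakayama's lemma (Lemma~\ref{lem:nakayama}) concludes $\bK(N) = 0$, and $N$ is torsion-free. The subtlety I anticipate as the main obstacle is identifying the correct way to exploit the semi-induced hypothesis on $\De(M)$: the idea is to translate it into flatness and hence the vanishing of $\Tor_1^R(\De(M), k)$, which lets the generation-degree bound on $\De(L)$ be pushed down to the submodule $\bK(N)$. Once that translation is spotted, the rest of the argument is a clean pincer between the generation degree of $\De(L)$ and the supporting degrees of $N$.
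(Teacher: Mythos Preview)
Your proof is correct and follows essentially the same approach as the paper: both extract the short exact sequence $0 \to \bK(N) \to \De(M^{<n}) \to \De(M) \to 0$ from the snake lemma, use $\Tor_1^R(\De(M),k)=0$ to push the generation-degree bound down to $\bK(N)$, and conclude with the degree pincer against the support of $N$. One minor point: the appeal to noetherianity is unnecessary (and the lemma does not assume $M$ finitely generated)---the graded Nakayama of Lemma~\ref{lem:nakayama} already applies to the bounded-below module $\bK(N)$ without any finiteness hypothesis.
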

\begin{proof}
    As in Lemma~\ref{lem:handlingtorsion}, we have the short exact sequence 
    \begin{displaymath}
    0 \to \bK(M/M^{<n}) \to \De(M^{<n}) \to \De(M) \to 0.
    \end{displaymath} 
    Applying $\Tor$ to this gives us the exact sequence
    \begin{displaymath}
    \Tor_1^R(\De(M), k) \to \Tor_0^R(\bK(M/M^{<n}),k) \to \Tor_0^R(M^{<n},k).
    \end{displaymath}
    However, since $\De(M)$ is semi-induced, the module $\Tor_1^R(\De(M), k) = 0$, which implies that 
    \begin{displaymath}
    t_0(\bK(M/M^{<n})) \leq t_0(M^{<n}) < n.
    \end{displaymath}
    But $\bK(M/M^{<n})$ is supported only in degrees $\ge n$ (as it is a submodule of $M/M^{<n}$), and so must be zero. Therefore, the natural map $M/M^{<n} \to \Sh(M/M^{<n})$ is injective, which implies that the module $M/M^{<n}$ is torsion-free by Proposition~\ref{prop:basicshift}(b).
\end{proof}
We can now prove our main result.
\begin{proposition}\label{prop:dmzeroisflat}
Let $M$ be a finitely generated torsion-free $R$-module such that $\De(M) = 0$. Then $M$ is semi-induced.
\end{proposition}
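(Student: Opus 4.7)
The plan is to induct on the generation degree $n = t_0(M)$. The base case $M = 0$ is trivial, and when $n = 0$ the module $M$ is generated in a single degree, so Proposition~\ref{prop:dmzerogenonedeg} directly gives that $M$ is induced. For the inductive step I would filter $M$ via the short exact sequence
$$0 \to M^{<n} \to M \to M/M^{<n} \to 0,$$
aiming to show that both $M^{<n}$ and $M/M^{<n}$ are semi-induced and then conclude via Corollary~\ref{cor:semises}.

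The first job is to propagate the vanishing of $\De$ to both ends of the filtration. Since $\De$ is right exact, applying it to the sequence above and using $\De(M) = 0$ immediately forces $\De(M/M^{<n}) = 0$. The submodule $M^{<n}$ is torsion-free (being a submodule of the torsion-free module $M$), so Lemma~\ref{lem:torsionfreequotient} applies: its hypothesis that $\De(M)$ be semi-induced is trivially satisfied because $\De(M) = 0$, and we have just verified $\De(M/M^{<n}) = 0$. This yields that $M/M^{<n}$ is torsion-free as well. Now that the quotient is known to be torsion-free, Corollary~\ref{cor:handlingtorsion} gives the short exact sequence
$$0 \to \De(M^{<n}) \to \De(M) \to \De(M/M^{<n}) \to 0,$$
and since the middle term vanishes we obtain $\De(M^{<n}) = 0$.

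With both ingredients in hand, the conclusion is essentially bookkeeping. The quotient $M/M^{<n}$ is a torsion-free module generated in degree $n$ with $\De = 0$, so Proposition~\ref{prop:dmzerogenonedeg} shows it is induced; the submodule $M^{<n}$ is torsion-free with $\De = 0$ and strictly smaller generation degree, so the inductive hypothesis shows it is semi-induced. Corollary~\ref{cor:semises} then glues the two pieces together into a semi-induced extension $M$.

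I do not anticipate a serious obstacle beyond what is already encapsulated in the preparatory lemmas. The genuinely delicate phenomenon—that truncating at the top generation degree can in principle create new torsion, an obstacle that does not appear in the analogous $\FI$-module arguments of Li--Yu—is exactly what Lemma~\ref{lem:torsionfreequotient} was designed to neutralize, by comparing the generation degree of $\De(M^{<n})$ against the support of $\bK(M/M^{<n})$. Once the quotient is known to be torsion-free, Corollary~\ref{cor:handlingtorsion} and Proposition~\ref{prop:dmzerogenonedeg} deliver the rest of the argument immediately.
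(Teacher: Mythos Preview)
Your proposal is correct and follows essentially the same route as the paper: induct on $t_0(M)$, use right-exactness of $\De$ to get $\De(M/M^{<n})=0$, invoke Lemma~\ref{lem:torsionfreequotient} to make the quotient torsion-free, apply Proposition~\ref{prop:dmzerogenonedeg} to the quotient, then use Corollary~\ref{cor:handlingtorsion} to force $\De(M^{<n})=0$ and finish by induction. The only cosmetic difference is that you explicitly cite Corollary~\ref{cor:semises} for the final gluing, which the paper leaves implicit.
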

\begin{proof}
    We proceed by induction on the generation degree of $M$.
    When $t_0(M) = 0$, this follows from Proposition~\ref{prop:dmzerogenonedeg}.
    Now, assume that $t_0(M) = n > 0$. Since $\De$ is right exact, we see that $\De(M/M^{<n}) = 0$. Therefore, by Lemma~\ref{lem:torsionfreequotient}, the module $M/M^{<n}$ is torsion-free, and so by Proposition~\ref{prop:dmzerogenonedeg}, the module $M/M^{<n}$ is semi-induced. By Corollary~\ref{cor:handlingtorsion}, we have a short exact sequence 
    \begin{displaymath}
    0 \to \De(M^{<n}) \to \De(M) \to \De(M/M^{<n}) \to 0.
    \end{displaymath}
    As $\De(M)=0$, we have that $\De(M^{<n}) = 0$, which implies that $M^{<n}$ is semi-induced (by induction) and therefore, so is $M$. 
\end{proof}
We finally note a corollary {(of Lemma~\ref{lem:torsionfreequotient})} that we will use in the proof of the shift theorem.
\begin{corollary}
 \label{cor:dqlowdeg}
     Let $M$ be a finitely generated torsion-free $R$-module with $t_0(M) = n$ such that $\De(M)$ is semi-induced with $t_0(\De(M)) \le \max(-1, n - 2)$. Then $M/M^{<n}$ is semi-induced. 
 \end{corollary}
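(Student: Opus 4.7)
My plan is to reduce the statement to Proposition~\ref{prop:dmzeroisflat} applied to $M/M^{<n}$. This requires verifying that $M/M^{<n}$ is torsion-free and that $\De(M/M^{<n}) = 0$; the bulk of the work will be the degree squeeze giving the vanishing of $\De(M/M^{<n})$.

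First I would apply the snake lemma (as in Remark~\ref{rmk:handlingtorsion}) to the short exact sequence $0 \to M^{<n} \to M \to M/M^{<n} \to 0$. Since $M$ is torsion-free, so is its submodule $M^{<n}$, and Proposition~\ref{prop:basicshift}(a) then gives $\bK(M) = \bK(M^{<n}) = 0$. The snake lemma thus yields in particular a surjection $\De(M) \twoheadrightarrow \De(M/M^{<n}) \to 0$.

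Next comes the squeeze. On one hand, $M/M^{<n}$ is supported in degrees $\geq n$, so $\Sh(M/M^{<n})$, and hence its quotient $\De(M/M^{<n})$, is supported in degrees $\geq n-1$. On the other hand, the surjection above combined with the hypothesis $t_0(\De(M)) \leq \max(-1, n-2)$ forces $\De(M/M^{<n})$ to be generated in degrees $\leq n-2$. Since a nonzero module cannot be generated in degrees strictly below its support, we conclude $\De(M/M^{<n}) = 0$.

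With this vanishing in hand, Lemma~\ref{lem:torsionfreequotient} applies---its three hypotheses, namely $M$ torsion-free, $\De(M)$ semi-induced, and $\De(M/M^{<n}) = 0$, are all in place---so $M/M^{<n}$ is torsion-free. Proposition~\ref{prop:dmzeroisflat} then concludes that $M/M^{<n}$ is semi-induced. The main obstacle is precisely the degree squeeze; the bound $\max(-1, n-2)$ on $t_0(\De(M))$ is tuned exactly to leave $\De(M/M^{<n})$ no room to be nonzero, with everything else being a direct invocation of results already established in the section.
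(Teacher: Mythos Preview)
Your proof is correct and follows essentially the same route as the paper: establish the degree squeeze to force $\De(M/M^{<n})=0$, invoke Lemma~\ref{lem:torsionfreequotient} for torsion-freeness, and then conclude semi-inducedness. Two cosmetic differences: the paper obtains the surjection $\De(M)\twoheadrightarrow\De(M/M^{<n})$ directly from right exactness of $\De$ rather than via the snake lemma (your $\bK$-computations are correct but unnecessary here), and the paper finishes with Proposition~\ref{prop:dmzerogenonedeg} (since $M/M^{<n}$ is generated in the single degree $n$) rather than the more general Proposition~\ref{prop:dmzeroisflat}.
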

 \begin{proof}
     Since the functor $\De$ is right exact, we have \begin{displaymath}
     t_0(\De(M/M^{<n})) \le t_0(\De(M)) < n-1.
     \end{displaymath}
     Since $M/M^{<n}$ is supported in degrees $\ge n$, the module $\Sh(M/M^{<n})$ is supported in degrees $\ge n-1$, and so $\De(M/M^{<n})$ is also supported in degrees $\geq n-1$. Therefore, the above inequality implies that $\De(M/M^{<n}) = 0$. Now, by Lemma~\ref{lem:torsionfreequotient}, the module $M/M^{<n}$ is torsion-free and so by Proposition~\ref{prop:dmzerogenonedeg}, the module $M/M^{<n}$ is semi-induced.
 \end{proof}

\subsection{Semi-induced subquotients of semi-induced modules}\label{ss:subofsemi}
We prove some lemmas bounding the generation degree of semi-induced subquotients of semi-induced modules. The only idea used in this subsection is that $R(k^n)$, the exterior algebra on $k^n$, is supported in degrees $0$ through $n$.

\begin{lemma}
	Let $M$ be an $R$-module. Then for all $n$, we have $M(k^n)_i = 0$ for $i > n + t_0(M)$. Furthermore, if $M$ is semi-induced, then $M(k^n)_{n + t_0(M)}$ is nonzero for all sufficiently large $n$.
\end{lemma}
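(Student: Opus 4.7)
The plan is to treat the two assertions in turn. For the vanishing bound, the strategy is to reduce to an induced module via Nakayama's lemma and then exploit the fact that $R(k^n) = \lw(k^n)$ is concentrated in degrees $0$ through $n$. Writing $d = t_0(M)$, Lemma~\ref{lem:nakayama} produces a surjection $R \otimes V \twoheadrightarrow M$ with $V$ a polynomial representation supported in degrees $\le d$. Since evaluation of a strict polynomial functor at a finite-dimensional vector space is exact (and compatible with the polynomial-degree grading), we obtain a surjection $(R(k^n) \otimes V(k^n)) \twoheadrightarrow M(k^n)$, so it suffices to show the source vanishes in degrees $> n+d$. This is immediate from the decomposition
\[
(R \otimes V)(k^n)_i \;=\; \bigoplus_{j \le d} \lw^{i-j}(k^n) \otimes V_j(k^n),
\]
since $\lw^{i-j}(k^n) = 0$ whenever $i - j > n$.

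For the nonvanishing statement, I would isolate the induced module sitting on top of the semi-induced filtration of $M$. By Proposition~\ref{prop:flatequalssemi}, semi-inducedness gives $\Tor^R_1(M,k) = 0$, so Lemma~\ref{lem:subofsemi} applies: the quotient $M/M^{<d}$ is an induced module of the form $R \otimes W$, where $W = (M/M^{<d})_d$ sits in polynomial degree $d$. Because $M$ is not generated by elements of degree strictly less than $d$, the representation $W$ is nonzero.

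The final step is to read off the degree $n+d$ component from the short exact sequence $0 \to M^{<d} \to M \to R \otimes W \to 0$ after evaluating at $k^n$. Since $t_0(M^{<d}) < d$, applying the first part of the lemma to $M^{<d}$ forces $M^{<d}(k^n)_{n+d} = 0$, and therefore
\[
M(k^n)_{n+d} \;\cong\; (R \otimes W)(k^n)_{n+d} \;=\; \lw^n(k^n) \otimes W(k^n) \;\cong\; W(k^n).
\]
A nonzero homogeneous polynomial representation $W$ of degree $d$ has some irreducible constituent $L_\lambda$ with $|\lambda| = d$, and $L_\lambda(k^n) \ne 0$ as soon as $n \ge \ell(\lambda)$; hence $W(k^n)$ is nonzero for all sufficiently large $n$. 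There is no serious obstacle here: the only things to check carefully are that evaluation at $k^n$ preserves exactness and the grading, and that Lemma~\ref{lem:subofsemi} really does give the clean induced description of the top quotient, which is what powers the second half of the argument.
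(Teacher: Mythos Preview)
Your proof is correct and follows the same approach the paper has in mind: the paper's own proof is the single sentence ``This is clear since $R$ evaluated at $k^n$ is supported in degrees $\le n$,'' and your argument simply unpacks this, using Nakayama for the first assertion and Lemma~\ref{lem:subofsemi} to isolate the top induced quotient for the second. The level of detail you give (in particular, the explicit identification $M(k^n)_{n+d} \cong W(k^n)$ and the observation that $W(k^n)\ne 0$ once $n$ dominates the lengths of the partitions appearing in $W$) is a welcome expansion of what the paper leaves implicit.
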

\begin{proof}
    This is clear since $R$ evaluated at $k^n$ is supported in degrees $\le n$.
\end{proof}

\begin{corollary}\label{corollary:subsemi}
Let $F$ be a semi-induced module and $Z$ be a submodule of $F$ such that $Z/Z^{< t_0(Z)}$ is semi-induced. Then $t_0(Z) \leq t_0(F)$.
\end{corollary}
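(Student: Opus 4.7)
The plan is to argue by the two parts of the preceding lemma: the first part gives an upper bound on the degrees where $F(k^m)$ lives, while the second gives a concrete nonvanishing in top degree for a semi-induced module. I will sandwich $Z$ between $F$ and the semi-induced quotient $Z/Z^{<t_0(Z)}$, and compare these two pieces of information.

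Set $n = t_0(Z)$. Since $t_0(Z) = n$, the quotient $Z/Z^{<n}$ is nonzero, generated in degree $n$, hence $t_0(Z/Z^{<n}) = n$. By hypothesis $Z/Z^{<n}$ is semi-induced, so the ``furthermore'' clause of the lemma applied to $Z/Z^{<n}$ produces an integer $m_0$ such that for every $m \geq m_0$,
\begin{displaymath}
(Z/Z^{<n})(k^m)_{m+n} \;\neq\; 0.
\end{displaymath}

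Next I will transfer this nonvanishing up to $F$. Evaluation at $k^m$ corresponds, via the equivalence $\Phi$, to applying the polynomial functor $\Phi(-)$ to $k^m$; concretely, it just extracts the subspace of weight vectors whose weights are supported on the first $m$ coordinates. In particular, the functor $(-)(k^m)$ is exact on $\Rep^\pol(\GL)$. Applying it to the surjection $Z \twoheadrightarrow Z/Z^{<n}$ and to the inclusion $Z \hookrightarrow F$, I obtain a surjection $Z(k^m) \twoheadrightarrow (Z/Z^{<n})(k^m)$ and an inclusion $Z(k^m) \hookrightarrow F(k^m)$. In degree $m+n$ the former surjection forces $Z(k^m)_{m+n} \neq 0$, and then the inclusion yields $F(k^m)_{m+n} \neq 0$ for every $m \geq m_0$.

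Finally, the first part of the lemma applied to $F$ says $F(k^m)_i = 0$ for $i > m + t_0(F)$. The nonvanishing $F(k^m)_{m+n} \neq 0$ therefore implies $m+n \leq m + t_0(F)$, i.e.\ $n \leq t_0(F)$, which is the desired inequality. The argument has no real obstacle beyond keeping track of which module plays which role; the only substantive input is the lemma above and the exactness of evaluation at $k^m$, both already in hand.
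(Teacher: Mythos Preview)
Your proof is correct and follows essentially the same approach as the paper's proof: use the ``furthermore'' clause of the preceding lemma on the semi-induced quotient $Z/Z^{<t_0(Z)}$ to get nonvanishing of $Z(k^m)$ in degree $m+t_0(Z)$ for large $m$, then use the first clause of the lemma applied to $F$ to bound this degree by $m+t_0(F)$. Your version is slightly more explicit about the exactness of evaluation at $k^m$, but the argument is the same.
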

\begin{proof}
    For sufficiently large $n$, the module $Z(k^n)$ is nonzero in degree $n + t_0(Z)$ as this is true for $Z/Z^{< t_0(Z)}$ by the previous lemma. Since $Z$ is a submodule of $F$, we have $n + t_0(Z) \leq n + t_0(F)$ for large $n$ by the previous lemma, giving us the required inequality.
\end{proof}

\subsection{Proof of the shift theorem}\label{ss:pfshift}
\begin{lemma}\label{lem:t1lesst0}
    Let $M$ be a torsion-free $R$-module such that $\De(M)$ is semi-induced. Then $t_1(M) \leq t_0(M)$.
\end{lemma}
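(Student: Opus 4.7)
My plan is to proceed by induction on $n := t_0(M)$. For the base case $n = 0$, the bound $t_0(\De(M)) \le t_0(M) - 1 = -1$ from Proposition~\ref{prop:basicshift}(e) forces $\De(M) = 0$, and Proposition~\ref{prop:dmzeroisflat} then shows $M$ is semi-induced, so $t_1(M) = -1 \le 0$.

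For the inductive step with $n \ge 1$, the central goal is to establish that $M/M^{<n}$ is semi-induced. Granted this, the long exact sequence of $\Tor_*^R(-,k)$ applied to the short exact sequence $0 \to M^{<n} \to M \to M/M^{<n} \to 0$, together with the flatness of $M/M^{<n}$ (via Proposition~\ref{prop:flatequalssemi}), yields an isomorphism $\Tor_1^R(M,k) \cong \Tor_1^R(M^{<n},k)$. The submodule $M^{<n}$ is torsion-free, satisfies $t_0(M^{<n}) \le n - 1$, and has $\De(M^{<n})$ semi-induced by Corollary~\ref{cor:handlingtorsion} (applicable since $M/M^{<n}$ is torsion-free) combined with Proposition~\ref{prop:basicshift}(f) and Corollary~\ref{cor:semises}(b). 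The inductive hypothesis applied to $M^{<n}$ then gives $t_1(M^{<n}) \le n - 1$, whence $t_1(M) \le n - 1 \le n$.

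It remains to show $M/M^{<n}$ is semi-induced. When $t_0(\De(M)) \le n - 2$, this is exactly Corollary~\ref{cor:dqlowdeg}. The main obstacle is the boundary case $t_0(\De(M)) = n - 1$, in which $\De(M/M^{<n})$ may be nonzero in degree $n - 1$, where it equals $\Sh((M/\fm M)_n)$. My strategy here is to exploit the semi-induced structure of $\De(M)$ together with the Steinberg tensor product theorem (Theorem~\ref{thm:steinberg}): in the spirit of Lemma~\ref{lem:dmzero}, a weight vector in $(M/\fm M)_n$ whose first coordinate equals $1$ would, after applying a suitable $\GL$-element swapping $e_1$ with the new basis vector $y_1$ of $k \oplus \bV$, produce a class in $\De(M)$ whose weight and Steinberg structure should be incompatible with the semi-induced filtration. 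This should force the minimal generators of $M$ in top degree to be Frobenius twisted, making $\Sh((M/\fm M)_n) = 0$ and reducing the boundary case to the argument of Corollary~\ref{cor:dqlowdeg} via Lemma~\ref{lem:torsionfreequotient} and Proposition~\ref{prop:dmzerogenonedeg}. Making this contradiction precise in the presence of nontrivial extensions inside $\De(M)$ will be the principal technical difficulty.
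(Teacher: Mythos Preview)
Your inductive scheme is different from the paper's argument, and the boundary case contains a genuine gap: the conclusion you aim for there is simply false. Take $M = R \otimes L_\lambda$ with $|\lambda| = n$ and $L_\lambda$ \emph{not} Frobenius twisted. Then $M$ is torsion-free, $\De(M) = R \otimes \Sh(L_\lambda)$ is (semi-)induced with $t_0(\De(M)) = n-1$, so you are squarely in your boundary case. Here $M^{<n} = 0$, so $(M/\fm M)_n = L_\lambda$, and $\Sh((M/\fm M)_n) = \Sh(L_\lambda) \ne 0$. Thus $\De(M/M^{<n}) = \De(M) \ne 0$, and the top-degree generators are \emph{not} Frobenius twisted. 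No Steinberg-type weight argument can produce a contradiction with the semi-induced structure of $\De(M)$, because there is none. Your intermediate goal ``$M/M^{<n}$ is semi-induced'' is in fact true (it will follow from Proposition~\ref{prop:liftingsemi}, which \emph{uses} the present lemma), but you cannot reach it via $\De(M/M^{<n}) = 0$, and Lemma~\ref{lem:torsionfreequotient} and Proposition~\ref{prop:dmzerogenonedeg} give you nothing without that vanishing.

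The paper avoids this trap by not attempting to analyze $M/M^{<n}$ at all. Instead it takes a semi-induced cover $0 \to Z \to F \to M \to 0$ with $t_0(F) = t_0(M)$, so that $t_1(M) \le t_0(Z)$ from the long exact sequence; the task becomes bounding $t_0(Z)$. Since $M$ is torsion-free, Corollary~\ref{cor:handlingtorsion} gives $0 \to \De(Z) \to \De(F) \to \De(M) \to 0$ with all three terms semi-induced (Corollary~\ref{cor:semises}). If $t_0(\De(Z)) = t_0(Z) - 1$, the inclusion $\De(Z) \subset \De(F)$ already forces $t_0(Z) \le t_0(F) = t_0(M)$. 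If $t_0(\De(Z)) < t_0(Z) - 1$, Corollary~\ref{cor:dqlowdeg} makes $Z/Z^{<t_0(Z)}$ semi-induced, and then the degree-support argument of Section~\ref{ss:subofsemi} (Corollary~\ref{corollary:subsemi}) bounds $t_0(Z) \le t_0(F)$. The point is that the case split happens on $Z$, where the ``boundary'' branch is trivial, rather than on $M$, where it is not.
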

\begin{proof}
     Let $F$ be a semi-induced module surjecting onto $M$ such that $t_0(F) = t_0(M)$. We have an exact sequence 
     \begin{displaymath}
     0 \to Z \to F \to M \to 0. 
     \end{displaymath}
     The long exact sequence of $\Tor_{\bullet}^R(-, k)$ yields:
    \begin{displaymath}
    0 \to \Tor_1^R(M, k) \to \Tor_0^R(Z, k) \to \Tor_0^R(F, k) \to \Tor_0^R(M, k) \to 0. 
    \end{displaymath}
    Therefore, we have the inequality $t_1(M) \le t_0(Z)$. So it suffices to show that $t_0(Z) \le t_0(M)$.
    
    Since $M$ is torsion-free, we also obtain a short exact sequence 
    \begin{displaymath}
    0 \to \De(Z) \to \De(F) \to \De(M) \to 0
    \end{displaymath}
    by Corollary~\ref{cor:handlingtorsion}. The module $\De(F)$ is semi-induced by Proposition~\ref{prop:basicshift}(f), and $\De(M)$ is semi-induced by assumption. So by Corollary~\ref{cor:semises}, the module $\De(Z)$ is also semi-induced.
    
    We know $t_0(\De(Z)) \le t_0(Z) - 1$ by Proposition~\ref{prop:basicshift}(e). If $t_0(\De(Z)) < t_0(Z) - 1$, then $Z/Z^{< t_0(Z)}$ is semi-induced by Corollary~\ref{cor:dqlowdeg} (note that $Z$ is torsion-free being a submodule of $F$) and therefore, by Corollary~\ref{corollary:subsemi}, we have $t_0(Z) \leq t_0(F) = t_0(M)$. 
    
    Instead, if $t_0(\De Z) = t_0(Z) - 1$, then from the short exact sequence above, we have the inequality 
    \begin{displaymath}
    t_0(\De(Z)) = t_0(Z) - 1 \le t_0(\De(F)) \le t_0(F) - 1 = t_0(M) - 1,
    \end{displaymath}
    or $t_0(Z) \le t_0(M)$. 
\end{proof}

\begin{proposition}\label{prop:liftingsemi}
	Let $M$ be a finitely generated torsion-free $R$-module such that $\De(M)$ is semi-induced. Then $M$ is also semi-induced.
\end{proposition}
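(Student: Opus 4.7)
My plan is to proceed by induction on the generation degree $n := t_0(M)$. The base case $n \le 0$ is immediate: either $M = 0$, or else Proposition~\ref{prop:basicshift}(e) forces $\De(M) = 0$, in which case $M$ is semi-induced by Proposition~\ref{prop:dmzeroisflat}. For the inductive step with $n > 0$, the strategy is to split $M$ via the short exact sequence
\[
0 \to M^{<n} \to M \to M/M^{<n} \to 0,
\]
show that both ends are semi-induced, and glue via Corollary~\ref{cor:semises}(a).

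First I would establish directly that $M/M^{<n}$ is semi-induced. The idea is to exploit that $M/M^{<n}$ is generated in the single degree $n$: the cover $R \otimes (M/M^{<n})_n \twoheadrightarrow M/M^{<n}$ has kernel supported in degrees $\ge n+1$, and combining this with the flatness of induced modules and the long exact sequence of $\Tor$ forces $\Tor_1^R(M/M^{<n}, k)$ to be supported in degrees $\ge n+1$. On the other hand, Lemma~\ref{lem:t1lesst0} gives $t_1(M) \le n$, and $\Tor_0^R(M^{<n}, k)$ is supported in degrees $\le n-1$. Feeding these into the long exact sequence of $\Tor$ for the displayed sequence sandwiches $\Tor_1^R(M/M^{<n}, k)$ between an incoming map from something in degrees $\le n$ and an outgoing map to something in degrees $\le n-1$; both have degree supports disjoint from $\{n+1, n+2, \ldots\}$, forcing $\Tor_1^R(M/M^{<n}, k) = 0$. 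Proposition~\ref{prop:flatequalssemi} then yields that $M/M^{<n}$ is semi-induced.

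Next I would verify the inductive hypothesis for $M^{<n}$: it is torsion-free as a submodule of $M$, and $t_0(M^{<n}) \le n-1$. Since $M/M^{<n}$ is now known to be semi-induced, hence torsion-free, Corollary~\ref{cor:handlingtorsion} produces a short exact sequence
\[
0 \to \De(M^{<n}) \to \De(M) \to \De(M/M^{<n}) \to 0.
\]
Both $\De(M)$ (by hypothesis) and $\De(M/M^{<n})$ (by Proposition~\ref{prop:basicshift}(f)) are semi-induced, so Corollary~\ref{cor:semises}(b) forces $\De(M^{<n})$ to be semi-induced as well. The inductive hypothesis then gives $M^{<n}$ semi-induced, and Corollary~\ref{cor:semises}(a) applied to the original sequence finishes the argument.

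I expect the main obstacle to be the first step, showing $M/M^{<n}$ is semi-induced. The subtlety is that we have only the weak bound $t_1(M) \le n$ from Lemma~\ref{lem:t1lesst0}, not the vanishing $\Tor_1^R(M, k) = 0$, so Lemma~\ref{lem:subofsemi} is not directly applicable. The resolution is the degree-support argument sketched above, which leverages the rigid lower bound on the degree support of $\Tor_1^R(M/M^{<n}, k)$ (coming from its generation in a single degree $n$) to squeeze it to zero using only the weaker information on $t_1(M)$.
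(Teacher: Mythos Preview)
Your proposal is correct and follows essentially the same route as the paper: induction on $t_0(M)$, base case via Proposition~\ref{prop:dmzeroisflat}, then for $n>0$ show $M/M^{<n}$ is semi-induced using Lemma~\ref{lem:t1lesst0} and the $\Tor$ long exact sequence, then handle $M^{<n}$ via Corollary~\ref{cor:handlingtorsion}, Proposition~\ref{prop:basicshift}(f), Corollary~\ref{cor:semises}(b), and induction. The only cosmetic difference is that your degree-support argument for $\Tor_1^R(M/M^{<n},k)$ (lower bound $\ge n+1$ from the single-degree cover, upper bound $\le n$ from the sandwich) is exactly the content of Lemma~\ref{lem:relationsinlowdeg}, which the paper cites directly rather than unfolding.
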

\begin{proof}
	We induct on $t_0(M)$. When $t_0(M) = 0$, the module $\De(M) = 0$, and so $M$ is semi-induced by Proposition~\ref{prop:dmzeroisflat}. Now, assume $t_0(M) = n > 0$. 
	We have the short exact sequence
	\begin{displaymath}
	0 \to M^{<n} \to M \to M/M^{<n} \to 0.
	\end{displaymath}
	So it suffices to prove that $M^{<n}$ and $M/M^{<n}$ are semi-induced.
	
	We first show that $M/M^{<n}$ is semi-induced. The long exact sequence of $\Tor_{\bullet}^R(-, k)$ associated to the above short exact sequence gives us 
	\begin{displaymath}
    \Tor_1^R(M, k) \to \Tor_1^R(M/M^{<n}, k) \to \Tor_0^R(M^{<n}, k)
    \end{displaymath}
	from which we see that $t_1(M/M^{<n}) \leq \max(t_1(M), t_0(M^{<n})) = \max(t_1(M), n-1) \leq n$. For the last step, we use Lemma~\ref{lem:t1lesst0} to get $t_1(M) \leq t_0(M) = n$. So by Lemma~\ref{lem:relationsinlowdeg}, the module $M/M^{<n}$ is semi-induced.
	
    We proceed to show that $M^{<n}$ is also semi-induced. Since $M/M^{<n}$ is semi-induced, it is torsion-free, and therefore, we have a short exact sequence
    \begin{displaymath}
    0 \to \De(M^{<n}) \to \De(M) \to \De(M/M^{<n}) \to 0
    \end{displaymath}
    by Corollary~\ref{cor:handlingtorsion}. By Proposition~\ref{prop:basicshift}(f), we have that $\De(M/M^{<n})$ is semi-induced, and so from the short exact sequence above, we see that $\De(M^{<n})$ is also semi-induced by Corollary~\ref{cor:semises}. Since $t_0(M^{<n}) < n$, by the induction hypothesis, it now follows that $M^{<n}$ is semi-induced, as required. 
\end{proof}

\begin{theorem}[Shift Theorem]
Let $M$ be a finitely generated $R$-module. We can find an $L \in \bN$ such that for all $l > L$, the module $\Sh^l(M)$ is semi-induced.
\end{theorem}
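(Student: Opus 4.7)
The plan is to prove the shift theorem by induction on $t_0(M)$, using Proposition~\ref{prop:liftingsemi} as the workhorse: once the derivative $\De$ of a torsion-free module is known to be semi-induced, the module itself is semi-induced. Since $\De$ decreases the generation degree by at least one (Proposition~\ref{prop:basicshift}(e)) and $\Sh$ commutes with $\De$ (Proposition~\ref{prop:basicshift}(h)), an inductive descent on $t_0$ should reduce the problem to the $\De(M) = 0$ case, which is handled by Proposition~\ref{prop:dmzeroisflat}. Torsion is dealt with by first shifting enough times to land in the torsion-free locus via Proposition~\ref{prop:basicshift}(d).

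For the base case, suppose $t_0(M) \le 0$. Then by Proposition~\ref{prop:basicshift}(e) we have $t_0(\De(M)) \le -1$, i.e., $\De(M) = 0$. Choose $N$ large enough that $\Sh^l(M)$ is torsion-free for all $l > N$ (Proposition~\ref{prop:basicshift}(d)). Since $\Sh$ and $\De$ commute, $\De(\Sh^l(M)) = \Sh^l(\De(M)) = 0$, so Proposition~\ref{prop:dmzeroisflat} gives that $\Sh^l(M)$ is semi-induced.

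For the inductive step, assume the theorem holds for all finitely generated $R$-modules of generation degree strictly less than $n$, and let $M$ have $t_0(M) = n > 0$. The module $\De(M)$ is finitely generated (Proposition~\ref{prop:basicshift}(g)) with $t_0(\De(M)) \le n-1$, so by induction there exists $L_1$ such that $\Sh^l(\De(M))$ is semi-induced for every $l > L_1$. Choose $L_2$ so that $\Sh^l(M)$ is torsion-free for $l > L_2$ (Proposition~\ref{prop:basicshift}(d)), and set $L = \max(L_1, L_2)$. For $l > L$, the module $\Sh^l(M)$ is finitely generated and torsion-free, and $\De(\Sh^l(M)) = \Sh^l(\De(M))$ is semi-induced. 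Proposition~\ref{prop:liftingsemi} then concludes that $\Sh^l(M)$ is semi-induced, completing the induction.

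Essentially all of the real work sits in Proposition~\ref{prop:liftingsemi} and, upstream of it, in the classification of torsion-free modules with $\De = 0$ via the Steinberg tensor product theorem (Section~\ref{ss:deltavanishing}). Once those tools are available, the shift theorem itself is a short formal induction; the genuine obstacle was the characteristic-$p$ phenomenon that $\De$ does not strictly decrease the generation degree, which had to be absorbed into the structural results rather than into the induction.
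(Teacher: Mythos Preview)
Your proof is correct and follows essentially the same approach as the paper's own proof: induction on $t_0(M)$, first shifting into the torsion-free range via Proposition~\ref{prop:basicshift}(d), then using $\Sh^l(\De(M)) \cong \De(\Sh^l(M))$ together with Proposition~\ref{prop:liftingsemi} (and Proposition~\ref{prop:dmzeroisflat} in the base case). The only cosmetic difference is that the paper phrases the torsion-free reduction once at the outset, whereas you track separate constants $L_1, L_2$; the underlying argument is identical.
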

\begin{proof}
We follow the proof of \cite[Theorem~3.13]{ly17fi} and proceed by induction on $t_0(M)$. It suffices to prove the result for $\Sh^r(M)$ for $r \gg 0$, and so we may additionally assume $M$ is torsion-free by Proposition~\ref{prop:basicshift}(d). When $t_0(M) = 0$, the module $\De(M)=0$ and so $M$ is semi-induced by Proposition~\ref{prop:liftingsemi}. Now, assume $t_0(M) = n > 0$. We have a short exact sequence $0 \to M \to \Sh(M) \to \De(M) \to 0$ with $t_0(\De(M)) < n$. By induction, for $l\gg 0$ the module $\Sh^{l}(\De(M))$ is semi-induced. By Proposition~\ref{prop:basicshift}{(h)} we have $\Sh^{l}(\De(M)) \cong \De(\Sh^{l}(M))$, and so by Proposition~\ref{prop:liftingsemi} we have that $\Sh^l(M)$ is semi-induced, as required. 
\end{proof}

\section{Structure of \texorpdfstring{$\Mod_R^{\fgen}$}{fg-Rmod}}\label{s:fgrmod}
In this section, we use the shift theorem to study the structure of finitely generated $R$-modules.
\subsection{Generators of the derived category}\label{ss:gendbmodr}
In this subsection, we show that the derived category is generated by the torsion and flat $R$-modules. Using this, we prove that all finitely generated $R$-modules have finite regularity. We first note a useful corollary of the shift theorem. {Throughout this section, we let $\rD^b_{\fgen}(\Mod_R)$ denote the bounded derived category of $\Mod_R$ with finitely generated cohomology.}

\begin{theorem}[Resolution Theorem]\label{thm:resthm}
Let $M$ be a finitely generated $R$-module. We have a chain complex of $R$-modules
\begin{displaymath}
0 \to M \to P^0 \to P^1 \to \ldots \to P^r \to 0
\end{displaymath}
satisfying the following properties:
\begin{itemize}
\item each $P^i$ is a finitely generated semi-induced module with $t_0(P^i) \le t_0(M) - i$,
\item $r \leq t_0(M)$, and
\item the cohomology of this complex is torsion (and so supported in finitely many degrees).
\end{itemize}
{Furthermore, given a map of $R$-modules $f \colon M \to N$, we can find complexes $M \to P^{\bullet}$ and $N \to Q^{\bullet}$ satisfying the above properties, and a map of complexes $\tilde{f}$ extending $f$.}
\end{theorem}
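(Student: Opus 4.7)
The plan is to induct on $t_0(M)$, using the Shift Theorem to embed $M$ modulo its torsion submodule into a semi-induced module and then recursively resolving the cokernel, which has strictly smaller generation degree. Concretely, choose $l$ large enough that $\Sh^l(M)$ is semi-induced (by the Shift Theorem), and enlarge $l$ further so that $\Sh^l$ annihilates the torsion submodule $\Gamma(M)$ (which is finite length by Lemma~\ref{lem:torisfinlen} and supported in bounded degrees, so killed by a sufficiently high shift since $\Sh$ lowers polynomial degree by $1$). By iterated application of Proposition~\ref{prop:basicshift}(b),(c), the natural map $M/\Gamma(M) \to \Sh^l(M/\Gamma(M)) = \Sh^l(M)$ is injective, so the kernel of the natural map $M \to \Sh^l(M)$ equals $\Gamma(M)$, giving a four-term exact sequence
\[
0 \to \Gamma(M) \to M \to \Sh^l(M) \to \De_l(M) \to 0,
\]
with $\Sh^l(M)$ semi-induced, finitely generated, and satisfying $t_0(\Sh^l(M)) \le t_0(M)$ (Proposition~\ref{prop:basicshift}(f),(g)), and with $\De_l(M)$ finitely generated of generation degree at most $t_0(M) - 1$ by Proposition~\ref{prop:basicshift}(j).

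Applying the theorem inductively to $\De_l(M)$ produces a resolution $0 \to \De_l(M) \to Q^0 \to \cdots \to Q^{r'} \to 0$ satisfying the claimed properties, with $r' < t_0(M) - 1$. I splice by setting $P^0 = \Sh^l(M)$ and $P^i = Q^{i-1}$ for $i \ge 1$, with connecting differential $P^0 \to P^1$ given by the composite $\Sh^l(M) \twoheadrightarrow \De_l(M) \hookrightarrow Q^0$. The degree bound $t_0(P^i) \le t_0(M) - i$ follows from $t_0(Q^{i-1}) \le t_0(\De_l(M)) - (i-1) \le t_0(M) - i$, and the length bound $r = r' + 1 < t_0(M)$ is automatic. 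A short diagram chase identifies the cohomology of the spliced complex: at $M$ it is $\Gamma(M)$; at $P^0$ it is $\ker(\De_l(M) \to Q^0)$, which is the $H^0$ of the inductive resolution; and at $P^i$ for $i \ge 1$ it coincides with the corresponding cohomology of $Q^\bullet$. All three are torsion, by construction or by induction. For the functoriality assertion, given $f \colon M \to N$ pick a single $l$ working for both modules; naturality of $\Sh^l$ and of the cokernel construction supplies $\De_l(f) \colon \De_l(M) \to \De_l(N)$, and the inductive functoriality yields a compatible map of resolutions which splices to the desired map of complexes $\tilde f$.

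The main obstacle I anticipate is the careful torsion bookkeeping throughout the recursion: each stage combines the torsion submodule of the current module with the inductively produced torsion cohomology of the resolution of $\De_l$, and one must verify that these assemble to genuinely torsion modules and not to torsion extensions of non-torsion modules. A secondary subtlety concerns the base case $t_0(M) \le 0$: any torsion-free module with $t_0 = 0$ is already semi-induced (by Proposition~\ref{prop:dmzeroisflat}, since $\De(M) = 0$ in that range), so after stripping torsion the base case reduces to a trivial one-term resolution, and the stated length inequality should be interpreted permissively (e.g., allowing $r = 0$ when $t_0(M) = 0$).
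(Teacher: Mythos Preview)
Your proposal is correct and follows essentially the same route as the paper: induct on $t_0(M)$, set $P^0 = \Sh^l(M)$ for $l$ large via the Shift Theorem, recurse on the cokernel $\De_l(M)$, and splice. The only cosmetic difference is that you enlarge $l$ so that the kernel of $M \to \Sh^l(M)$ is exactly $\Gamma(M)$, whereas the paper simply notes (via Proposition~\ref{prop:basicshift}(a)) that this kernel is torsion without identifying it precisely; your remark about the strict inequality $r < t_0(M)$ in the base case applies equally to the paper's statement.
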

\begin{proof}
We prove the result by induction on $n = t_0(M)$. If $n = 0$, then by the shift theorem, for $l \gg0$, the module $\Sh^l(M)$ is semi-induced. Set $P^0 = \Sh^l(M)$. The kernel of the natural map $M \to P^0$ is torsion by Proposition~\ref{prop:basicshift}(a), and this map must be surjective as the cokernel is generated in degrees $< t_0(M) = 0$ by Proposition~\ref{prop:basicshift}{(e)}. Therefore, the complex $0 \to M \to P^0 \to 0$ is the required complex for $M$. Now assume $n > 0$. Again, by the shift theorem, the module $\Sh^l(M)$ is semi-induced for $l \gg 0$ (and $t_0(\Sh^l(M)) = t_0(M)$). Set {$P^0 = \Sh^l(M)$}. As before, the kernel of the natural map from $M \to P^0$ is torsion. Furthermore, the cokernel of this map is generated in degrees $\le n-1$ by Proposition~\ref{prop:basicshift}{(e)}. By induction, we have a complex $0 \to N \to P^1 \to P^2 \ldots P^n \to 0$ satisfying all the conditions of the theorem for $N$. It is easy to check that the complex $0 \to M \to P^0 \to P^1 \to \ldots \to P^n \to 0$ satisfies all the desired properties. {The last statement is clear upon noting that if $M \to P^{\bullet}$ satisfies all the required properties, then so does the complex $M \to \Sh(P^{\bullet})$.}
\end{proof}

\begin{lemma}\label{lem:dbfgtorsion}
    Given a bounded complex $C$ of finitely generated $R$-modules with finite length cohomology, there exists a complex of finite length $R$-modules $D$ that is quasi-isomorphic to $C$. 
\end{lemma}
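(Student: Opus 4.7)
My plan is to induct on the cohomological amplitude $n(C) := b - a + 1$, where $a, b$ are the extreme degrees in which $H^\bullet(C)$ is nonzero (with the convention $n(C) = 0$ when $C$ is acyclic). The base cases are immediate: take $D = 0$ if $C$ is acyclic, and $D = H^a(C)[-a]$ if $H^\bullet(C)$ is concentrated in a single degree $a$; the latter is quasi-isomorphic to $C$ via iterated good truncation.

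For the inductive step with $n(C) \ge 2$, I first replace $C$ by $\tau_{\le b} C$, which is quasi-isomorphic to $C$ and still has finitely generated terms by Theorem~\ref{thm:noetherianity}. So I may assume $C^i = 0$ for $i > b$ and hence $H^b(C) = C^b / d^{b-1}(C^{b-1})$. Define the chain map $\pi \colon C \to H^b(C)[-b]$ as the projection in degree $b$ and zero elsewhere, and set $K = \ker \pi$. Then $K^i = C^i$ for $i \neq b$ and $K^b = d^{b-1}(C^{b-1})$, which is finitely generated by Theorem~\ref{thm:noetherianity}; the long exact sequence from $0 \to K \to C \to H^b(C)[-b] \to 0$ gives $H^i(K) = H^i(C)$ for $i < b$ and $H^b(K) = 0$. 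Thus $K$ is a bounded complex of finitely generated $R$-modules with $n(K) < n(C)$, so by induction $K \simeq D_K$ for some bounded complex $D_K$ of finite length $R$-modules.

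It remains to glue $D_K$ and $H^b(C)[-b]$ into a complex $D$ quasi-isomorphic to $C$. The distinguished triangle $K \to C \to H^b(C)[-b] \xrightarrow{+1}$ combined with $K \simeq D_K$ in $\rD^b(\Mod_R)$ identifies $C$ with the mapping cone of a class $\xi \in \Ext^{b+1}_{\Mod_R}(H^b(C), D_K)$. Since $H^b(C)$ and every $H^i(D_K)$ is finite length (hence torsion), property (Inj) for $\Mod_R^{\tors}$ (Lemma~\ref{lemma:propinj}) ensures that this $\Ext$ group agrees with the one computed inside $\Mod_R^{\tors}$, where by Lemma~\ref{lem:torisfinlen} every extension class is realized by a Yoneda extension of finite length modules. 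Splicing this finite length extension into $D_K$ produces the desired bounded complex $D$ of finite length $R$-modules.

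The main obstacle is this gluing step. A naive chain-map representative of the shifted connecting morphism $H^b(C)[-b-1] \to D_K$ typically fails to exist because $D_K$ has zero cohomology in degree $b+1$, so the extension data must be realized by lengthening $D_K$. Property (Inj) is precisely what guarantees that this lengthening can be carried out entirely within finite length modules, so that the resulting complex $D$ remains bounded with finite length terms.
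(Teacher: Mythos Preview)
Your approach is substantially different from the paper's and far more elaborate. The paper exploits the internal $\bN$-grading on $R$-modules: since the cohomology of $C$ is finite length, it is supported in grading-degrees $\le N$ for some $N$; letting $C^{>N}$ be the termwise subcomplex of elements of grading-degree $>N$, the quotient $D = C/C^{>N}$ is a bounded complex of finite length modules (each term is finitely generated and concentrated in finitely many grading-degrees) and $C \to D$ is visibly a quasi-isomorphism. That is the entire proof.

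Your inductive reduction by cohomological amplitude is fine up to the gluing step, but that step is not adequately justified. Property~(Inj) tells you $\rD^+(\Mod_R^{\tors}) \to \rD^+(\Mod_R)$ is fully faithful, so the class $\xi$ may indeed be computed in the torsion category; but you still need to represent $\xi$, and hence its cone, by a bounded complex of \emph{finite length} modules rather than arbitrary torsion ones. Your appeal to Lemma~\ref{lem:torisfinlen} does not give this: that lemma says finitely generated torsion modules are finite length, but to realize $\xi$ as a chain map you would resolve $D_K$ by injectives in $\Mod_R^{\tors}$, and those injectives are neither known to be finitely generated nor the resolution known to be bounded. (Note also that $D_K$ is a complex, so ``Yoneda extension'' is not quite the right picture.) What you would really need is that every finite length $R$-module admits a bounded injective resolution by finite length modules, and that is not available at this point in the paper. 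The grading truncation the paper uses sidesteps all of this, and in fact is the most natural way to repair your argument as well.
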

\begin{proof}
{Given an $R$-module $M$, we let $M^{>n}$ be the submodule of $M$ consisting of all elements of degree $> n$, and given a complex of $R$-modules $C$, we let $C^{>n}$ be the canonical subcomplex of $C$ with $(C^{>n})^i = (C^i)^{>n}$. Now, assume $C$ is a bounded complex of finitely generated $R$-modules and its cohomology is supported in degrees $< N$. We have a short exact sequence
\begin{displaymath}
    0 \to C^{>N} \to C \to C/C^{>N} \to 0
\end{displaymath}
with $C/C^{>N}$ a bounded complex of finite length modules. By assumption on the cohomology of $C$, the complex $C^{>N}$ is acyclic so the the complex $C$ is quasi-isomorphic to $C/C^{>N}$, as required.}
\end{proof}

\begin{proposition}\label{prop:triangle}
Given a finitely generated $R$-module $M$, there exists a distinguished triangle $T \to M \to F \to$ in $\rD^b_{\fgen}(\Mod_R)$, with $T$ quasi-isomorphic to a bounded complex of finite length $R$-modules, and $F$ quasi-isomorphic to a bounded complex of finitely generated semi-induced $R$-modules.
\end{proposition}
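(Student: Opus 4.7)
The plan is to read off the triangle directly from the Resolution Theorem. Apply Theorem~\ref{thm:resthm} to $M$ to obtain a complex
\[
0 \to M \to P^0 \to P^1 \to \cdots \to P^r \to 0
\]
with each $P^i$ finitely generated semi-induced and with torsion cohomology. Set $F = [P^0 \to P^1 \to \cdots \to P^r]$, regarded as a bounded complex concentrated in non-negative degrees; the map $M \to P^0$ then gives a morphism of complexes $M \to F$ (viewing $M$ as a complex in degree $0$). Completing this to a distinguished triangle in $\rD^b_{\fgen}(\Mod_R)$ yields
\[
T \longrightarrow M \longrightarrow F \longrightarrow T[1],
\]
where $T \simeq \mathrm{cone}(M \to F)[-1]$ is represented, up to shift, by the complex $[M \to P^0 \to P^1 \to \cdots \to P^r]$. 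This takes care of the semi-induced side: $F$ is by construction a bounded complex of finitely generated semi-induced $R$-modules.

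It remains to identify $T$ with a complex of finite length modules. Its representing complex consists of finitely generated $R$-modules, and its cohomology agrees (up to shift) with that of the resolution in Theorem~\ref{thm:resthm}, which is torsion. Since $\Mod_R$ is locally noetherian (Theorem~\ref{thm:noetherianity}), this cohomology is also finitely generated, hence torsion and finitely generated, hence finite length by Lemma~\ref{lem:torisfinlen}. Now Lemma~\ref{lem:dbfgtorsion} applies to the representing complex of $T$ and produces a quasi-isomorphic bounded complex of finite length $R$-modules; passing through the shift $[-1]$ preserves this property. Hence $T$ has the required form in $\rD^b_{\fgen}(\Mod_R)$.

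There is no substantive obstacle: the entire content of the proposition is already in Theorem~\ref{thm:resthm} and Lemma~\ref{lem:dbfgtorsion}. The only care needed is a bookkeeping one, namely to convert the resolution $M \to P^\bullet$ into a distinguished triangle via the shifted mapping cone and to verify that each object in the resulting triangle lies in $\rD^b_{\fgen}(\Mod_R)$; both are immediate from noetherianity and the form of the Resolution Theorem.
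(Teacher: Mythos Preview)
Your proof is correct and follows essentially the same approach as the paper: apply the Resolution Theorem to obtain $M \to P^\bullet$, take the mapping cone to get the distinguished triangle, and then invoke Lemma~\ref{lem:dbfgtorsion} on the (shifted) cone, whose cohomology is finite length. The only difference is cosmetic: you spell out why the cone has finite length cohomology via noetherianity and Lemma~\ref{lem:torisfinlen}, whereas the paper simply asserts it.
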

\begin{proof}
Let $M \to P^{\bullet}$ be a complex satisfying Theorem~\ref{thm:resthm}. We consider $M$ to be a complex supported in degree $0$, and let $j$ denote the map of complexes $M \to P^{\bullet}$. We obtain a distinguished triangle 
\begin{displaymath}
M \to P^{\bullet} \to \cone(j) \to
\end{displaymath}
in $\rD^b_{\fgen}(\Mod_R)$. The complex $\cone(j)$ has finite length cohomology, so is quasi-isomorphic to a finite complex of finite length torsion $R$-modules $T$ by Lemma~\ref{lem:dbfgtorsion}. The rotated triangle $T[-1] \to M \to P^{\bullet} \to$ satisfies the requirements.
\end{proof}
\begin{theorem}\label{thm:gendbmodr}
    The category $\rD^b_{\fgen}(\Mod_R)$ is generated as a triangulated category by the modules $L_\lambda$ and $R \otimes L_\lambda$ for arbitrary $\lambda$.
\end{theorem}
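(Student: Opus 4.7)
The plan is to combine Proposition~\ref{prop:triangle} with two short dévissage arguments for the ``torsion half'' and the ``semi-induced half'' of a finitely generated $R$-module. Let $\cT$ denote the triangulated subcategory of $\rD^b_{\fgen}(\Mod_R)$ generated by $\{L_\lambda\}_\lambda$ and $\{R \otimes L_\lambda\}_\lambda$. Every object of $\rD^b_{\fgen}(\Mod_R)$ is represented by a bounded complex of finitely generated modules, and stupid truncation gives distinguished triangles $\sigma^{\le n} C^\bullet \to C^\bullet \to \sigma^{> n} C^\bullet \to$ whose outer terms are obtained from single modules (shifted) together with complexes of strictly smaller support. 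So by induction on the length of support, it suffices to show that every finitely generated $R$-module, placed in degree zero, lies in $\cT$.

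Given such an $M$, Proposition~\ref{prop:triangle} furnishes a distinguished triangle $T \to M \to F \to$ in $\rD^b_{\fgen}(\Mod_R)$ with $T$ quasi-isomorphic to a bounded complex of finite length modules and $F$ quasi-isomorphic to a bounded complex of finitely generated semi-induced modules. Since $\cT$ is triangulated it is enough to show $T \in \cT$ and $F \in \cT$, and a second application of the truncation argument reduces each of these to a single finite length module, respectively a single finitely generated semi-induced module.

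For a finite length $R$-module, note that a simple $R$-module $S$ is finitely generated so Nakayama's lemma (Lemma~\ref{lem:nakayama}) forces $\fm S \ne S$, hence $\fm S = 0$ by simplicity; therefore $S$ is a simple object of $\Rep^\pol(\GL)$ killed by $\fm$, i.e., $S \cong L_\lambda$ for some partition $\lambda$. Consequently any finite length $R$-module admits a finite composition series with quotients among the $L_\lambda$, and the associated short exact sequences give triangles showing inductively that every finite length module lies in $\cT$. For a finitely generated semi-induced module $F$, the defining filtration has associated graded pieces of the form $R \otimes V$ with $V$ a polynomial representation that we may take to be finite length (it is a graded piece of $F/\fm F$, which is a finitely generated polynomial representation). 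The short exact sequence triangles reduce to $R \otimes V \in \cT$. Because $R \otimes -$ is exact, a composition series of $V$ by simples $L_\lambda$ lifts to a finite filtration of $R \otimes V$ whose quotients are exactly the generators $R \otimes L_\lambda$, giving $R \otimes V \in \cT$.

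The deep input here is Proposition~\ref{prop:triangle} (which already packages the shift theorem and the resolution theorem); the remaining obstacle is bookkeeping the layered inductions. The only subtlety I anticipate is justifying finite-length-ness in the semi-induced step, which follows because finite generation of $F$ bounds the generation degrees of the induced layers and each graded piece of a finitely generated polynomial representation has finite length.
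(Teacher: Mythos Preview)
Your proof is correct and follows essentially the same approach as the paper: invoke Proposition~\ref{prop:triangle} to reduce to the torsion and semi-induced pieces, then d\'evissage each via finite filtrations with quotients $L_\lambda$ and $R\otimes L_\lambda$ respectively. The paper compresses your stupid-truncation reduction and the identification of simple $R$-modules into a sentence, while you spell these out; the one minor point is that your justification ``it is a graded piece of $F/\fm F$'' tacitly appeals to the canonical filtration from the proof of Proposition~\ref{prop:flatequalssemi}, but this is fine since you only need finite length, which already follows from each graded piece of a finitely generated $R$-module being finite length.
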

\begin{proof}
By Proposition~\ref{prop:triangle}, the smallest triangulated subcategory of $\rD^b_{\fgen}(\Mod_R)$ containing the torsion and semi-induced $R$-modules is all of $\rD^b_{\fgen}(\Mod_R)$. Any torsion $R$-module (resp.~semi-induced $R$-module) has a filtration where the successive quotients are $L_{\lambda}$ (resp.~$R\otimes L_{\lambda}$). So the modules $L_{\lambda}$ and $R\otimes L_{\lambda}$ generate $\rD^b_{\fgen}(\Mod_R)$, as required.
\end{proof}
We can now prove the theorem on finite regularity.
\begin{defn}
For an $R$-module $M$, we define the \textit{Castelnuovo--Mumford regularity} of $M$ to be the minimal integer $\rho$ such that $t_i(M) \le \rho + i$ for all $i$, or $\infty$ if no such integer exists.
\end{defn}
\begin{theorem}\label{thm:finreg2}
The Castelnuovo--Mumford regularity of a finitely generated $R$-module is finite.
\end{theorem}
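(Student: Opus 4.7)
The plan is to leverage Theorem~\ref{thm:gendbmodr}, which says that $\rD^b_{\fgen}(\Mod_R)$ is generated as a triangulated category by the simples $L_\lambda$ and the flat modules $R \otimes L_\lambda$. I first extend the notion of regularity to bounded complexes $C$ by setting $t_i(C) = \max\deg H^{-i}(C \otimes_R^{\mathbf{L}} k)$ and declaring $C$ to have finite regularity when $\sup_i(t_i(C) - i) < \infty$. This invariant agrees with the original definition on modules, shifts by $n$ when one shifts $C$ by $n$, and is subadditive on distinguished triangles via the long exact sequence of hyper-$\Tor$. Hence the class of complexes with finite regularity is closed under shifts, direct summands, and cones; since any object of $\rD^b_{\fgen}(\Mod_R)$ is obtained by finitely many such operations from finitely many of the generators $L_\lambda$ and $R \otimes L_\lambda$, it suffices to verify finite regularity on each of these generators.

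The free module $R \otimes L_\lambda$ is flat, so $t_i = -\infty$ for $i > 0$ while $t_0 = |\lambda|$, and its regularity equals $|\lambda|$. For the simple module $L_\lambda$, the key computation is a direct determination of $\Tor_i^R(L_\lambda, k)$ using the Koszul-type minimal free resolution $\cdots \to R \otimes \Div^i(\bV) \to \cdots \to R \otimes \bV \to R \to k \to 0$ of the residue field, which encodes the Koszul duality between the exterior algebra $\lw(\bV)$ and the divided power algebra $\Div(\bV)$ in characteristic $p$. Because $\fm$ annihilates $L_\lambda$, all differentials in $L_\lambda \otimes_R (R \otimes \Div^{\bullet}(\bV)) = L_\lambda \otimes \Div^{\bullet}(\bV)$ vanish, and we read off $\Tor_i^R(L_\lambda, k) \cong L_\lambda \otimes \Div^i(\bV)$, a polynomial representation homogeneous of degree $|\lambda| + i$. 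Therefore $t_i(L_\lambda) = |\lambda| + i$, and $L_\lambda$ has regularity exactly $|\lambda|$.

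The main obstacle is the justification of the Koszul resolution in our infinite-variable, positive-characteristic setting: one must identify the Koszul dual of $\lw(\bV)$ as the divided power algebra $\Div(\bV)$ (rather than $\Sym(\bV)$, as in characteristic zero) and verify the putative resolution is exact. A variant that sidesteps this explicit resolution is to use Proposition~\ref{prop:triangle} to reduce to bounded complexes of finite length modules, then use composition series to reduce to the simples $L_\lambda$; the vanishing-differential observation then still delivers the bound $t_i(L_\lambda) = |\lambda| + i$ and completes the argument.
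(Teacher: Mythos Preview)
Your proof is correct and follows essentially the same approach as the paper: reduce to the generators $L_\lambda$ and $R\otimes L_\lambda$ via Theorem~\ref{thm:gendbmodr}, handle the induced modules by flatness, and compute the regularity of $L_\lambda$ using the Koszul resolution of $k$. The paper phrases the last step as ``apply $-\otimes_k L_\lambda$ to the Koszul resolution of $k$ to get a flat resolution of $L_\lambda$,'' whereas you tensor the Koszul resolution with $L_\lambda$ over $R$ and observe the differentials vanish; these are the same computation, and your explicit identification of the terms as $R\otimes\Div^i(\bV)$ (so that $t_i(L_\lambda)=|\lambda|+i$) is exactly the content behind the paper's assertion that $\reg(k)=0$.
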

\begin{proof}
Let $\lambda$ be a partition. The induced module $R \otimes L_\lambda$ is flat, and so has regularity $|\lambda|$. The module $k \cong R/\fm$ has regularity $0$ as witnessed by the Koszul resolution (which is a flat resolution of $k$). Applying the functor $-\otimes_k L_\lambda$ to the Koszul resolution of $k$, we obtain a flat resolution of $L_\lambda$. Therefore, the module $L_\lambda$ has regularity $|\lambda|$. Since these two classes of modules generate the bounded derived category, every bounded complex has finite regularity, and in particular, so does every finitely generated module.
\end{proof}
\begin{remark}\label{rmk:cebounds}
Using Nagpal's shift theorem \cite{nag19vi}, Gan--Li showed that the regularity of a finitely generated $\VI$-module $M$ is at most $t_0(M) + t_1(M)$ \cite[Theorem~3.2]{gl20vi}. The results (and proofs) of Section~2 and Theorem~3.2 of loc.~cit.~also hold for $R$-modules. The only additional ingredient required for their proof is that for a finite length $R$-module $M$, we have $\reg(M) \le \max\deg(M)$. This can be proved using the Koszul complex. Therefore, by mimicking their approach, we obtain the bound $\reg(M) \le t_0(M) + t_1(M)$ for every finitely generated $R$-module.
\end{remark}

\subsection{The shift functor commutes with local cohomology}\label{ss:shiftlccommute}
In Proposition~\ref{prop:basicshift}(i), we showed that the shift functor commutes with the functor $\Gamma$. In this subsection, we show that the shift functor also commutes with the derived functors of $\Gamma$. This technical result will be used to prove several finiteness results for local cohomology. We first explain the setup.

Let $M$ be an $R$-module, and let $M \to I^\bullet$ and $\Sh(M) \to J^\bullet$ be injective resolution of $M$ and $\Sh(M)$ respectively. The identity map of $\Sh(M)$ induces a map of complexes $\Sh(I^\bullet) \to J^\bullet$ as $J^\bullet$ is an injective resolution. Applying $\Gamma$ and taking cohomology, we obtain a map $ H^i(\Gamma(\Sh(I^\bullet))) \to H^i(\Gamma(J^\bullet)) = \rR^i\Gamma(\Sh(M))$. We also have natural isomorphisms,
\begin{displaymath}
H^i(\Gamma(\Sh(I^\bullet))) \cong H^i(\Sh(\Gamma(I^\bullet))) \cong \Sh(H^i(\Gamma(I^\bullet))) = \Sh(\rR^i\Gamma(M)),
\end{displaymath}
where for the first isomorphism, we use that $\Sh$ commutes with $\Gamma$ by Proposition~\ref{prop:basicshift}(i), and for the second isomorphism, we use that $\Sh$ is an exact functor, and therefore commutes with taking cohomology.
So we get a natural map $F_{i,M} \colon \Sh(\rR^i\Gamma(M)) \to \rR^i\Gamma(\Sh(M))$.

\begin{proposition}\label{prop:shcommutesderivedgamma}
The map $F_i$ is an isomorphism for all $i \ge 0$.
\end{proposition}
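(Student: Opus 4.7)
The plan is to show that $\Sh$ preserves injective objects in $\Mod_R$, whence the proposition will follow from a standard derived-functor argument. Indeed, once $\Sh$ preserves injectives, for any injective resolution $M \to I^\bullet$ the complex $\Sh(I^\bullet)$ is a $\Gamma$-acyclic resolution of $\Sh M$ (using exactness of $\Sh$), so
\[
\rR^i\Gamma(\Sh M) = H^i(\Gamma \Sh I^\bullet) = H^i(\Sh \Gamma I^\bullet) = \Sh\, H^i(\Gamma I^\bullet) = \Sh\, \rR^i\Gamma(M),
\]
where the second equality uses Proposition~\ref{prop:basicshift}(i) and the third uses exactness of $\Sh$. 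Unwinding the construction, this composite is exactly $F_{i,M}$.

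To prove that $\Sh$ preserves injectives, I would exhibit an exact left adjoint. The candidate is the functor $T \colon \Mod_R \to \Mod_R$ given by $T(N) = \bV \otimes N$, with $R$ acting through the second factor and $\GL$ acting diagonally; it is exact because it is pointwise tensor with a $k$-vector space. The adjunction isomorphism $\Hom_{\Mod_R}(\bV \otimes N, M) \cong \Hom_{\Mod_R}(N, \Sh M)$ is constructed as follows. Given $\tilde\eta \colon \bV \otimes N \to M$, the map $\eta \colon N \to \Sh M$ sends $n \in N(V)$ to $\tilde\eta_{V \oplus k}(y_1 \otimes N(\iota)(n))$, where $\iota \colon V \hookrightarrow V \oplus k$ is the inclusion and $y_1$ is the new basis vector; a weight count places the image in $M(V \oplus k)^{[1]} = \Sh M(V)$. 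Conversely, from $\eta \colon N \to \Sh M$ one recovers $\tilde\eta_V(v \otimes n) = M(\phi_v)(\eta_V(n))$, where $\phi_v \colon V \oplus k \to V$ is the linear map $(w, c) \mapsto w + cv$. Naturality and compatibility with the $\GL$-action follow from functoriality, while $R$-equivariance is a direct consequence of the fact that $R$ acts on $M(V \oplus k)$ through the inclusion $R \hookrightarrow R(V \oplus k)$, which commutes with the weight grading.

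The hardest step to verify, and the main obstacle, is that $\tilde\eta_V(v \otimes n) = M(\phi_v)(\eta_V(n))$ is $k$-linear in $v$. Note that $v \mapsto \phi_v$ is not additive (for instance, $\phi_{v_1} + \phi_{v_2}$ sends $(w,c) \mapsto 2w + c(v_1+v_2)$), so for a general polynomial functor $M$ one would not expect $M(\phi_v)$ to be linear in $v$ on all of $M(V \oplus k)$. The rescue is that $\eta_V(n)$ lies in the weight-$1$ subspace: by polarization, $M(V \oplus k)^{[1]}$ may be identified with the value at $k$ of a bifunctor $M_{d-1,1}(V, -)$ which is \emph{linear} in its second argument, and the restriction of $v \mapsto M(\phi_v)$ to this subspace factors through this linear functor evaluated at the map $k \to V$, $1 \mapsto v$, hence is linear in $v$ as required.
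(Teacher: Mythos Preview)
Your approach is correct and genuinely different from the paper's. The adjunction $\Hom_{\Mod_R}(\bV\otimes N, M)\cong \Hom_{\Mod_R}(N,\Sh M)$ does hold: at the level of $\Pol$ it is the standard fact that tensoring with the identity functor is left adjoint to the Schur derivative, and your check that it respects the $R$-module structures (via $\lw(\phi_v)\circ\lw(\iota)=\id$) is the right one. Your handling of the linearity-in-$v$ issue is also sound: writing $\phi_v=(+)\circ(\id_V\oplus v)$ and using that $M_{n-1,1}(V,-)$ is a linear functor in its second slot shows $M(\phi_v)|_{\Sh M(V)}$ is linear in $v$. Since $\bV\otimes -$ is exact, $\Sh$ preserves injectives, and then $\Sh(I^\bullet)\to J^\bullet$ is a homotopy equivalence of injective resolutions, so $F_{i,M}$ is an isomorphism exactly as you say.

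By contrast, the paper proves the proposition by an inductive argument modeled on \cite{nag19vi} and \cite{dj16lc}: reduce to finitely generated torsion-free $M$, establish injectivity of $F_{i,M}$ via the four lemma applied to $0\to M\to I\to N\to 0$, then show $\Sh\kappa_l(M)\cong\kappa_l(\Sh M)$ for $\kappa_l=\ker[\rR^i\Gamma\to\Sh^l\rR^i\Gamma]$ and pass to the colimit over $l$. Your argument is shorter and more conceptual; the paper's argument is designed to be portable to settings (such as $\VI$-modules) where a left adjoint to the shift may not be available or not obviously exact, which is why the remark after the proof isolates exactly the axiomatic inputs used.
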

\begin{proof}
Our proof is an expanded version of the proof of \cite[Lemma~4.20]{nag19vi} and \cite[Proposition~A.3]{dj16lc}. We proceed by induction on $i$. For $i=0$, this {is just} Proposition~\ref{prop:basicshift}(h). Now, assume $i > 0$, and we know that $F_{i-1}$ is an isomorphism. We have to show that $F_i$ is an isomorphism.

Before we prove the result, we make two reductions. First, we may assume that $M$ is torsion-free. Indeed, given an arbitrary module $N$, we have the short exact sequence $0 \to \Gamma(N) \to N \to N/\Gamma(N) \to 0$. Using the long exact sequence of local cohomology and Corollary~\ref{cor:lcvanishingtorsion}, we see that for $i > 0$, the map $F_{i, N}$ is an isomorphism if and only if $F_{i, N/\Gamma(N)}$ is an isomorphism. 

Second, we may assume that $M$ is finitely generated. 
It is easy to see that the functors $\Gamma$ and $\Sh$ commute with filtered colimits. In a locally noetherian Grothendieck category, a filtered colimit of injective objects remains injective. Therefore, by \cite[Proposition~A.4]{gs18incmon}, the right derived functors of $\Gamma$ also commute with filtered colimits. So, for an arbitrary module $N$, we may write $F_{i, N}$ as the colimit of the maps $\{F_{i, N_{\alpha}}\}_{\alpha}$ where $\{N_\alpha\}_{\alpha}$ is the directed system of finitely generated submodules of $N$. Therefore, if we show $F_{i, M}$ is an isomorphism for finitely generated $M$, then $F_{i, N}$ will also be an isomorphism. So for the rest of the proof, we assume that $M$ is a finitely generated torsion-free $R$-module.

We first show that $F_{i,M}$ is injective. Consider a short exact sequence
\begin{displaymath}
0 \to M \to I \to N \to 0 
\end{displaymath}
where $I$ is an injective $R$-module. We obtain a commutative diagram
\[
\begin{tikzcd}
  \Sh(\rR^{i-1}\Gamma(I)) \arrow[r]\arrow[d] & \Sh(\rR^{i-1}\Gamma(N)) \arrow[d] \arrow[r] & \Sh(\rR^i \Gamma(M)) \arrow[d] \arrow[r] & \Sh(\rR^i\Gamma(I)) = 0 \arrow[d] \\
  \rR^{i-1}\Gamma(\Sh(I)) \arrow[r] & \rR^{i-1}\Gamma(\Sh(N)) \arrow[r] & \rR^{i}\Gamma(\Sh(M)) \arrow[r] & \rR^i\Gamma(\Sh(I)) 
\end{tikzcd}
\]
where the rows are exact. The first two vertical arrows are isomorphisms (by induction), and the fourth vertical map is injective, so by the four lemma, the third vertical map, which is $F_{i, M}$ is also injective. This implies that the canonical map $\Sh^l\rR^i\Gamma(M) \to \rR^i\Gamma(\Sh^l(M))$ is also injective for all $l \in \bN$.

Let $\kappa_l(M) = \ker[\rR^i \Gamma(M) \to \Sh^l(\rR^i \Gamma(M))]$. We will now show that the modules $\Sh\kappa_l(M)$ and $\kappa_l(\Sh(M))$ are isomorphic for $l \gg 0$. For all $l$, we have a short exact sequence
\begin{displaymath}
 0 \to M \to \Sh^l(M) \to \De_l(M) \to 0.
\end{displaymath}
Therefore, we get the exact sequence
\begin{displaymath}
\rR^{i-1}\Gamma(\Sh^l(M)) \to \rR^{i-1}\Gamma(\De_l(M)) \to \rR^i \Gamma(M) \to \rR^i \Gamma(\Sh^l(M)).
    \end{displaymath}
{Let $I^{\bullet}$ and $J^{\bullet}$ be injective resolutions of $M$ and $\Sh^l(M)$ respectively}. The induced map $I^{\bullet} \to J^{\bullet}$ is easily seen to factor via $I^{\bullet} \to \Sh(I^{\bullet}) \to J^{\bullet}$. Therefore, the last map in the above exact sequence factors 
\begin{displaymath}
\rR^i \Gamma(M) \to \Sh^l \rR^i \Gamma(M) \to \rR^i \Gamma(\Sh^l(M)). 
\end{displaymath}
We have already shown that $\Sh^l \rR^i \Gamma(M) \to \rR^i\Gamma(\Sh^l(M))$ is injective, so we get that $\kappa_l(M)$ can also be written as $\ker[\rR^i \Gamma(M) \to \rR^i\Gamma (\Sh^l(M))]$ and from the exact sequence above, we also get 
\begin{displaymath}
\kappa_l(M) = \ker[\rR^i \Gamma(M) \to \rR^i\Gamma(\Sh^l(M))] \cong \coker[\rR^{i-1} \Gamma (\Sh^l(M)) \to \rR^{i-1} \Gamma(\De_l (M))].
\end{displaymath}
Therefore, we have isomorphisms
\begin{align*}
\Sh \kappa_l(M) & = \Sh \ker[\rR^i\Gamma(M) \to \rR^i\Gamma(\Sh^l(M))] \\
 &\cong \Sh \coker[\rR^{i-1} \Gamma (\Sh^l(M)) \to \rR^{i-1} \Gamma(\De_l (M))]\\
 &\cong \coker[\Sh \rR^{i-1} \Gamma (\Sh^l(M)) \to \Sh\rR^{i-1}\Gamma(\De_l(M))] &\mbox{($\Sh$ commutes with taking cokernels)}\\
 &\cong \coker[\rR^{i-1} \Gamma (\Sh(\Sh^l(M))) \to \rR^{i-1}\Gamma(\Sh(\De_l(M))) ]&\mbox{(induction hypothesis)}\\
 &\cong \coker[\rR^{i-1} \Gamma (\Sh^l(\Sh(M))) \to \rR^{i-1}\Gamma(\De_l(\Sh(M))) ]&\mbox{($\Sh$ commutes with $\Sh^l$ and $\De_l$)}\\
 &\cong \ker[\rR^i\Gamma(\Sh(M)) \to \rR^i\Gamma(\Sh^l(\Sh(M)))] \\
 &= \kappa_l(\Sh(M))
\end{align*}
proving our claim that $\Sh\kappa_l(M) \cong \kappa_l(\Sh(M))$ for $l \gg 0$. 

Now, since $\rR^i \Gamma(M)$ is a torsion module, we have $\rR^i\Gamma(M) = \colim_{\substack{l \in \bN}} \kappa_l(M)$, using which we get
\begin{displaymath}
\Sh \rR^i \Gamma(M) = \Sh (\colim \kappa_l(M)) \cong \colim \Sh(\kappa_l(M)) \cong \colim \kappa_l(\Sh(M)) \cong \rR^i \Gamma(\Sh(M)).
\end{displaymath}
For the second isomorphism, we have used the fact that $\Sh$ commutes with colimits and for the third isomorphism, that the functor $\Sh$ commutes with $\kappa_l$ for sufficiently large $l$, which we proved in the previous paragraph. Therefore, the natural map $F_{i, M}$ is an isomorphism, as required.
\end{proof}

\begin{remark}
To prove Proposition~\ref{prop:shcommutesderivedgamma}, we have only used the fact that $\Mod_R$ is locally noetherian, that the Serre subcategory $\Mod_R^{\tors}$ satisfies Property (Inj), that a torsion $R$-module $M$ can be realized as the colimit of the kernel of the maps $M \to \Sh^l(M)$, the functors $\Gamma$ and $\Sh$ commute with taking filtered colimits, and that the functor $\Sh$ commutes with $\Gamma$ and $\De_l$.
\end{remark}

\begin{corollary}\label{cor:splitshisdersat}
    Let $M$ be a torsion-free $R$-module. If the natural map $M \to \Sh^l(M)$ splits for all $l$, then $\rR^i\Gamma(M) = 0$ for all $i \ge 0$.
\end{corollary}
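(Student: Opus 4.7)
The plan is to apply $\rR^i\Gamma$ to the split map $M \to \Sh^l(M)$, use Proposition~\ref{prop:shcommutesderivedgamma} to move the shift outside the local cohomology, and finally invoke the fact that a torsion module is detected by the kernels of its iterated shift maps.

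The case $i = 0$ is immediate: $\rR^0\Gamma(M) = \Gamma(M) = 0$ since $M$ is torsion-free. For $i \ge 1$, set $T = \rR^i\Gamma(M)$. First I would observe that $T$ is a torsion $R$-module: it arises as the $i$-th cohomology of $\Gamma$ applied to an injective resolution of $M$, and $\Mod_R^{\tors}$ is a Serre subcategory closed under taking cohomology. Applying the functor $\rR^i\Gamma$ to the split monomorphism $M \to \Sh^l(M)$ then yields a split, and hence injective, map $T \to \rR^i\Gamma(\Sh^l(M))$. Iterating Proposition~\ref{prop:shcommutesderivedgamma} identifies the target naturally with $\Sh^l(T)$, and tracing through the construction of $F_{i,M}$ in the paragraph preceding that proposition shows that under this identification the map in question coincides with the iterated canonical map $T \to \Sh^l(T)$. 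Therefore $T \to \Sh^l(T)$ is injective for every $l$.

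To conclude, I would use the fact (already invoked in the proof of Proposition~\ref{prop:shcommutesderivedgamma}) that for any torsion $R$-module $T$ one has $T = \colim_l \ker(T \to \Sh^l(T))$. Since every one of these maps is injective, each kernel vanishes, forcing $T = 0$, as desired.

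The only real piece of bookkeeping is verifying that $\rR^i\Gamma$ applied to $i_M \colon M \to \Sh(M)$ agrees with $i_T \colon T \to \Sh(T)$ under $F_{i,M}$. This is essentially tautological from the construction of $F_{i,M}$: given injective resolutions $M \to I^\bullet$ and $\Sh(M) \to J^\bullet$, a lift of $i_M$ to $I^\bullet \to J^\bullet$ can be chosen to factor as $I^\bullet \xrightarrow{i_{I^\bullet}} \Sh(I^\bullet) \to J^\bullet$, where the second map is the lift of $\id_{\Sh(M)}$ that defines $F_{i,M}$. Beyond this check, I do not expect any serious obstacle.
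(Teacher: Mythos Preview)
Your proposal is correct and is essentially the argument the paper has in mind: the paper's proof simply says that the proof of \cite[Proposition~4.21]{nag19vi} applies, and what you have written is precisely that argument transported to the present setting. The factorization $I^\bullet \xrightarrow{i_{I^\bullet}} \Sh(I^\bullet) \to J^\bullet$ you invoke to identify $\rR^i\Gamma(i_M)$ with $i_T$ (up to $F_{i,M}$) is exactly the ingredient already used inside the proof of Proposition~\ref{prop:shcommutesderivedgamma}, so no new work is required there.
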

\begin{proof}
The proof of \cite[Proposition~4.21]{nag19vi} applies.
\end{proof}

\begin{proposition}\label{prop:semiindiffdersat}
    A finitely generated $R$-module $M$ is semi-induced if and only if $\rR\Gamma(M) = 0$. 
\end{proposition}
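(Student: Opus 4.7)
The plan is to prove both directions separately, with Corollary~\ref{cor:splitshisdersat} and Proposition~\ref{prop:liftingsemi} doing the heavy lifting.

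For the forward direction, I would first establish the claim for induced modules $R \otimes V$. By Lemma~\ref{lem:shiftofinduced}, we have $\Sh(R\otimes V) \cong (R \otimes V) \oplus (R \otimes \Sh(V))$, with the natural map $i_{R\otimes V}$ being inclusion of the first summand. Iterating, one checks by a direct induction on $l$ (using that $\Sh$ distributes over direct sums) that the map $R\otimes V \to \Sh^l(R\otimes V)$ is a split monomorphism for every $l$. Since $R \otimes V$ is torsion-free, Corollary~\ref{cor:splitshisdersat} then yields $\rR\Gamma(R\otimes V) = 0$. For a general finitely generated semi-induced $M$, I would induct on the length of a semi-induced filtration $0 = M_0 \subset M_1 \subset \cdots \subset M_n = M$ with $M_i/M_{i-1}$ induced: the long exact sequence of local cohomology attached to $0 \to M_{i-1} \to M_i \to M_i/M_{i-1} \to 0$ shows $\rR\Gamma(M_i) = 0$ given $\rR\Gamma(M_{i-1}) = 0$ and the induced-case vanishing.

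For the reverse direction, suppose $\rR\Gamma(M) = 0$. Since $\Gamma(M) = \rR^0\Gamma(M) = 0$, the module $M$ is torsion-free. I would proceed by induction on $t_0(M)$. When $t_0(M) = 0$, Proposition~\ref{prop:basicshift}(e) gives $t_0(\De(M)) \le -1$, so $\De(M) = 0$, and Proposition~\ref{prop:dmzeroisflat} shows $M$ is semi-induced. For the inductive step with $t_0(M) = n > 0$, consider the short exact sequence
\begin{displaymath}
0 \to M \to \Sh(M) \to \De(M) \to 0,
\end{displaymath}
valid because $M$ is torsion-free (Proposition~\ref{prop:basicshift}(b)). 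By Proposition~\ref{prop:shcommutesderivedgamma}, $\rR\Gamma(\Sh(M)) \cong \Sh(\rR\Gamma(M)) = 0$, and then the long exact sequence of local cohomology forces $\rR\Gamma(\De(M)) = 0$. Since $\De(M)$ is finitely generated (Proposition~\ref{prop:basicshift}(g)) and has $t_0(\De(M)) \le n-1$, the induction hypothesis gives that $\De(M)$ is semi-induced. Proposition~\ref{prop:liftingsemi} then upgrades this to conclude that $M$ itself is semi-induced.

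No step looks truly difficult given the machinery already developed; the only mildly delicate point is the split mono claim for iterated shifts of induced modules in the forward direction, and this is a routine consequence of Lemma~\ref{lem:shiftofinduced}. The rest is assembly of the long exact sequences and invocation of Propositions~\ref{prop:dmzeroisflat}, \ref{prop:liftingsemi}, and \ref{prop:shcommutesderivedgamma}.
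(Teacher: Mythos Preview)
Your proof is correct. The forward direction is identical to the paper's: show induced modules split inside their shifts via Lemma~\ref{lem:shiftofinduced}, invoke Corollary~\ref{cor:splitshisdersat}, then d\'evissage.

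For the reverse direction you take a genuinely different route. The paper applies the Shift Theorem to choose $l$ with $\Sh^l(M)$ semi-induced, uses the forward direction to get $\rR\Gamma(\Sh^l(M))=0$, deduces $\rR\Gamma(\De_l(M))=0$ from the long exact sequence, and then concludes $M$ is semi-induced from the two-out-of-three property (Corollary~\ref{cor:semises}) applied to $0\to M\to \Sh^l(M)\to \De_l(M)\to 0$. You instead use a \emph{single} shift: Proposition~\ref{prop:shcommutesderivedgamma} gives $\rR\Gamma(\Sh(M))\cong \Sh(\rR\Gamma(M))=0$ directly, and after the induction you finish with Proposition~\ref{prop:liftingsemi} rather than Corollary~\ref{cor:semises}. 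Your argument therefore avoids invoking the Shift Theorem in the reverse direction altogether, trading it for the commutation result~\ref{prop:shcommutesderivedgamma}; the paper's version is more self-contained within the shift-theorem circle of ideas but leans on the (stronger) structural result. Either way the induction runs on $t_0$ and the base case via Proposition~\ref{prop:dmzeroisflat} is the same.
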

\begin{proof}
Assume that $M$ is an induced module. Then, the natural map $M \to \Sh^l(M)$ splits (Lemma~\ref{lem:shiftofinduced}). Therefore, by Corollary~\ref{cor:splitshisdersat}, all the local cohomology of $M$ vanishes. By d\'evissage, the functor $\rR\Gamma$ vanishes on semi-induced modules as well.

Now, assume $\rR\Gamma(M) = 0$. In particular, the $R$-module $M$ is torsion-free as $\Gamma(M)=0$. We will show by induction on $t_0(M)$ that $M$ is semi-induced. When $t_0(M) = 0$, we have $\De(M) = 0$ and therefore $M$ is semi-induced by Proposition~\ref{prop:dmzeroisflat}. Now, assume $t_0(M) > 0$. By the shift theorem, we have a short exact sequence
\begin{displaymath}
0 \to M \to \Sh^l(M) \to \De_l(M) \to 0
\end{displaymath}
with $\Sh^l(M)$ being semi-induced. Therefore $\rR\Gamma(\Sh^l(M))=0$ by the previous paragraph. So from the long exact sequence of local cohomology, we also have $\rR\Gamma(\De_l(M)) = 0$. By Proposition~\ref{prop:basicshift}{(e)}, we see that $t_0(\De_l(M)) < t_0(M)$ and so {$\De_l(N)$} is semi-induced by the induction hypothesis. 
Now, by Corollary~\ref{cor:semises}, we have that $M$ is semi-induced, as required.
\end{proof}

\begin{defn}
We let $\bS \colon \Mod_R \to \Mod_R$ be the \textit{saturation functor} (where $\bS = S \circ T$). The saturation functor $\bS$ is a left exact functor. We let $\eta \colon \id \to \bS$ denote the unit of adjunction. An $R$-module $M$ is \textit{saturated} if the natural map $M \to \bS(M)$ is an isomorphism. An $R$-module $M$ is \textit{derived saturated} if it is saturated and $\rR^i\bS(M) = 0$ for all $i$. 
\end{defn}
The next result holds whenever the Serre subcategory satisfies Property (Inj); see \cite[Proposition~4.6]{ss19gl2}) for details.
\begin{lemma}\label{lem:dersatifflcvan}
An $R$-module $M$ is derived saturated if and only if $\rR\Gamma(M)=0$. 
\end{lemma}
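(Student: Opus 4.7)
The plan is to establish a natural distinguished triangle
\begin{displaymath}
\rR\Gamma(M) \to M \xrightarrow{\eta_M} \rR\bS(M) \to \rR\Gamma(M)[1]
\end{displaymath}
in $\rD(\Mod_R)$, from which the lemma follows at once. If $\rR\Gamma(M) = 0$, the triangle forces $\eta_M$ to be a quasi-isomorphism, which unpacks to the statement that $H^0(\eta_M)\colon M \to \bS(M)$ is an isomorphism and $\rR^i\bS(M)=0$ for all $i>0$, i.e., $M$ is derived saturated. Conversely, if $M$ is derived saturated, $\eta_M$ is already a quasi-isomorphism and the triangle yields $\rR\Gamma(M)=0$.

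To build the triangle, take an injective resolution $M \to I^\bullet$ in $\Mod_R$. The crux is to show that for any injective $I \in \Mod_R$ there is a canonical split short exact sequence
\begin{displaymath}
0 \to \Gamma(I) \to I \to \bS(I) \to 0
\end{displaymath}
in which $\Gamma(I)$ is injective in $\Mod_R^{\tors}$ and $\bS(I)$ is injective and saturated. Since $\Gamma$ is right adjoint to the exact inclusion $\Mod_R^{\tors} \hookrightarrow \Mod_R$, the object $\Gamma(I)$ is injective in $\Mod_R^{\tors}$; by property (Inj) it remains injective in $\Mod_R$, so the monomorphism $\Gamma(I) \hookrightarrow I$ splits. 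The complementary summand $J = I/\Gamma(I)$ is injective and torsion-free, and a short check (the unit $J \to \bS(J)$ has torsion kernel and cokernel, both of which must vanish since $J$ is a torsion-free injective) identifies $J$ with $\bS(I)$.

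Applying this termwise to $I^\bullet$ yields a short exact sequence of complexes
\begin{displaymath}
0 \to \Gamma(I^\bullet) \to I^\bullet \to \bS(I^\bullet) \to 0.
\end{displaymath}
The first complex computes $\rR\Gamma(M)$ by definition. For the third, one verifies (again via property (Inj)) that $T$ preserves injectives, so $T(I^\bullet)$ is an injective resolution of $T(M)$ in $\Mod_R^{\gen}$, and hence $\bS(I^\bullet) = S(T(I^\bullet))$ computes $\rR\bS(M) = \rR S(T(M))$. The long exact sequence of cohomology then assembles into the promised distinguished triangle, completing the proof.

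The main obstacle is the careful use of property (Inj) in the two places above: splitting the inclusion $\Gamma(I) \hookrightarrow I$, and showing that $T$ preserves injectives so that $\bS(I^\bullet)$ really is a $\bS$-acyclic resolution. Both claims can fail without property (Inj), which is precisely why the hypothesis is needed; given them, everything else is formal homological algebra, and indeed this mirrors the setup in \cite[Section~4]{ss19gl2}, which already treats the abstract situation we need.
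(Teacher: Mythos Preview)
Your proof is correct and follows essentially the same approach as the paper. The paper does not give an independent argument but simply cites \cite[Proposition~4.6]{ss19gl2}, noting that the statement holds whenever the Serre subcategory satisfies Property~(Inj); your argument is a faithful unpacking of that general result, building the triangle $\rR\Gamma(M)\to M\to \rR\bS(M)\to$ from an injective resolution via the splitting $I\cong \Gamma(I)\oplus \bS(I)$ guaranteed by Property~(Inj).
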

\begin{corollary}
An $R$-module is derived saturated if and only if it is semi-induced.
\end{corollary}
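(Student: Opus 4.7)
The plan is to obtain this as an immediate consequence of chaining together the two equivalences that have just been established in this subsection. We have exactly the two ingredients needed: Lemma~\ref{lem:dersatifflcvan} characterizes derived saturation purely in terms of the vanishing of $\rR\Gamma$, while Proposition~\ref{prop:semiindiffdersat} characterizes the semi-induced property (for finitely generated modules) by the same vanishing condition.

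Concretely, I would argue as follows. First, by Lemma~\ref{lem:dersatifflcvan}, a (finitely generated) $R$-module $M$ is derived saturated if and only if $\rR\Gamma(M) = 0$. Second, by Proposition~\ref{prop:semiindiffdersat}, a finitely generated $R$-module $M$ is semi-induced if and only if $\rR\Gamma(M) = 0$. Combining these two equivalences, the two properties ``derived saturated'' and ``semi-induced'' are equivalent, giving exactly the statement of the corollary.

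Since the proof is just a one-line concatenation of previously stated equivalences, there is no genuine technical obstacle; the real content lies in Proposition~\ref{prop:semiindiffdersat} (which in turn rests on the shift theorem and the classification of modules with $\De(M) = 0$) and in the verification of Property (Inj) for $\Mod_R^{\tors}$ (Lemma~\ref{lemma:propinj}) that underlies Lemma~\ref{lem:dersatifflcvan}. The only point worth flagging is the implicit finite-generation hypothesis: Proposition~\ref{prop:semiindiffdersat} is stated for finitely generated $M$, so this corollary should be read in the same setting (which is consistent with the rest of Section~\ref{s:fgrmod}).
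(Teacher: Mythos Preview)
Your proposal is correct and matches the paper's own proof exactly: the paper also simply says ``This follows by combining Lemma~\ref{lem:dersatifflcvan} and Proposition~\ref{prop:semiindiffdersat}.'' Your remark about the implicit finite-generation hypothesis is apt, since Proposition~\ref{prop:semiindiffdersat} is indeed stated only for finitely generated modules.
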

\begin{proof}
This follows by combining Lemma~\ref{lem:dersatifflcvan} and Proposition~\ref{prop:semiindiffdersat}.
\end{proof}

In characteristic zero, Sam--Snowden proved that $\Mod_R^{\tors}$ is equivalent to $\Mod_R^{\gen}$. Recently, Snowden \cite{sno21stable} extended this result to \textit{integral} $\GL$-algebras in characteristic zero, i.e., for a $\GL$-algebra $A$ with $|A|$ an integral domain, Snowden proved that the generic category $\Mod_A^{\gen} = \Mod_A/\Mod_A^{\tors}$ is equivalent to $\Mod_A^{\lf}$, the category of locally finite length $A$-modules. We show that this result fails for $R$ in positive characteristic. 
\begin{proposition}\label{prop:gentorinequiv}
    The categories $\Mod_R^{\gen}$ and $\Mod_R$ are not equivalent abelian categories in positive characteristic.
\end{proposition}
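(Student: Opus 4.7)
The plan is to exhibit a projective simple object in $\Mod_R^{\gen}$ and to show that no simple object of $\Mod_R^{\tors}$ is projective. Since the property of having a projective simple object is preserved under any equivalence of abelian categories, this would establish the desired inequivalence.

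First, I will argue that $T(R)$ is a projective simple object in $\Mod_R^{\gen}$. Simplicity is immediate: the only nonzero proper $\GL$-stable $R$-submodules of $R$ are the powers $\fm^n$, and each quotient $R/\fm^n$ is torsion of finite length (by Lemma~\ref{lem:torisfinlen}), so $R/\fm^n$ vanishes in $\Mod_R^{\gen}$. For projectivity, given any short exact sequence $0 \to X \to Y \to T(R) \to 0$ in $\Mod_R^{\gen}$, I would apply the section functor $S$ and take the induced long exact sequence in $\Mod_R$. Using that $R$ is semi-induced and hence derived saturated (Proposition~\ref{prop:semiindiffdersat}), together with the fact that $SX$ is derived saturated for every $X$ in $\Mod_R^{\gen}$ (a formal consequence of Property (Inj), established in Lemma~\ref{lemma:propinj} and exploited as in \cite[Section~4]{ss19gl2}), the higher derived functors of $S$ contribute nothing, producing a short exact sequence $0 \to SX \to SY \to R \to 0$ in $\Mod_R$. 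Since $R$ is projective in $\Mod_R$, this sequence splits, and applying the exact functor $T$ gives a splitting in $\Mod_R^{\gen}$.

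Next, I would show that no simple object $L_\lambda$ in $\Mod_R^{\tors}$ is projective by constructing, for each partition $\lambda$, a non-split extension $0 \to L_\mu \to M \to L_\lambda \to 0$ in $\Mod_R^{\tors}$. The key step is to pick a simple quotient $L_\mu$ of the nonzero finite-length polynomial representation $\bV \otimes L_\lambda$ (such a quotient exists because any nonzero finite-length object of $\Rep^{\pol}(\GL)$ has a nonzero semisimple head). I then set $M = L_\lambda \oplus L_\mu$ as a graded $\GL$-representation, with $L_\lambda$ placed in degree $|\lambda|$ and $L_\mu$ in degree $|\lambda|+1$, and define the $R$-action by declaring that $\bV \subset R_1$ acts on $L_\lambda$ via the chosen surjection $\bV \otimes L_\lambda \twoheadrightarrow L_\mu$ (landing in the $L_\mu$ summand) and that all other $R$-multiplications vanish. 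Since $\fm^2 M = 0$, associativity is automatic, so $M$ is a genuine torsion $R$-module, and the extension is non-split because the $R$-action does not preserve the summand $L_\lambda$. The case $\lambda = \emptyset$ recovers the familiar extension $0 \to \bV \to R/\fm^2 \to k \to 0$.

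The main obstacle is the verification in the first step that $SX$ is derived saturated for every $X$ in $\Mod_R^{\gen}$. This is a general property of Serre quotients satisfying Property (Inj): the adjunction $T \dashv S$ together with $T \circ i = 0$ (where $i \colon \Mod_R^{\tors} \hookrightarrow \Mod_R$ is the inclusion) yields $\Ext^i_{\Mod_R}(iN, SX) = 0$ for all torsion $N$ and all $i \ge 0$, which forces $\rR\Gamma(SX) = 0$ via the formula $\rR^i\Gamma = \colim_n \Ext^i(R/\fm^n, -)$. By Lemma~\ref{lem:dersatifflcvan}, this is exactly the derived saturation of $SX$. A secondary point is ensuring the surjection $\bV \otimes L_\lambda \twoheadrightarrow L_\mu$ exists in positive characteristic, which reduces to the observation that the finite-length polynomial representation $\bV \otimes L_\lambda$ has a nonzero head.
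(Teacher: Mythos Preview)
Your approach has a genuine gap: the claim that $T(R)$ is projective in $\Mod_R^{\gen}$ is false, and the argument you give for it breaks at the step ``$SX$ is derived saturated for every $X$ in $\Mod_R^{\gen}$.'' That assertion is equivalent to saying $\rR^iS(X)=0$ for all $i>0$ and all $X$, i.e.\ that the section functor $S$ is exact---a condition that does \emph{not} follow from Property~(Inj). Your Ext argument misfires: the adjunction $T\dashv S$ with $T$ exact yields, after deriving, $\rR\Hom_{\Mod_R}(iN,\rR S(X))=0$ for torsion $N$, not $\rR\Hom_{\Mod_R}(iN,SX)=0$. The two agree only when $\rR^{>0}S(X)$ already vanishes, which is exactly what is in question. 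A structural red flag is that nothing in your projectivity argument invokes positive characteristic; since in characteristic zero $\Mod_R^{\gen}\simeq\Mod_R^{\tors}$ (where you yourself show no simple is projective), $T(R)$ cannot be projective there, and your characteristic-free argument would prove that it is.

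The paper's proof takes a completely different invariant: injective dimension rather than projectivity. It observes that every finitely generated object of $\Mod_R^{\tors}$ has finite injective dimension, whereas $T(R)$ has infinite injective dimension in $\Mod_R^{\gen}$. The latter uses a genuinely characteristic-$p$ input---Totaro's result that the injective dimension of $\lw^i(\bV)$ in $\Rep^{\pol}(\GL)$ is unbounded in $i$---together with the fact that $R$ is derived saturated, so an injective resolution of $T(R)$ transports under $S$ to one of $R$. This is where the positive-characteristic hypothesis actually enters, and it is what your argument is missing.
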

\begin{proof}
In $\Mod_R^{\tors}$, it is easy to see that every finitely generated object has finite injective dimension. We will show that $T(R)$ has infinite injective dimension in $\Mod_R^{\gen}$. Firstly, the injective dimension of $\lw^i(\bV)$ in $\Rep^\pol(\GL)$ is unbounded as $i$ varies (see \cite[Page~6]{tot97gl}). So $R$ has infinite injective dimension in $\Mod_R$ as injective objects in $\Mod_R$ are injective in $\Rep^\pol(\GL)$. Now, since $R$ is {derived saturated}, we may take an injective resolution of $T(R)$, and apply $S$ to obtain an injective resolution of $R$. Therefore the injective dimension of $T(R)$ is at least the injective dimension of $R$, and so infinite as well. This implies that the two categories are not equivalent.
\end{proof}

\begin{theorem}\label{thm:finiteext}
Let $M, N$ be two finitely generated $R$-modules. The module $\ext^i_R(M, N)$ is finite length for all $i$, and vanishes for $i \gg 0$.
\end{theorem}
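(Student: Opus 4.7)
My plan is to reduce to pairs of generators of $\rD^b_{\fgen}(\Mod_R)$. By Theorem~\ref{thm:gendbmodr}, this derived category is generated as a triangulated category by the simples $L_\lambda$ and the induced modules $R \otimes L_\lambda$. For fixed $M$, the full subcategory of $N \in \rD^b_{\fgen}(\Mod_R)$ with $\rR\Hom_R(M, N) \in \rD^b_{\fgen}(\Mod_R)$ is thick, since $\rR\Hom_R(M, -)$ is triangulated and the target is closed under triangles and direct summands; symmetrically in $M$. It therefore suffices to verify the four combinations $(L_\lambda, L_\mu)$, $(L_\lambda, R \otimes L_\mu)$, $(R \otimes L_\lambda, L_\mu)$, and $(R \otimes L_\lambda, R \otimes L_\mu)$.

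The pair $(L_\lambda, R \otimes L_\mu)$ vanishes outright: the target is derived saturated by Proposition~\ref{prop:semiindiffdersat}, and the adjunction $\rR\Hom_R(X, \bS(Y)) \cong \rR\Hom_{\Mod_R^{\gen}}(T(X), T(Y))$ kills the Hom because $T(L_\lambda) = 0$. For the two cases $M = R \otimes L_\lambda$, I would use that $R \otimes - \colon \Rep^\pol(\GL) \to \Mod_R$ is exact and carries the projective polynomial representations $\Div^\nu(\bV)$ to projectives in $\Mod_R$; any projective resolution of $L_\lambda$ in $\Rep^\pol(\GL)$ therefore yields a projective resolution of $R \otimes L_\lambda$ in $\Mod_R$, promoting the adjunction to
\[
\ext^i_R(R \otimes L_\lambda, N) \cong \ext^i_{\Rep^\pol(\GL)}(L_\lambda, N).
\]
By the degree decomposition of $\Rep^\pol(\GL)$, only the homogeneous component $N_{|\lambda|}$ contributes, and this is a finite length polynomial representation when $N$ is finitely generated.

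The remaining case $(L_\lambda, L_\mu)$ is handled via the change-of-rings spectral sequence associated with the augmentation $R \twoheadrightarrow k$:
\[
E_2^{p, q} = \ext^p_{\Rep^\pol(\GL)}(\Tor_q^R(L_\lambda, k), L_\mu) \Rightarrow \ext^{p+q}_R(L_\lambda, L_\mu).
\]
Since $\fm$ annihilates $L_\lambda$, the Koszul projective resolution $R \otimes \Div^\bullet(\bV) \twoheadrightarrow k$ has vanishing induced differentials after tensoring with $L_\lambda$, yielding $\Tor_q^R(L_\lambda, k) \cong L_\lambda \otimes \Div^q(\bV)$; degree matching with $L_\mu$ then pins down $q = |\mu| - |\lambda|$. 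The main input throughout, and the principal obstacle to making the argument precise, is the classical fact that each homogeneous subcategory $\Rep^\pol(\GL)_d$ has finite global dimension (via equivalence with modules over the quasi-hereditary Schur algebra $S(n, d)$ for $n \ge d$); this simultaneously delivers the finite-length-ness of each Ext (as a finite-dimensional $k$-vector space, hence finite length as an $R$-module whose action factors through a finite-codimension quotient) and the vanishing in high cohomological degree.
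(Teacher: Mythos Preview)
Your proposal is correct and follows the paper's strategy: reduce to the generators $L_\lambda$ and $R\otimes L_\lambda$ via Theorem~\ref{thm:gendbmodr} and check the four combinations. Three are handled essentially as in the paper---derived saturation for $(L_\lambda, R\otimes L_\mu)$, and your adjunction $\ext^i_R(R\otimes L_\lambda, N)\cong\ext^i_{\Rep^\pol(\GL)}(L_\lambda, N)$ is exactly the paper's ``lift a finite projective resolution of $L_\lambda$ along $R\otimes-$''. The one divergence is $(L_\lambda, L_\mu)$: the paper asserts that $L_\mu$ admits a finite injective resolution in $\Mod_R$ by finite-length injectives and reads off the result for any finitely generated $M$, while you instead run the change-of-rings spectral sequence for the augmentation $R\to k$. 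Both routes rest on the finite global dimension of $\Rep^\pol(\GL)_d$; yours trades the (not further justified in the paper) existence of such torsion injectives for a short Koszul computation. One minor remark: $\ext^i_R(M,N)$ is just a $k$-vector space here, so ``finite length'' simply means finite-dimensional and your parenthetical about an $R$-action is superfluous.
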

\begin{proof}
By d\'evissage, it suffices to show that the conclusions of the theorem hold when $M$ and $N$ are the generators of the derived category given in Theorem~\ref{thm:gendbmodr}. First, assume $N$ is the simple $R$-module $L_{\lambda}$. The $R$-module $L_{\lambda}$ has a finite injective resolution by finite length injectives. So $\ext^i_R(M, L_{\lambda})$ vanishes for sufficiently large $i$ and is finite length for all $i$ (since $M$ is finitely generated). 

Next, assume $N$ is the induced $R$-module $R \otimes L_{\lambda}$ and $M$ is the simple $R$-module $L_{\mu}$. The module $N$ is derived saturated (Proposition~\ref{prop:semiindiffdersat}), so $\ext^i_R(L_{\mu}, R \otimes L_{\lambda})$ vanishes for all $i$ (\cite[Proposition~4.7]{ss19gl2}). Finally, assume $M = R \otimes L_{\mu}$ and and $N = R\otimes L_{\lambda}$ are both induced modules. The $\GL$-representation $L_{\mu}$ has a finite projective resolution $P_{\bullet}$ in $\Rep^{\pol}(\GL)$, see e.g., \cite[Theorem~1]{tot97gl}. The complex $R \otimes P_{\bullet}$ is a finite projective resolution of $R \otimes L_{\lambda}$. Therefore, the conclusion of the theorem holds in this case as well.
 \end{proof}

\subsection{Semi-orthogonal decomposition} \label{ss:semiorth}
Using Property (Inj), we obtain a semi-orthogonal decomposition
\begin{displaymath}
\rD^{+}(\Mod_R) = \langle \rD^{+}(\Mod_R^{\tors}), \rD^{+}(\Mod_R^{\gen}) \rangle
\end{displaymath}
of the derived category $\rD^{+}(\Mod_R)$ (see below for definitions and \cite[Proposition~4.9]{ss19gl2} for details). In this section, we prove the analogous result for $\rD^b_{\fgen}(\Mod_R)$ (Theorem~\ref{thm:semiorth}). We emphasize that for $\rD^b_{\fgen}(\Mod_R)$, the semi-orthogonal decomposition is not a formal consequence of Property (Inj). It will follow from our next theorem on the finiteness of local cohomology, which we prove using the results of the previous two sections.
\begin{theorem}[Finiteness of local cohomology]\label{thm:finitelc}
For a finitely generated $R$-module $M$, the $R$-module $\rR^i\Gamma(M)$ is finite length for all $i$, and is zero for sufficiently large $i$.
\end{theorem}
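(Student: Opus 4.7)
The plan is to combine the distinguished triangle from Proposition~\ref{prop:triangle} with the derived-saturation criterion of Proposition~\ref{prop:semiindiffdersat}. Given a finitely generated $R$-module $M$, Proposition~\ref{prop:triangle} furnishes a distinguished triangle
\[
T \to M \to F \to
\]
in $\rD^b_{\fgen}(\Mod_R)$, where $T$ is quasi-isomorphic to a bounded complex $T^\bullet$ of finite length torsion $R$-modules, and $F$ is quasi-isomorphic to a bounded complex $F^\bullet$ of finitely generated semi-induced $R$-modules. Applying the left exact functor $\rR\Gamma$ then yields a distinguished triangle
\[
\rR\Gamma(T) \to \rR\Gamma(M) \to \rR\Gamma(F) \to
\]
in $\rD^+(\Mod_R)$, and I will extract the theorem from the outer two terms.

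For the right-hand term, I would use $F^\bullet$ as an acyclic resolution. By Proposition~\ref{prop:semiindiffdersat}, every semi-induced $R$-module is derived saturated, so $\Gamma(F^i) = 0$ and $\rR^j\Gamma(F^i) = 0$ for all $j > 0$. The hypercohomology spectral sequence then collapses to give $\rR\Gamma(F) \cong \Gamma(F^\bullet) = 0$, so $\rR\Gamma(M) \cong \rR\Gamma(T)$ in the derived category. For the left-hand term, each $T^i$ is torsion, so $\Gamma(T^i) = T^i$ and, by Corollary~\ref{cor:lcvanishingtorsion}, $\rR^j\Gamma(T^i) = 0$ for $j > 0$; that is, $T^\bullet$ is again a $\Gamma$-acyclic resolution of itself and $\rR\Gamma(T) \cong T^\bullet$. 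Taking cohomology, $\rR^i\Gamma(M) \cong H^i(T^\bullet)$, which is a subquotient of the finite length module $T^i$ (hence finite length) and vanishes outside the bounded range of degrees in which $T^\bullet$ is supported.

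The main obstacle is not really an obstacle: once the machinery of Proposition~\ref{prop:triangle} and Proposition~\ref{prop:semiindiffdersat} is available, the theorem is a formal hypercohomology computation. The substantive inputs (the shift theorem, which powers Proposition~\ref{prop:triangle}, and Property (Inj), which powers Corollary~\ref{cor:lcvanishingtorsion}) have already been established in earlier sections, so the remaining work is only the standard verification that a bounded complex of $\Gamma$-acyclic objects computes $\rR\Gamma$ in a Grothendieck abelian category.
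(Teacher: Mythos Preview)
Your proposal is correct and follows essentially the same route as the paper: both arguments reduce to the vanishing of $\rR\Gamma$ on semi-induced modules (Proposition~\ref{prop:semiindiffdersat}) and the $\Gamma$-acyclicity of torsion modules (Corollary~\ref{cor:lcvanishingtorsion}). The paper phrases this via the generators of $\rD^b_{\fgen}(\Mod_R)$ from Theorem~\ref{thm:gendbmodr}, while you work directly with the distinguished triangle of Proposition~\ref{prop:triangle} and a hypercohomology spectral sequence, but the content is the same.
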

\begin{proof}
For torsion modules and semi-induced modules, the conclusions of the theorem hold by Corollary~\ref{cor:lcvanishingtorsion} and Proposition~\ref{prop:semiindiffdersat} respectively. Since the torsion and semi-induced modules generate $\rD^b_{\fgen}(\Mod_R)$, the result holds for all finitely generated $R$-modules.
\end{proof} 
We now recall the definition of a semi-orthogonal decomposition of a triangulated category. We refer the reader to \cite[Section~4]{ss19gl2} for a succinct reference.
\begin{defn}
A \textit{semi-orthogonal decomposition} of a a triangulated category $\cT$ is a sequence $\langle \cA_1, \cA_2, \ldots, \cA_n \rangle$ of full triangulated subcategories such that 
\begin{enumerate}
\item $\Hom_{\cT}(M_i, M_j) = 0$ for objects $M_i \in \cA_i, M_j \in \cA_j$ with $i < j$, and
\item the smallest triangulated subcategory containing $\cA_1, \cA_2, \ldots, \cA_n$ is $\cT$.
\end{enumerate}
\end{defn}
\begin{theorem}\label{thm:semiorth} We have a semi-orthogonal decomposition
\begin{displaymath}
\rD^b_{\fgen}(\Mod_R) = \langle \rD^b_{\fgen}(\Mod_{R}^{\tors}),\rD^b_{\fgen}(\Mod_{R}^{\gen}) \rangle.
\end{displaymath}
where $\rD^b_{\fgen}(\Mod_{R}^{\gen})$ is identified as a subcategory of $\rD^b_{\fgen}(\Mod_R)$ using the functor $\rR S$. 
\end{theorem}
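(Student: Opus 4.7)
The plan is to deduce the semi-orthogonal decomposition from the already-known decomposition
\[
\rD^{+}(\Mod_R) = \langle \rD^{+}(\Mod_R^{\tors}), \rD^{+}(\Mod_R^{\gen}) \rangle
\]
by restricting to objects with bounded, finitely generated cohomology, using Theorem~\ref{thm:finitelc} as the bridge. Recall that this $\rD^{+}$-level decomposition furnishes, for every $M$, a canonical distinguished triangle
\[
\rR\Gamma(M) \to M \to \rR\bS(M) \to \rR\Gamma(M)[1]
\]
with $\rR\Gamma(M) \in \rD^{+}(\Mod_R^{\tors})$ and $\rR\bS(M)$ in the essential image of $\rR S$. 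The first thing I would verify is that when $M \in \rD^b_{\fgen}(\Mod_R)$, both flanking terms of this triangle also lie in $\rD^b_{\fgen}$. For $\rR\Gamma(M)$ this is exactly Theorem~\ref{thm:finitelc}: each $\rR^i\Gamma(M)$ is finite length (hence finitely generated), and these cohomologies vanish for $i \gg 0$. Boundedness and finite generation of $\rR\bS(M)$ then follow from the triangle, since $\rD^b_{\fgen}$ is stable under cones inside $\rD^+(\Mod_R)$. Together with the standard observation that every object of $\rD^b_{\fgen}(\Mod_R^{\gen})$ admits a lift to $\rD^b_{\fgen}(\Mod_R)$ (built up by attaching cones starting from finitely generated lifts of its cohomology objects), this produces a well-defined functor $\rR S \colon \rD^b_{\fgen}(\Mod_R^{\gen}) \to \rD^b_{\fgen}(\Mod_R)$, full faithfulness of which is inherited from the $\rD^+$ level.

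Next, the two defining conditions of a semi-orthogonal decomposition can be checked directly. For Hom vanishing: for $X \in \rD^b_{\fgen}(\Mod_R^{\tors})$ and $Y \in \rD^b_{\fgen}(\Mod_R^{\gen})$, the $(T, \rR S)$-adjunction gives
\[
\Hom_{\rD^b_{\fgen}(\Mod_R)}(X, \rR S(Y)) \cong \Hom_{\rD^b_{\fgen}(\Mod_R^{\gen})}(T(X), Y) = 0,
\]
since $T(X) = 0$. For generation, the canonical triangle displayed above exhibits any $M \in \rD^b_{\fgen}(\Mod_R)$ as an extension of an object of $\rD^b_{\fgen}(\Mod_R^{\tors})$ by one in the image of $\rR S$, so the smallest triangulated subcategory containing the two pieces is all of $\rD^b_{\fgen}(\Mod_R)$.

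The main obstacle, and the essential ingredient not present in the purely formal $\rD^+$ story, is confirming that the triangle produced by the ambient semi-orthogonal decomposition truly restricts to $\rD^b_{\fgen}$; this is precisely where Theorem~\ref{thm:finitelc} is used, and it is what makes the bounded finitely-generated assertion non-formal. The secondary subtlety is the lifting claim used to represent objects of $\rD^b_{\fgen}(\Mod_R^{\gen})$ by objects of $\rD^b_{\fgen}(\Mod_R)$, but this is standard: one inductively attaches cones from finitely generated cohomological lifts, and the finite amplitude of $Y$ guarantees termination. With these two verifications in hand, the theorem follows by assembling the three bullets above.
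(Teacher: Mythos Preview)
Your proposal is correct and follows the same approach as the paper: both deduce the result from Theorem~\ref{thm:finitelc} via the general machinery of \cite[Proposition~4.15]{ss19gl2}, with the paper simply citing that reference while you spell out its content (restricting the $\rD^+$-level triangle to $\rD^b_{\fgen}$ using the finiteness of local cohomology, then verifying the two axioms). Your identification of Theorem~\ref{thm:finitelc} as the one non-formal input is exactly right.
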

\begin{proof}
This is a consequence Theorem~\ref{thm:finitelc} (see \cite[Proposition~4.15]{ss19gl2} for more details).
\end{proof}

We can now prove a generalization of Proposition~\ref{prop:triangle}.
\begin{proposition}\label{prop:triangledb}
    Given an object $M$ in $\rD^b_{\fgen}(\Mod_R)$, there exists a distinguished triangle $\rR\Gamma(M) \to M \to \rR\bS(M) \to$ in $\rD^b_{\fgen}(\Mod_R)$.
\end{proposition}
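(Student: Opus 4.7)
The plan is to reduce to the analogous statement in the unbounded derived category and then use the finiteness results already established to confine everything to $\rD^b_{\fgen}(\Mod_R)$. First, since $\Mod_R^{\tors}$ satisfies Property (Inj) by Lemma~\ref{lemma:propinj}, the standard machinery of Serre quotients (recorded in \cite[Proposition~4.9]{ss19gl2}) produces, for every $M \in \rD^{+}(\Mod_R)$, a functorial distinguished triangle $\rR\Gamma(M) \to M \to \rR\bS(M) \to$ in $\rD^{+}(\Mod_R)$. Concretely, this triangle is obtained by choosing an injective resolution $M \to I^\bullet$ and invoking Property (Inj) to get a short exact sequence of complexes $0 \to \Gamma(I^\bullet) \to I^\bullet \to \bS(I^\bullet) \to 0$, using that $\Gamma(I^\bullet)$ computes $\rR\Gamma(M)$ and $\bS(I^\bullet)$ computes $\rR\bS(M)$.

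Next, I would verify that the outer two terms of this triangle actually lie in $\rD^b_{\fgen}(\Mod_R)$ whenever $M$ does. For $\rR\Gamma(M)$, this is where the finiteness result Theorem~\ref{thm:finitelc} enters decisively: for each finitely generated cohomology module $H^i(M)$, the modules $\rR^j\Gamma(H^i(M))$ are finite length (in particular, finitely generated) and vanish for $j$ large. Via the hypercohomology spectral sequence (or by induction on the cohomological amplitude of $M$ using the truncation triangles), this forces $\rR^n\Gamma(M)$ to be finitely generated for every $n$ and to vanish for $|n| \gg 0$, so that $\rR\Gamma(M) \in \rD^b_{\fgen}(\Mod_R)$. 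The third vertex $\rR\bS(M)$ then lies in $\rD^b_{\fgen}(\Mod_R)$ automatically, being the cone on a morphism between two objects of the full triangulated subcategory $\rD^b_{\fgen}(\Mod_R) \subset \rD^{+}(\Mod_R)$.

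There is no real obstacle left: the conceptual work was already done in establishing Theorem~\ref{thm:finitelc} (whose proof in turn rested on the shift theorem and the generation result Theorem~\ref{thm:gendbmodr}). One could alternatively derive Proposition~\ref{prop:triangledb} almost tautologically from the semi-orthogonal decomposition of Theorem~\ref{thm:semiorth}, by noting that in any semi-orthogonal decomposition $\cT = \langle \cA, \cB \rangle$ every object sits in a canonical triangle with one term in $\cA$ and the other in $\cB$, and then identifying these terms with $\rR\Gamma(M)$ and $\rR\bS(M)$ using the adjunctions $i \dashv \Gamma$ and $T \dashv S$; however, going through the injective resolution description is slightly more transparent and makes the triangle visibly the derived version of the unit $M \to \bS(M)$.
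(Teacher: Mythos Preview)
Your argument is correct and matches the paper's approach: the paper simply cites \cite[Proposition~4.6]{ss19gl2} for the triangle in $\rD^{+}(\Mod_R)$ (your use of Property~(Inj) via Lemma~\ref{lemma:propinj} amounts to the same thing) and then invokes Theorem~\ref{thm:finitelc} to force both outer terms into $\rD^b_{\fgen}(\Mod_R)$. You have supplied more detail than the paper does, but the strategy is identical.
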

\begin{proof}
This follows by combining \cite[Proposition~4.6]{ss19gl2} with Theorem~\ref{thm:finitelc}.
\end{proof}

\bibliographystyle{plain}
\bibliography{ref}

\end{document}